\documentclass[12pt, english]{amsart}
\usepackage[english]{babel}
\usepackage[utf8]{inputenc}

\usepackage{graphicx,subfigure}
\usepackage{amstext}
\usepackage{amssymb}
\usepackage{amsmath}
\usepackage{amsthm}
\usepackage{stmaryrd}
\usepackage{fullpage}
\usepackage[all]{xy}
\xyoption{arc}
\CompileMatrices
\usepackage{MnSymbol}
\usepackage{dynkin-diagrams}

\usepackage[colorlinks=true,citecolor=blue]{hyperref}

\usepackage{tikz-cd}
\usetikzlibrary{backgrounds,fit, matrix}
\usetikzlibrary{positioning}
\usetikzlibrary{calc,through,chains}
\usetikzlibrary{arrows,shapes,decorations.pathmorphing,automata, petri}
\usepackage[boxsize=1.25em, centerboxes]{ytableau}

\usepackage[
    backend=biber,
    style=alphabetic,
  ]{biblatex}
\newtheorem{Theorem}{Theorem}
\newtheorem{Corollary}[Theorem]{Corollary}
\newtheorem{Lemma}[Theorem]{Lemma}
\newtheorem{Proposition}[Theorem]{Proposition}

\newtheorem{Exercise}[Theorem]{Exercise}

\theoremstyle{definition}
\newtheorem{Definition}[Theorem]{Definition}
\newtheorem{Example}[Theorem]{Example}

\newtheorem{Remark}[Theorem]{Remark}

\numberwithin{equation}{section}
\numberwithin{figure}{section}

\DeclareMathOperator{\Spec}{Spec}

\DeclareMathOperator{\Gr}{Gr}
\DeclareMathOperator{\Hom}{Hom}

\DeclareMathOperator{\Pic}{Pic}

\DeclareRobustCommand{\GL}{\mathrm{GL}}
\DeclareRobustCommand{\SL}{\mathrm{SL}}
\DeclareRobustCommand{\PGL}{\mathrm{PGL}}
\DeclareRobustCommand{\SO}{\mathrm{SO}}
\DeclareRobustCommand{\Sp}{\mathrm{Sp}}

\DeclareMathOperator{\PExp}{PExp}

\DeclareMathOperator{\PLog}{PLog}

\DeclareMathOperator{\Aut}{Aut}
\DeclareMathOperator{\End}{End}
\DeclareMathOperator{\tr}{tr}
\DeclareMathOperator{\Spin}{Spin}

\makeatletter
\@namedef{subjclassname@2020}{%
  \textup{2020} MSC}
\makeatother

\title{$e$-polynomials of character varieties}

\author{Alfonso Zamora}

\address{Departamento de Matemática Aplicada a las TIC, ETSI Informáticos, Universidad Politécnica de Madrid, Campus de Montegancedo, 28660 Madrid, Spain}
  \email{alfonso.zamora@upm.es}
\subjclass[2020]{Primary: 14M35. Secondary: 14L24, 14D20, 14C15, 16G99, 17B22, 32S35}
\keywords{Character varieties, e-polynomials, mixed hodge structures, topological mirror symmetry, Langlands duality, representation theory}

\makeatother

\begin{document}

\begin{abstract}
This manuscript contains the basic ideas and constructions about $e$-polynomials in character varieties and the state of the art of certain research in the field, plus some new further directions. 
We introduce mixed Hodge structures and $e$-polynomials, together with a series of arithmetic (counting points over finite fields) and geometric (stratification into parabolic types) techniques to compute them. We include a complete example of the calculation of the $e$-polynomial for the $\GL_3$-character variety of the free group. Finally, we extend the geometric stratification into parabolic types to a general reductive group $G$ to obtain explicit motivic expressions for the $G$-character varieties, and reduce certain topological mirror symmetry conjectures for these moduli spaces. 
\end{abstract}

\maketitle

\tableofcontents

\section*{Introduction}
%\addcontentsline{toc}{section}{Introduction}

Given a complex reductive algebraic group $G$, and  a finitely presented group $\Gamma$, the $G$-character variety is defined as the GIT quotient $\Hom(\Gamma, G)/\!\!/ G$. When $\Gamma$ is the fundamental group of a Riemann surface, this space has been studied quite well because it is homeomorphic to the space of $G$-Higgs bundles via the non-abelian Hodge correspondence \cite{simpson}. This spaces also play an important role in mathematical physics in the context of mirror symmetry \cite{Kapustin2006}: for the moduli spaces of the pair of Langlands dual groups $\SL_n$ and $\PGL_n$ (which are smooth/orbifold) it is conjectured that certain topological and geometrical invariants coincide or mirror in a certain way. Other groups $\Gamma$ provide very singular character varieties for which the topological and geometrical invariants are much more unexplored and difficult to compute.

Among the interesting invariants we can be interested in we first find Poincaré polynomials encoding Betti numbers, whose computation for surface groups started with Hitchin \cite{hitchin1987} and Gothen \cite{gothen1994} with recent major advances by Schiffmann \cite{schiffmann2016} or Mellit \cite{Mellit2020}. After Deligne's work \cite{deligne1971theorie, deligne1974theorie} on mixed Hodge structures for singular spaces, the computation of mixed Hodge numbers, and the polynomials encoding these such as the mixed Hodge polynomial or the $e$-polynomial, became interesting as a way to capture intermediate information between topology and complex geometry. 
This was initiated by Hausel and Rodriguez-Villegas in the surface case \cite{hausel2008mixed} by using arithmetic methods due to the fact that the number of points of the moduli space over finite fields is given by a polynomial which, in turn, coincides with the $e$-polynomial \cite[Katz's Appendix]{hausel2008mixed}. This line of thinking has been continued by other authors as in \cite{baraglia2017arithmetic} and has developed an important interaction between arithmetics and representation theory by counting indecomposable or irreducible representations to produce formulae yielding these polynomials \cite{reineke2003harder, reineke2006counting, mozgovoy2007computational, mozgovoy2009number, mozgovoy2015arithmetic}. One major achievement was to prove a topological mirror symmetry statement for $\SL_n$ and $\PGL_n$, showing that their $e$-polynomials are equal (up to passing to their stringy version accounting for orbifold de-singularizations) \cite{Hausel_Thaddeus_2003, hausel2008mixed, groechenig2020}. 

The seminal paper \cite{logares2013hodge} uses a geometric decomposition of the character variety into subvarieties of different stabilizers. The good properties of the $e$-polynomial under locally closed stratifications and certain fibrations allow to produce a polynomial for the whole moduli space by summing up the polynomials of the strata which, in turn, are computed by means of fibrations with the stabilizers as kernels. 
This has been shown to be successful for small rank character varieties for $\GL_n$, $\SL_n$ and $\PGL_n$ as in \cite{FlorentinoLawton09,  FlorentinoLawton14, lawton2016SL3, CasimiroFlorentinoLawtonOliveira16, martinez2016thesis}. For ranks higher or equal than $4$ computations seem to be intractable yet. 

Later, the papers \cite{florentino2021serre, florentino2023generating} combine the arithmetic and the geometric techniques in the groups $\GL_n$, $\SL_n$ and $\PGL_n$ to provide generating functions for the polynomial expressions based in plethystic expressions coming from high energy physics \cite{Feng2007}, combined with a good understanding of the representation theory of symmetric groups \cite{cheah1996cohomology}. These produce a stratification of the character variety into polystability types, where the big stratum corresponds to the irreducible representations and the smallest one are the abelian representations studied in \cite{florentino2021abelian, li2024SPSO}. When $\Gamma$ is the free group, the character varieties we obtain are very singular.  In \cite{florentino2021serre} actual explicit expressions for these character varieties are derived, based on those from \cite{mozgovoy2015arithmetic}, and the authors prove a $\SL_n-\PGL_n$ topological mirror symmetry statement. The paper \cite{gonzalez2024rootdata} explores a generalization of this to other groups $G$, producing a stratification into parabolic subgroups of $G$ and reducing topological mirror symmetry to checking it strata by strata. This uses the idea of core and pseudo quotient in \cite{gonzalez2024pseudo} which generalizes the notion of polystable representatives in a moduli space to each particular parabolic strata. 

The outline of the notes is the following. In Section 1 we cover the basics of Deligne's mixed Hodge structures and in Section 2 we define the invariants we want to compute, the $e$-polynomials, and explore its properties. Section 3 is devoted to an overview on the main techniques to compute $e$-polynomials. Section 4 contains a complete example of the computation for the $\GL_3$-character variety of the free group, using most of the ingredients previously introduced. Up to here, these notes contain several exercises towards a better acquisition of the concepts of the first 4 sections. Section 5 explores the generalization for complex reductive groups $G$ and section 6 contains actual computations and topological mirror symmetry statements and conjectures for simple groups of Dynkin type ABCD. 

\subsection*{Acknowledgements}
I would like to thank Ronald. A. Zúñiga-Rojas and Alexander Schmitt for inviting me to deliver a series of talks at the Workshop on Character Varieties and Higgs Bundles held in Liberia, Guanacaste, Costa Rica, in August 2025. I thank the University of Costa Rica sede Guanacaste for hospitality while delivering the course and writing this manuscript. I also thank Marwan Benyoussef, Ana Cristina Casimiro, Guillermo Gallego, Cesare Goretti, Shengshuan Liu, Allan Murillo, and the rest of students and participants of the workshop for their comments and suggestions on correcting and improving this work. Finally, I thank the referee for the attentive reading of the manuscript and for his/her comments. This work was partially supported by project PID2022-142024NB-I00 by the Spanish government.

\section{Mixed Hodge structures}

In this section we present the notion of mixed Hodge structure, introduced by Deligne in \cite{deligne1971theorie, deligne1974theorie} as an extension of classical Hodge theory of compact K\"ahler varieties to the non-compact and/or non-smooth case. For a complete treatment, the reader can consult the book \cite{peters2008ergebnisse}.

Let us begin by reviewing basic notions of Hodge theory.

\begin{Definition}
A \textbf{(pure) Hodge structure (of weight $k$)} on a $\mathbb{Z}$-module $V_{\mathbb{Z}}$ is the following decomposition of its complexification:
\[V:=V_{\mathbb{C}}=V_{\mathbb{Z}}\otimes_{\mathbb{Z}}\mathbb{C} = \bigoplus_{p+q=k} V^{p,q}, \;\; V^{p,q}=\overline{V^{q,p}}\; .\]
Equivalently, we can define what we call a \textbf{Hodge filtration} which is an index decreasing filtration
\[V\supset \cdots \supset F^p(V)\supset F^{p+1}(V)\supset \cdots\; ,\]
where $F^p(V)\cap \overline{F^q(V)}=V^{p,q}$ and $F^p(V)=\bigoplus_{i\geq p}V^{i, k-i}$. 
\end{Definition}

De Rham cohomology modules of compact K\"ahler manifolds naturally have a pure Hodge structure of the weight given by the order of the cohomology. The pieces of the Hodge structure are given by the Dolbeault cohomology. 
 
\begin{Theorem}[\textbf{Hodge decomposition}]
Let $X$ be a compact K\"ahler manifold,  $k^{th}$-cohomology carries a (pure) Hodge structure of weight $k$:
\[H_{DR}^{k}(X)\otimes \mathbb{C}=\bigoplus_{p+q=k}H^{p,q}(X)\; ,\]
where each graded piece is $H^{p,q}(X)\simeq H^q(X, \Omega^p)$, the $q^{th}$-cohomology of the $p$-differential, by Dolbeault's theorem. It satisfies $H^{p,q}(X)=\overline{H^{q,p}(X)}$, and 
the dimensions $h^{p,q}=\dim_{\mathbb{C}} H^{p,q}(X)$ are called the \textbf{Hodge numbers}.    
\end{Theorem}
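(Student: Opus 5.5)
The plan is the classical Hodge-theoretic route: harmonic representatives, then comparison of Laplacians on a Kähler manifold. First I will fix a Kähler metric $\omega$ on $X$ and use it to define the formal adjoints $d^*$, $\partial^*$, $\bar\partial^*$ and the Laplacians $\Delta_d = dd^*+d^*d$ and $\Delta_{\bar\partial}=\bar\partial\bar\partial^*+\bar\partial^*\bar\partial$. The first analytic input is the Hodge theorem for elliptic operators on a compact manifold. Since $\Delta_d$ and $\Delta_{\bar\partial}$ are elliptic and self-adjoint, each space of smooth forms splits orthogonally as harmonic forms plus the image of the Laplacian, and the harmonic spaces are finite dimensional. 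From this I get
\[H^k_{DR}(X)\otimes\mathbb{C}\cong \mathcal{H}^k_{\Delta_d}(X), \qquad H^{p,q}_{\bar\partial}(X)\cong \mathcal{H}^{p,q}_{\Delta_{\bar\partial}}(X).\]
The Dolbeault theorem then gives $H^{p,q}_{\bar\partial}(X)\simeq H^q(X,\Omega^p)$ via the fine resolution of $\Omega^p$ by the sheaves of $(p,\bullet)$-forms.

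The key step, and the one I expect to be the main obstacle, is the identity $\Delta_d = 2\Delta_{\bar\partial} = 2\Delta_\partial$ on a Kähler manifold. I will derive it from the Kähler identities $[\Lambda,\bar\partial]=-i\partial^*$ and $[\Lambda,\partial]=i\bar\partial^*$, where $\Lambda$ is the adjoint of the Lefschetz operator $L=\omega\wedge\cdot$. To prove these identities I would use the fact that a Kähler metric osculates to order two to the flat metric in holomorphic normal coordinates. Since both sides are first-order operators, it then suffices to check the identities on $\mathbb{C}^n$ with the standard metric, which is a direct computation in linear algebra. Granting the identities, I expand $\Delta_\partial$ and $\Delta_{\bar\partial}$ and use $\partial\bar\partial+\bar\partial\partial=0$ to show the cross terms $\partial\bar\partial^*+\bar\partial^*\partial$ vanish. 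This gives $\Delta_\partial=\Delta_{\bar\partial}$ and $\Delta_d=\Delta_\partial+\Delta_{\bar\partial}$.

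With the Laplacian identity established, $\Delta_d$ preserves bidegree, because $\Delta_{\bar\partial}$ does. So a $k$-form is $\Delta_d$-harmonic if and only if each of its $(p,q)$-components is, which gives
\[\mathcal{H}^k_{\Delta_d}=\bigoplus_{p+q=k}\mathcal{H}^{p,q}_{\Delta_d}=\bigoplus_{p+q=k}\mathcal{H}^{p,q}_{\Delta_{\bar\partial}}.\]
Combining this with the isomorphisms from the first step yields the decomposition in the statement. It is canonical, meaning independent of the metric, because $H^{p,q}(X)$ can be described as the classes in $H^k(X,\mathbb{C})$ represented by closed forms of type $(p,q)$.

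For the symmetry $H^{p,q}=\overline{H^{q,p}}$, I use that $\Delta_d$ is a real operator, so complex conjugation preserves $\Delta_d$-harmonic forms. Conjugation also sends $(p,q)$-forms to $(q,p)$-forms, so it maps $\mathcal{H}^{p,q}$ onto $\mathcal{H}^{q,p}$ bijectively. Finally, the Hodge numbers $h^{p,q}$ are finite, since the harmonic spaces are finite dimensional by ellipticity on the compact manifold $X$.
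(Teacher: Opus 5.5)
The paper gives no proof of this statement. It is recalled as classical background at the start of Section~1, with the reader sent to \cite{peters2008ergebnisse} for Hodge theory, so there is no argument in the text to compare yours with. Your proposal is the standard harmonic-forms proof, and it is correct in outline. Its steps are: the Hodge theorem for the elliptic Laplacians, the K\"ahler identities checked in osculating holomorphic coordinates, the identity $\Delta_d=2\Delta_{\bar\partial}$, the bidegree splitting of harmonic forms, and complex conjugation. A few points need tightening.

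\textbf{The Laplacian comparison.} You attach the two relations to the wrong steps.
\begin{itemize}
\item With $\bar\partial^*=-i[\Lambda,\partial]$ you get $\partial\bar\partial^*+\bar\partial^*\partial=-i(\Lambda\partial^2-\partial^2\Lambda)=0$. So the cross terms vanish because $\partial^2=0$, and $\bar\partial^2=0$ handles the adjoint cross term $\bar\partial\partial^*+\partial^*\bar\partial$.
\item This yields only $\Delta_d=\Delta_\partial+\Delta_{\bar\partial}$.
\item The equality $\Delta_\partial=\Delta_{\bar\partial}$ is a separate computation. Substituting both identities gives $\Delta_\partial-\Delta_{\bar\partial}=i\bigl(\Lambda(\partial\bar\partial+\bar\partial\partial)-(\partial\bar\partial+\bar\partial\partial)\Lambda\bigr)$. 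This is where $\partial\bar\partial+\bar\partial\partial=0$ enters.
\end{itemize}

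\textbf{Metric independence.} This claim needs its one-line proof. Let $\alpha$ be a $d$-closed $(p,q)$-form. The $\bar\partial$-Hodge decomposition gives $\alpha=h+\bar\partial\gamma$ with $h$ $\Delta_{\bar\partial}$-harmonic. Then $\alpha-h$ is $d$-closed and orthogonal to $\ker\Delta_{\bar\partial}=\ker\Delta_d$, so it is $d$-exact. Hence $[\alpha]=[h]$ lies in the image of $\mathcal{H}^{p,q}$.

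\textbf{The integral structure.} The paper defines a Hodge structure on $V_{\mathbb{Z}}\otimes\mathbb{C}$, so you should state the lattice explicitly. By de Rham's theorem, $H^k_{DR}(X)\otimes\mathbb{C}\cong V_{\mathbb{Z}}\otimes\mathbb{C}$ with $V_{\mathbb{Z}}=H^k(X,\mathbb{Z})/\mathrm{torsion}$. Conjugation of forms is exactly the conjugation with respect to this real structure. With that, your symmetry argument gives the pure weight $k$ Hodge structure in the paper's sense.
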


\begin{Remark}
   In particular, smooth projective varieties are K\"ahler manifolds and Hodge decomposition holds for these.  
\end{Remark}

A nice pictorial way to understand Hodge numbers is through the \textbf{Hodge diamond}:

\begin{equation}\label{pic:diamond}
\begin{array}{ccccccc}
& &  &   h^{n,n}  &  & & \\ 
& &  h^{n,n-1} &  & h^{n-1,n}  & &\\ 
& \udots &  &  &  & \ddots& \\
h^{n,0} & h^{n-1,1}  & & \cdots & &   h^{1,n-1} & h^{0,n}\\ 
& \ddots &  &  &  &\udots & \\
& &  h^{1,0} &  & h^{0,1} & &\\ 
& &    & h^{0,0} & &  &  \\
\end{array} 
\end{equation}

The Hodge diamond satisfies certain symmetries:
\begin{itemize}
    \item \textbf{Serre duality} $H^{p,q}(X)\simeq H^{n-p,n-q}(X)^{\vee}$ yields $h^{p,q}=h^{n-p, n-q}$. Then, the diamond is symmetric with respect to its center. 
    \item \textbf{Hodge symmetry} $H^{p,q}(X)=\overline{H^{q,p}(X)}$ yields $h^{p,q}=h^{q,p}$. Then, the diamond is symmetric with respect to the vertical axis. By composing with the central symmetry, it is also symmetric with respect to the horizontal axis. 
\end{itemize}

Hodge numbers come from De Rham cohomology, therefore they have to encode, in particular, the topology of the variety. By summing up the number in each row of the Hodge diamond we obtain the \textbf{Betti numbers} of $X$:
\[b_k=\dim H^k(X,\mathbb{C})=\sum_{p+q=k} h^{p,q}.\]

Let us show explicitly two main examples of pure Hodge structures: Riemann surfaces and complex projective spaces. 

\begin{Example}\label{ex:diamond_RS}
Let $X$ be a smooth complex projective genus $g$ curve or, equivalently, a compact Riemann surface. Its Hodge diamond is given by:
\[\begin{array}{ccc}
 & h^{1,1}=1 & \\
 h^{1,0}=g=\dim H^0(X, \Omega) &  & h^{0,1}=g\\
 & h^{0,0}=1 &
\end{array} \]
with Betti numbers $b_0=1$, $b_1=2g$, $b_2=1$. 
\end{Example}

\begin{Example}\label{ex:diamond_PS}
Let $\mathbb{P}_{\mathbb{C}}^n$ be the complex projective space of dimension $n$. Its Hodge diamond is given by 
\[\begin{array}{ccccccc}
& &  &   h^{n,n}=1 &  & & \\ 
& &  h^{n-1,1}=0 &  & h^{n,n-1}=0  & &\\ 
& \udots &   & h^{n-1,n-1}=1 &   & \ddots &\\ 
h^{i,0}=0 &   & & \vdots & &    & h^{0,i}=0\\ 
& \ddots&   & h^{1,1}=1 &   & \udots &\\ 
&  &  h^{1,0}=0 &  & h^{0,1}=0 & &\\ 
& &    & h^{0,0}=1 & &  &  \\
\end{array} \]
with Betti numbers $b_{2k}=1$, $k=0,1,\ldots, n$. 
\end{Example}

When we deal with complex varieties which are not smooth nor projective, classical Hodge decomposition fails. Deligne \cite{deligne1971theorie, deligne1974theorie} generalized this by extending the notion to Mixed Hodge Structures. 

\begin{Definition}
A \textbf{mixed Hodge structure} on $V:=V_{\mathbb{C}}=V_{\mathbb{Z}}\otimes_{\mathbb{Z}}\mathbb{C}$ is given by a decreasing Hodge filtration $F^{\bullet}(V)$
\[V\supset \cdots \supset F^p(V)\supset F^{p+1}(V)\supset \cdots\; , \]
together with an increasing \textbf{weight filtration} $W_{\bullet}$
\[0\subset \cdots W_{k-1}\subset W_k\subset \cdots \subset V\; ,\]
such that $F^{\bullet}(V)$ induces a (pure) weight $k$ Hodge structure on graded pieces $Gr_k^{W}(V):=W_k / W_{k-1}$. We define $V^{p,q}:= Gr_F^p Gr_{p+q}^W(V)$
and their dimensions $\dim_{\mathbb{C}} V^{p,q}(X)$ are called the \textbf{mixed Hodge numbers}.
\end{Definition}

Recall that \textbf{cohomology with compact support} is defined as the direct limit
\[H_c^{\bullet}(X)=\underset{\underset{K\subset X \, \text{compact}}{\longrightarrow}}{\lim} H^{\bullet}(X, X\backslash K)\]
taken over cohomology relative to the complement of a compact subset. This is the cohomology given by singular co-chains with support in some compact subset of $X$. See \cite{hatcher2002algebraic} for details. 

\begin{Theorem}\cite[Théorème 3.2.5]{deligne1971theorie}\cite[Proposition 8.3.9]{deligne1974theorie}
If $X$ is a quasi-projective algebraic variety (not necessarily smooth nor complete nor irreducible), singular cohomology $H^k(X)$ and singular compactly supported cohomology $H^k_c(X)$ carry \textbf{mixed Hodge structures}.
\end{Theorem}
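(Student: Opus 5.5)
The plan is to follow Deligne's strategy: reduce to the case of a smooth variety compactified by a normal crossings divisor, build the two filtrations at the level of a complex computing the cohomology, and then use resolution and descent to reach arbitrary quasi-projective $X$. The compactly supported version will then follow from the ordinary one via a compactification and the long exact sequence of a pair.

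\textbf{Smooth case.} Let $U$ be smooth quasi-projective. By Hironaka's resolution of singularities choose a smooth projective compactification $\overline{X}\supset U$ whose boundary $D=\overline{X}\setminus U$ is a simple normal crossings divisor $D=\bigcup D_i$. Then $H^k(U,\mathbb{C})$ is computed by the hypercohomology of the logarithmic de Rham complex $\Omega^\bullet_{\overline{X}}(\log D)$. On this complex I put two filtrations:
\begin{itemize}
\item the Hodge filtration $F^p$, the stupid truncation $\sigma_{\geq p}$;
\item the weight filtration $W_m$, given by the number of logarithmic poles.
\end{itemize}
The residue maps identify $Gr^W_m\Omega^\bullet_{\overline{X}}(\log D)$ with pushforwards of $\Omega^\bullet_{D^{(m)}}[-m]$, twisted by $(-m)$, where $D^{(m)}$ is the disjoint union of the $m$-fold intersections of the $D_i$. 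These intersections are smooth projective, so the Hodge decomposition theorem above gives pure Hodge structures on their cohomology. Consequently the $E_1$ page of the weight spectral sequence consists of pure Hodge structures of weight $q+2m$, and its differentials are Gysin maps, hence morphisms of Hodge structures.

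\textbf{Degeneration.} The main obstacle is to show that this weight spectral sequence degenerates at $E_2$ and that the induced filtrations give a genuine mixed Hodge structure; the key is strictness of the differentials. I would first establish that morphisms of pure Hodge structures of the same weight are strict with respect to $F$. The $E_2$ terms are then pure of weight $q+2m$, and $d_r$ for $r\geq 2$ maps between pure structures of different weights, so it must vanish. Taking the rational structure from $Rj_*\mathbb{Q}$, with $W$ given by the canonical truncation filtration, gives the $\mathbb{Q}$-lattice.

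\textbf{Singular case.} For an arbitrary quasi-projective $X$, I would build a proper hypercovering $X_\bullet\to X$ by smooth varieties. This exists by resolution of singularities, and the $X_n$ can also be chosen with normal crossings compactifications. By cohomological descent, $H^k(X)$ is the hypercohomology of the associated simplicial mixed Hodge complex. The diagonal filtration, with weight shifted by the simplicial degree, is again a mixed Hodge complex, and the same strictness argument shows it yields a mixed Hodge structure on $H^k(X)$. One must also check independence of the choices, via functoriality and comparison of two hypercoverings through a common refinement.

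\textbf{Compact supports.} Take a compactification $X\subset\overline{X}$ with complement $Z$. Then $H^k_c(X)=H^k(\overline{X},Z)$, which is the cohomology of the cone of $R\Gamma(\overline{X})\to R\Gamma(Z)$. A cone of a morphism of mixed Hodge complexes is again a mixed Hodge complex, so $H^k_c(X)$ inherits a mixed Hodge structure compatible with the long exact sequence of the pair.
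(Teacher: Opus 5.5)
The paper gives no proof of this statement; it only cites Deligne. Your outline follows exactly Deligne's route: the logarithmic de Rham complex with $F=\sigma_{\geq p}$ and $W$ by order of poles, residues onto the $D^{(m)}$, mixed Hodge complexes, smooth simplicial resolutions, and relative cohomology of a compactification for $H^k_c$. The architecture is right, but the step you yourself call the main obstacle is not closed by what you write.

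From ``$E_1$, hence $E_2$, is pure of the right weights'' and ``morphisms of pure Hodge structures are strict'' you cannot yet conclude $d_r=0$ for $r\geq 2$. The weight argument needs $d_r$ to be a morphism of Hodge structures, i.e.\ compatible with the Hodge filtration of $E_2=H(E_1,d_1)$. That filtration is the \emph{recurrent} filtration $F_{\mathrm{rec}}$, obtained by taking subquotients from $E_1$. A priori $d_r$ is only compatible with the \emph{direct} filtration $F_d$ that $F$ induces on $E_r=Z_r/B_r$ (and with its conjugate). Nothing formal forces $F_d=F_{\mathrm{rec}}$ on $E_r$.

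The missing idea is Deligne's lemma of two filtrations \cite{deligne1971theorie}: if $d_0,\dots,d_{r-1}$ are strictly compatible with $F_{\mathrm{rec}}$, then the direct, recurrent and inverse filtrations coincide on $E_r$. In your setting:
\begin{itemize}
\item strictness of $d_0$ is the $E_1$-degeneration of Hodge--de Rham for the smooth projective $D^{(m)}$;
\item strictness of $d_1$ is your strictness of morphisms of pure Hodge structures;
\item the lemma then makes $d_2$ a morphism between pure Hodge structures of different weights, so $d_2=0$, and inductively $d_r=0$ for all $r\geq 2$.
\end{itemize}
The same lemma shows that the filtration induced by $F$ on $\mathrm{Gr}^W_\bullet H^k$ agrees with the Hodge filtration of $E_\infty=E_2$. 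That is exactly the content of your claim that ``the induced filtrations give a genuine mixed Hodge structure''. Stated for arbitrary mixed Hodge complexes, it is also what your singular and compactly supported cases rely on.

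Two smaller points in those later steps:
\begin{itemize}
\item The normal crossings compactifications of the $X_n$ must be chosen compatibly with the face maps, giving a smooth proper simplicial $\overline{X}_\bullet$ with simplicial normal crossings boundary. This is a separate construction, not automatic from compactifying each $X_n$.
\item For $H^k_c$, the restriction $R\Gamma(\overline{X})\to R\Gamma(Z)$ must be realized as an honest morphism of mixed Hodge complexes, via compatible resolutions of the pair $(\overline{X},Z)$. The cone must also carry the shifted weight filtration $W_nC=W_{n-1}K[1]\oplus W_nL$; with the unshifted filtration it is not a mixed Hodge complex.
\end{itemize}
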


Therefore, for $X$ a quasi-projective variety and each $k$, the complex vector space $H^k(X)$ is endowed with a mixed Hodge structure yielding two filtrations whose associated graded is $V^{k;p,q}:= Gr_F^p Gr_{p+q}^W(H^k(X))$. We will call \textbf{mixed Hodge numbers} to the dimensions of these complex vector spaces, $h^{k,p,q}:=\dim_{\mathbb{C}} V^{k;p,q}$. They satisfy $h^{k,p,q}=h^{k,q,p}$ and note that it can be $p+q\neq k$. We call the 
\textbf{$k$-weights} of the mixed Hodge structure to the pairs $(p,q)$ such that $h^{k,p,q}\neq 0$. 

Similarly, we will denote by $h_c^{k,p,q}$ the \textbf{(compactly supported) mixed Hodge numbers} of the singular cohomology with compact support $H^k_c(X)$.
Observe that mixed Hodge structures yield compactly supported Betti numbers by $\dim H_{c}^{k}(X)= \sum_{p,q} h_c^{k,p,q}$. In the smooth case, by Poincaré duality, these give the usual Betti numbers. 

\begin{Remark}\label{rem:weights}
The existence of weights different from the order $k$ of the cohomology modules measure the failure of $X$ from being smooth or projective. In fact, if $X$ is non-singular then weights are $\geq k$ (with $p,q\leq k$) while if 
$X$ is projective, weights are $\leq k$ (\cite[Théorème 8.2.4]{deligne1974theorie}, \cite[Proposition 4.20]{peters2008ergebnisse}). Weight filtration stratifies this failure by codimension. 
\end{Remark}

We end this section with a toy but a key example from \cite{heinloth2024pisa}, which shows that diffeomorphic complex varieties (with the same differential structure) can have different Hodge structures. Moreover, these different Hodge structures can be pure and mixed. Then, mixed Hodge structures are invariants of the complex structure but they do not recognize the smooth structure of the variety. 

\begin{Example}\label{ex:2structures}

Let $\Sigma_1$ be a complex elliptic curve. Topologically, $\Sigma_1$ is homeomorphic to a torus $S^1\times S^1$. As a complex variety, $\Sigma_1$ is analytically isomorphic to the quotient of the complex plane by a lattice, $\mathbb{C}/(\mathbb{Z} +  \tau \mathbb{Z})$, where 
$\tau\in\mathbb{C}$. 

The fact that $\Sigma_1$ is a complex variety with a group structure makes its cotangent bundle $T^{\ast}\Sigma_1$ a trivial bundle, then $T^{\ast}\Sigma_1\simeq \mathbb{C}/(\mathbb{Z} +  \tau \mathbb{Z})\times \mathbb{C}$. Therefore, we can construct diffeomorphisms
\[T^{\ast}\Sigma_1\simeq \mathbb{C}/(\mathbb{Z} +  \tau \mathbb{Z})\times \mathbb{C}\simeq S^1\times S^1\times \mathbb{R}^2\simeq \mathbb{C}^{\ast}\times\mathbb{C}^{\ast}\]
between the cotangent bundle $T^{\ast}\Sigma_1\simeq \Sigma_1\times \mathbb{C}$ and the affine complex algebraic variety $\mathbb{C}^{\ast}\times\mathbb{C}^{\ast}$. Let us compute the cohomology of these two algebraic structures. 

Since $\mathbb{C}$ is a contractible topological space, the singular cohomology of the trivial cotangent bundle $T^{\ast}\Sigma_1\simeq \Sigma_1\times \mathbb{C}$ equals the cohomology of the elliptic curve $\Sigma_1$ itself, which is a genus one Riemann surface. Therefore, its Hodge structure yields a pure Hodge diamond, as in Example \ref{ex:diamond_RS}:
\[\begin{array}{ccc}
 & h^{1,1}=1 & \\
 h^{1,0}=g=1 &  & h^{0,1}=g=1\\
 & h^{0,0}=1 &
\end{array} \]
Observe that the weights $0,1,2$ coincide with the degree of the cohomology in each row, i.e. $p+q=k$, i.e. the Hodge structure is pure. 

On the other hand, the cohomology of $\mathbb{C}^{\ast}\times\mathbb{C}^{\ast}$ is, by K\"unneth isomorphism, 
\[H^{\ast}(\mathbb{C}^{\ast}\times\mathbb{C}^{\ast})\simeq H^{\ast}(\mathbb{C}^{\ast})\otimes H^{\ast}(\mathbb{C}^{\ast}).\] 
Because $\mathbb{C}^{\ast}$ is homotopically equivalent to $S^1$, its cohomology is \[H^{\bullet}(\mathbb{C}^{\ast})=H^0(\mathbb{C}^{\ast})\oplus H^1(\mathbb{C}^{\ast})=\mathbb{C}\oplus\mathbb{C}.\]
The only weights which can appear in the degree $1$ cohomology $H^1(\mathbb{C}^{\ast})$ (of a smooth non-projective complex variety, see Remark \ref{rem:weights}) are $h^{1,1,0}=\dim H^{1,0}(\mathbb{C}^{\ast})$, $h^{1,0,1}=\dim H^{0,1}(\mathbb{C}^{\ast})$ and $h^{1,1,1}=\dim H^{1,1}(\mathbb{C}^{\ast})$. Given that, necessarily, by duality of the mixed Hodge numbers, we have the isomorphism
\[H^{1,0}(\mathbb{C}^{\ast})\simeq H^{0,1}(\mathbb{C}^{\ast})\; ,\] 
and the following sum has to be satisfied
\[h^{1,1,0}+h^{1,0,1}+h^{1,1,1}=\dim H^{1,0}(\mathbb{C}^{\ast})+\dim H^{0,1}(\mathbb{C}^{\ast})+\dim H^{1,1}(\mathbb{C}^{\ast})=\dim H^{1}(\mathbb{C}^{\ast})=1\; ,\] 
we conclude that $H^{1,0}(\mathbb{C}^{\ast})=H^{0,1}(\mathbb{C}^{\ast})=0$ and $\dim H^{1,1}(\mathbb{C}^{\ast})\simeq \mathbb{C}$.
Therefore
\[H^{\bullet}(\mathbb{C}^{\ast})=H^{0,0}(\mathbb{C}^{\ast})\oplus H^{1,1}(\mathbb{C}^{\ast}).\] 
Now we use the K\"unneth isomorphism in cohomology to have
\[H^0\left(\mathbb{C}^{\ast}\times \mathbb{C}^{\ast}\right)\simeq H^{0}(\mathbb{C}^{\ast})\otimes H^{0}(\mathbb{C}^{\ast})=H^{0,0}(\mathbb{C}^{\ast})\otimes H^{0,0}(\mathbb{C}^{\ast}) =\mathbb{C}\otimes \mathbb{C}=\mathbb{C}=H^{0,0}\left((\mathbb{C}^{\ast})^2\right)\]
\[H^1\left(\mathbb{C}^{\ast}\times \mathbb{C}^{\ast}\right)\simeq \left(H^{1}(\mathbb{C}^{\ast})\otimes H^{0}(\mathbb{C}^{\ast})\right)\oplus \left(H^{0}(\mathbb{C}^{\ast})\otimes H^{1}(\mathbb{C}^{\ast})\right)=
\]
\[\left(H^{1,1}(\mathbb{C}^{\ast})\otimes H^{0,0}(\mathbb{C}^{\ast})\right)\oplus \left(H^{0,0}(\mathbb{C}^{\ast})\otimes H^{1,1}(\mathbb{C}^{\ast})\right)=\left(\mathbb{C}\otimes\mathbb{C}\right)\oplus \left(\mathbb{C}\otimes\mathbb{C}\right)=\mathbb{C}\oplus\mathbb{C}=\mathbb{C}^2=H^{1,1}\left((\mathbb{C}^{\ast})^2\right)\]
\[H^2\left(\mathbb{C}^{\ast}\times \mathbb{C}^{\ast}\right)\simeq H^{1}(\mathbb{C}^{\ast})\otimes H^{1}(\mathbb{C}^{\ast})=H^{1,1}(\mathbb{C}^{\ast})\otimes H^{1,1}(\mathbb{C}^{\ast}) =\mathbb{C}\otimes \mathbb{C}=\mathbb{C}=H^{2,2}\left((\mathbb{C}^{\ast})^2\right)\]
Hence, its mixed Hodge structure has Hodge numbers given by 
\begin{eqnarray*}
    h^{0,0,0}\left((\mathbb{C}^{\ast})^2\right)=1, & \text{weight 0 in the }0^{th}\text{-cohomology,}\\
h^{1,1,1}\left((\mathbb{C}^{\ast})^2\right)=2, &  \text{weight 2 in the }1^{st}\text{-cohomology and}\\   
h^{2,2,2}\left((\mathbb{C}^{\ast})^2\right)=1, &  \text{weight 4 in the }2^{nd}\text{-cohomology}.
\end{eqnarray*}
Observe finally that, being $(\mathbb{C}^{\ast})^2$ a non-singular variety, its weights are greater or equal than the order of the cohomology, as pointed out in Remark \ref{rem:weights}.
\end{Example}

\begin{Exercise}
Use the argument of Example \ref{ex:2structures} to find two different pure or mixed Hodge structures in the cotangent bundle $T^{\ast}\Pic^0_{\Sigma_g}$ where $\Pic^0_{\Sigma_g}$ is the jacobian variety of a compact Riemann surface of genus $g$, and its diffeomorphic variety $(\mathbb{C}^{\ast})^{2g}$.
\end{Exercise}

\section{The \texorpdfstring{$e$}{e}-polynomial and its basic properties}

There are a number of geometric invariants that we can construct by combining the mixed Hodge numbers $h^{k,p,q}=\dim_{\mathbb{C}} H^{k,p,q}(X)$ and the compactly supported ones $h_c^{k,p,q}=\dim_{\mathbb{C}} H_c^{k,p,q}(X)$.

\begin{Definition}\label{def:mHpolynomials}
Let $X$ be a complex variety, not necessarily smooth or projective. We define the \textbf{mixed Hodge polynomial} of $X$ as the polynomial on three variables given by
\begin{equation}
\mu(X;\,t,u,v):=\sum_{k,p,q}h^{k,p,q}(X)\ t^{k}u^{p}v^{q}\in\mathbb{\mathbb{N}}_{0}[t,u,v]\; .\label{eq:mu}
\end{equation}
Analogously, define the \textbf{compactly supported mixed Hodge polynomial} by
\begin{equation}
\mu_c(X;\,t,u,v):=\sum_{k,p,q}h_c^{k,p,q}(X)\ t^{k}u^{p}v^{q}\in\mathbb{\mathbb{N}}_{0}[t,u,v]\; .\label{eq:muc}
\end{equation}
\end{Definition}
Observe that mixed Hodge polynomials specialize to the
corresponding Poincaré polynomial $P(X)$ and compactly supported Poincaré polynomial $P_c(X)$), respectively, by setting $u=v=1$ in $P(X)=\mu(X;\,t,1,1)$ and
$P_{c}(X)=\mu_c(X;\,t,1,1)$.

By substituting $t=-1$, compactly supported mixed Hodge polynomials become a very useful
generalization of the Euler characteristic, called the $e$-polynomial of $X$.
\begin{Definition}\label{def:epolynomials}
Let $X$ be a complex variety, not necessarily smooth or projective. We define the \textbf{$e$-polynomial} of $X$ as the polynomial on two variables given by
\begin{equation}\label{eq:def_e}
e(X;\,u,v):=\sum_{k,p,q}(-1)^{k}h_c^{k,p,q}(X)\ u^{p}v^{q}\in\mathbb{Z}[u,v].
\end{equation}
\end{Definition}
Note that from the $e$-polynomial we can compute the \textbf{compactly supported Euler characteristic} of
$X$ as 
\begin{equation}
\chi_c(X)=e(X;\,1,1)=\mu_c(X;\,-1,1,1)\; .
\end{equation}
Observe that the compactly
supported Euler characteristic equals the usual one for complex quasi-projective
varieties. 

Defining the \textbf{$(p,q)$-(compactly supported) Euler characteristic} as 
\begin{equation}
\chi_c^{p,q}(X)=\sum_k(-1)^k h_c^{k,p,q}(X)\; ,
\end{equation}
we can express the $e$-polynomial as a two variable polynomial whose coefficients are these $(p,q)$-(compactly supported) Euler characteristics: 
\begin{equation}
e(X;\,u,v):=\sum_{p,q}\chi_c^{p,q}(X)\ u^{p}v^{q}\in\mathbb{Z}[u,v].
\end{equation}

\begin{Definition}\label{def:hodge-tate}
A complex variety $X$ is said to be of \textbf{Hodge-Tate type} or 
\textbf{balanced} type if all its $k$-weights are of the form
$(p,p)$ with $p\in\{0,\ldots,k\}$. 
\end{Definition}
Complex affine algebraic groups and smooth toric varieties are examples of balanced varieties. When computing $e$-polynomials of balanced varieties we will use the notation $x:=uv$ and $e(X; x):=e(X; uv)$, yielding polynomials in one variable.

\begin{Remark}\label{rem:irreducible}
    If $X$ is connected, non-singular and balanced, then there is a well-defined notion of highest-degree monomial in their compactly supported mixed Hodge and $e$-polynomials, given by 
    \[h^{k,p,p}(X)t^ku^pv^p=h^{k,p,p}(X)t^kx^p,\] 
    where $2p\geq k$. This is the leading term of the $e$-polynomial and the leading coefficient is the top mixed Hodge number $h^{k,p,p}(X)$ telling us how many irreducible components $X$ has. Therefore, if the $e$-polynomial of $X$ is monic, $X$ is \textbf{irreducible}. 
\end{Remark}

The following shows a useful relationship between mixed Hodge polynomials and $e$-polynomials for smooth varieties. 

\begin{Proposition}\cite[Corollary 2.1.5]{hausel2008mixed}\label{prop:relation_pols}
    Let $X$ be a smooth connected complex variety of complex dimension $d$. The following equality holds between mixed Hodge polynomials
    \begin{equation}
        \mu_c(X; t,u,v)=(t^2uv)^d\mu\left(\frac{1}{t},\frac{1}{u},\frac{1}{v}\right)\; .
    \end{equation}
    Therefore, we have this relationship between the $e$-polynomial and the mixed Hodge polynomial
       \begin{equation}
        e(X; u,v)=(uv)^d\mu\left(-1,\frac{1}{u},\frac{1}{v}\right)\; .
    \end{equation}
\end{Proposition}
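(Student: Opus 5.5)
The plan is to derive the identity from Poincaré duality, upgraded to a statement about mixed Hodge structures, and then specialise $t=-1$.

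\textbf{Step 1: duality as mixed Hodge structures.} For $X$ smooth and connected of complex dimension $d$, Poincaré duality gives a perfect pairing
\[H^k(X)\otimes H_c^{2d-k}(X)\longrightarrow H_c^{2d}(X)\simeq \mathbb{C}.\]
The key input I will use, from Deligne's theory \cite{deligne1974theorie}, \cite{peters2008ergebnisse}, is that this pairing is a morphism of mixed Hodge structures. Moreover, $H^{2d}_c(X)$ is the Tate structure $\mathbb{Q}(-d)$, which is pure of type $(d,d)$.

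\textbf{Step 2: matching graded pieces.} It follows that
\[H_c^{2d-k}(X)\simeq H^k(X)^{\vee}\otimes \mathbb{Q}(-d)\]
as mixed Hodge structures. The dual of a mixed Hodge structure has its $(p,q)$ piece dual to the $(-p,-q)$ piece, and the Tate twist shifts types by $(d,d)$. This should yield the numerical identity
\[h_c^{2d-k,\,d-p,\,d-q}(X)=h^{k,p,q}(X).\]
I expect this to be the main obstacle: justifying that Poincaré duality is compatible with both the weight and Hodge filtrations. I would not prove this from scratch but cite it as a standard property of Deligne's mixed Hodge structures, consistent with Remark \ref{rem:weights}. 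That remark says weights of $H^k$ are $\geq k$ for smooth $X$, and correspondingly weights of $H_c^{2d-k}$ are $\leq 2d-k$.

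\textbf{Step 3: bookkeeping.} Substitute the identity of Step 2 into the definition \eqref{eq:muc}, reindexing by $k'=2d-k$, $p'=d-p$, $q'=d-q$:
\[\mu_c(X;t,u,v)=\sum_{k,p,q}h^{k,p,q}(X)\,t^{2d-k}u^{d-p}v^{d-q}=(t^2uv)^d\sum_{k,p,q}h^{k,p,q}(X)\,t^{-k}u^{-p}v^{-q}.\]
The right-hand side is exactly $(t^2uv)^d\mu(1/t,1/u,1/v)$.

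\textbf{Step 4: specialisation.} Set $t=-1$ in Definition \ref{def:epolynomials}. Since $(-1)^{2d}=1$ and $1/(-1)=-1$, this gives
\[e(X;u,v)=\mu_c(X;-1,u,v)=(uv)^d\mu\left(-1,\tfrac1u,\tfrac1v\right),\]
which is the stated formula.
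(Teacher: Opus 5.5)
Your proposal is correct, and it is the standard argument behind the cited \cite[Corollary 2.1.5]{hausel2008mixed}: Poincaré duality $H^k(X)\otimes H_c^{2d-k}(X)\to H_c^{2d}(X)\cong\mathbb{Q}(-d)$ is a perfect pairing of mixed Hodge structures, which gives $h_c^{2d-k,d-p,d-q}=h^{k,p,q}$, and the rest is reindexing and setting $t=-1$. The paper gives no proof of its own beyond that citation (it leaves the statement as an exercise), so there is no different route to compare against.
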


\begin{Exercise}
 Prove Proposition \ref{prop:relation_pols} (c.f. \cite[Corollary 2.1.5]{hausel2008mixed}). 
\end{Exercise}

Let us compute these polynomials in simple cases. 

\begin{Example}
   The variety $\mathbb{C}^{\ast}$ is smooth and connected of complex dimension $1$. From Example \ref{ex:2structures} we know its mixed Hodge numbers, hence we have that its mixed Hodge polynomial is 
\begin{equation}\label{eq:muc_C*}
\mu(\mathbb{C}^{\ast}; t,u,v)=h^{0,0,0}t^0u^0v^0+h^{1,1,1}t^1u^1v^1=1+tuv.
\end{equation}
   By Proposition \ref{prop:relation_pols} we get
\begin{equation}\label{eq:mu_C*}
\mu_c(\mathbb{C}^{\ast}; t,u,v)=(t^2uv)\cdot \mu\left(\mathbb{C}^{\ast}; \frac{1}{t}, \frac{1}{u},\frac{1}{v}\right)=(t^2uv)\left(1+\frac{1}{tuv}\right)=t+t^2uv\; ,
\end{equation}
and 
\begin{equation}\label{eq:eC*}
e(\mathbb{C}^{\ast}; u,v)=\mu_c(\mathbb{C}^{\ast}; -1,u,v)=-1+uv\; .
\end{equation}
Being $\mathbb{C}^{\ast}$a balanced variety, we can rewrite its $e$-polynomial as
\begin{equation}\label{eq:eC*balanced}
e(\mathbb{C}^{\ast}; x)=\mu_c(\mathbb{C}^{\ast}; -1,u,v)=-1+x\; .
\end{equation}
\end{Example}

The e-polynomials satisfy a number of good properties which makes them suitable for computations. 

\begin{Proposition}\label{prop:multiplicative_pols}
Mixed Hodge and $e$-polynomials satisfy a multiplicative
property with respect to Cartesian products, i.e. 
\[\mu(X\times Y)=\mu(X)\cdot \mu(Y)\; ,\]
\[\mu_c(X\times Y)=\mu_c(X)\cdot \mu_c(Y)\; ,\]
\[e(X\times Y)=e(X)\cdot e(Y)\; .\]
\end{Proposition}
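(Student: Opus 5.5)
The plan is to derive all three identities from the Künneth isomorphism, using the fact that it is compatible with mixed Hodge structures. First, for quasi-projective varieties $X$ and $Y$, the Künneth isomorphisms
\[H^k(X\times Y)\simeq \bigoplus_{i+j=k} H^i(X)\otimes H^j(Y),\qquad H^k_c(X\times Y)\simeq \bigoplus_{i+j=k} H^i_c(X)\otimes H^j_c(Y)\]
(with $\mathbb{C}$ coefficients) are isomorphisms of mixed Hodge structures. Here the tensor product $V\otimes V'$ carries the filtrations $W_n(V\otimes V')=\sum_{a+b=n}W_aV\otimes W_bV'$ and $F^p(V\otimes V')=\sum_{r+s=p}F^rV\otimes F^sV'$. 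This compatibility is Deligne's theorem; see \cite{deligne1974theorie} or \cite{peters2008ergebnisse}. The compactly supported version follows from the ordinary one via compactifications, or we can simply cite it from the same source.

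Second, we compute the graded pieces of a tensor product. Since the categories of mixed Hodge structures are abelian and the functors $Gr^W$ and $Gr_F$ are exact, one checks that
\[Gr_F^p Gr^W_{p+q}(V\otimes V')\simeq \bigoplus_{\substack{r+r'=p\\ s+s'=q}} Gr_F^r Gr^W_{r+s}(V)\otimes Gr_F^{r'}Gr^W_{r'+s'}(V').\]
The cleanest route is Deligne's splitting $V_{\mathbb{C}}=\bigoplus I^{p,q}$, which is functorial and multiplicative under tensor product. This gives $I^{p,q}(V\otimes V')=\bigoplus I^{r,s}(V)\otimes I^{r',s'}(V')$, and the dimensions of the $I^{p,q}$ are exactly the mixed Hodge numbers. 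Taking dimensions, we obtain
\[h^{k,p,q}(X\times Y)=\sum_{i+j=k}\ \sum_{\substack{r+r'=p\\ s+s'=q}} h^{i,r,s}(X)\,h^{j,r',s'}(Y),\]
and the analogous formula for the $h_c$.

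Third, we multiply generating functions. The coefficient of $t^ku^pv^q$ in $\mu(X)\mu(Y)$ is precisely the right-hand side of the formula above, so $\mu(X\times Y)=\mu(X)\mu(Y)$, and likewise $\mu_c(X\times Y)=\mu_c(X)\mu_c(Y)$. Because substitution $t=-1$ is a ring homomorphism, specializing gives $e(X\times Y)=\mu_c(X\times Y;-1,u,v)=e(X)e(Y)$ directly from Definition \ref{def:epolynomials}.

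The main obstacle is the first step: showing that the Künneth map respects both filtrations, particularly for compact supports. I would not reprove this but quote Deligne's result that the external cup product is a morphism of mixed Hodge structures. Once that is granted, the rest is bookkeeping with the multiplicativity of the $I^{p,q}$ splitting.
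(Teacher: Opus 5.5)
Your proposal is correct and follows the same approach as the paper, whose proof simply cites the compatibility of the Künneth isomorphism with mixed Hodge structures. Your additional steps (multiplicativity of Deligne's $I^{p,q}$ splitting under tensor products, comparison of coefficients, and the specialization $t=-1$) are the routine bookkeeping that the paper leaves implicit, and they are carried out correctly.
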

\begin{proof}
This comes from the fact that K\"unneth isomorphism $H^{\ast}(X\times Y)\simeq H^{\ast}(X)\otimes H^{\ast}(Y)$ is compatible with mixed Hodge structures, see \cite{peters2008ergebnisse, hausel2008mixed}. 
\end{proof}

Moreover, compared to mixed Hodge polynomials and Poincaré polynomials, $e$-polynomial are additive over locally closed stratifications, because of the following: 
\begin{Proposition}\cite[Théorème 8.3.9]{deligne1974theorie}\label{prop:e-strat}
Let $X$ be a complex variety and let $Z\subset X$ be a closed subvariety. Then we have 
\[
e(X)=e(Z)+e(X\backslash Z),
\]
and also
\[\chi_c(X)=\chi_c(Z)+\chi_c(X\backslash Z).\] 
\end{Proposition}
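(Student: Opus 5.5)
The plan is to derive the additivity from the long exact sequence in compactly supported cohomology for the pair $(X,Z)$, together with the fact, due to Deligne, that this sequence is a sequence of mixed Hodge structures. Set $U:=X\backslash Z$, which is open in $X$. Let $j\colon U\hookrightarrow X$ and $i\colon Z\hookrightarrow X$ be the inclusions. There is a short exact sequence of sheaves
\[0\to j_!\mathbb{C}_U\to \mathbb{C}_X\to i_\ast\mathbb{C}_Z\to 0\; .\]
Applying compactly supported cohomology gives the long exact sequence
\[\cdots\to H^k_c(U)\to H^k_c(X)\to H^k_c(Z)\to H^{k+1}_c(U)\to\cdots\; .\]
Topologically this is standard, since $Z$ is closed and $U$ is open. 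It is a finite sequence because compactly supported cohomology of a complex variety vanishes above twice its dimension.

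The key input, and the only real obstacle, is that every map in this sequence, including the connecting homomorphism, is a morphism of mixed Hodge structures. This is exactly the content of Deligne's result cited with the statement, so I will assume it rather than reprove it. Proving it from scratch would require constructing compatible mixed Hodge complexes for $X$, $Z$ and $U$, for instance via simplicial resolutions and compactifications with normal crossings boundary.

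Given compatibility, I will use that morphisms of mixed Hodge structures are \emph{strict} with respect to both filtrations $W_\bullet$ and $F^\bullet$. Consequently, the functor $V\mapsto Gr_F^pGr^W_{p+q}V$ is exact on the category of mixed Hodge structures. Applying it to the long exact sequence yields, for each fixed $(p,q)$, a finite long exact sequence of finite-dimensional complex vector spaces
\[\cdots\to H^{k;p,q}_c(U)\to H^{k;p,q}_c(X)\to H^{k;p,q}_c(Z)\to H^{k+1;p,q}_c(U)\to\cdots\; .\]

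For a finite exact sequence of finite-dimensional vector spaces, the alternating sum of dimensions vanishes. Here this gives
\[\sum_k(-1)^k h_c^{k,p,q}(X)=\sum_k(-1)^k h_c^{k,p,q}(Z)+\sum_k(-1)^k h_c^{k,p,q}(U)\; .\]
In the notation above, this reads $\chi_c^{p,q}(X)=\chi_c^{p,q}(Z)+\chi_c^{p,q}(U)$. Multiplying by $u^pv^q$ and summing over $(p,q)$ gives $e(X)=e(Z)+e(X\backslash Z)$.

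Specializing at $u=v=1$ gives the statement for $\chi_c$. This last identity can also be obtained directly, by taking the alternating sum of dimensions in the original long exact sequence without using any Hodge theory.
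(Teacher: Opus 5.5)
Your proof is correct, and it is the standard argument behind this statement: the paper gives no proof beyond the citation to Deligne. You correctly isolate the one deep input from that reference, namely that $\cdots\to H^k_c(X\backslash Z)\to H^k_c(X)\to H^k_c(Z)\to H^{k+1}_c(X\backslash Z)\to\cdots$ is an exact sequence of mixed Hodge structures, and then obtain additivity of each $\chi_c^{p,q}$ (hence of $e$ and, at $u=v=1$, of $\chi_c$) from strictness and the exactness of $Gr_F^pGr^W_{p+q}$.
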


One of the crucial facts about the good behavior of $e$-polynomials, allowing to compute them in many cases, is its multiplicativity under certain fibrations. Let $X,B$ be quasi-projective varieties and let $\pi:X\rightarrow B$ be an algebraic morphism which is an algebraic fibration, i.e. where all fibers are isomorphic to an algebraic variety $F$. 

\begin{Theorem}\label{thm:fibrations}
Let $\pi:X\rightarrow B$ be an algebraic fibration with fiber $F$ as above, such that all three spaces $X,B,F$ are smooth, the fibration is locally trivial in the analytic topology and the fundamental group of the base $\pi_1(B)$ acts trivially on $H_c^{\ast}(F)$, the compactly supported cohomology of the fiber. Then, 
\[e(X)=e(F)\cdot e(B)\; .\]
\end{Theorem}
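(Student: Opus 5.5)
Because $\pi$ is locally trivial in the analytic topology and $\pi_1(B)$ acts trivially on $H^\ast_c(F)$, I will use the Leray spectral sequence with compact supports and show it is compatible with mixed Hodge structures.

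First, consider the Leray spectral sequence for $R\pi_!$:
\[
E_2^{a,b}=H^a_c\bigl(B, R^b\pi_!\,\mathbb{Q}_X\bigr)\Longrightarrow H^{a+b}_c(X).
\]
Local triviality gives that $R^b\pi_!\mathbb{Q}_X$ is a local system on $B$ with stalk $H^b_c(F)$. The hypothesis that $\pi_1(B)$ acts trivially makes this local system constant. Hence
\[
E_2^{a,b}\simeq H^a_c(B)\otimes H^b_c(F).
\]

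Second, I need the spectral sequence to live in the category of mixed Hodge structures. By Saito's theory of mixed Hodge modules, $R\pi_!$ lifts to the derived category of mixed Hodge modules. The smoothness of $X,B,F$ lets us work with admissible variations of mixed Hodge structure rather than general modules. The Leray spectral sequence is then a spectral sequence of mixed Hodge structures, and its differentials are morphisms of MHS. So each $E_r^{a,b}$ carries a MHS, and $H^k_c(X)$ has a filtration whose graded pieces are $E_\infty^{a,b}$ with $a+b=k$, compatibly with $F$ and $W$.

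I expect the main obstacle to be the identification $E_2^{a,b}\simeq H^a_c(B)\otimes H^b_c(F)$ as an isomorphism of MHS, not merely of vector spaces. The constant local system $R^b\pi_!\mathbb{Q}$ underlies a variation of MHS, and one must check that this variation is constant. My first attempt is rigidity: an admissible variation whose underlying local system is trivial, over a smooth connected base, has constant Hodge and weight filtrations, by the theorem of the fixed part. Then the fibre MHS is that of $H^b_c(F)$. The Künneth compatibility already used in Proposition \ref{prop:multiplicative_pols} then gives the tensor-product MHS on $E_2$.

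Finally, take Euler characteristics. Each $E_{r+1}$ is the cohomology of $(E_r,d_r)$, and the $d_r$ are strict morphisms of MHS, since the category of MHS is abelian and $\mathrm{Gr}_F\mathrm{Gr}^W$ is exact. Hence for every $(p,q)$ the alternating sum $\sum_{a,b}(-1)^{a+b}\dim (E_r^{a,b})^{p,q}$ is independent of $r$. Comparing $r=2$ with $r=\infty$ and using the filtration on $H^k_c(X)$ yields
\[
\chi_c^{p,q}(X)=\sum_{p_1+p_2=p,\ q_1+q_2=q}\chi_c^{p_1,q_1}(B)\,\chi_c^{p_2,q_2}(F).
\]
This is exactly $e(X)=e(F)\cdot e(B)$.
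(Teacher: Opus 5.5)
Your proposal is correct and is essentially the argument behind the paper's proof, which only cites \cite[Proposition 2.4]{logares2013hodge} and \cite[Theorem 6.1]{dimca1997purity}: the compactly supported Leray spectral sequence is a spectral sequence of mixed Hodge structures, trivial monodromy gives $E_2^{a,b}\cong H^a_c(B)\otimes H^b_c(F)$ as MHS, and the $(p,q)$-Euler characteristics do not change from $E_2$ to $E_\infty$. One point you should state explicitly is that Saito's formalism directly produces the \emph{perverse} Leray spectral sequence; it coincides, after shifting by $\dim B$, with the classical one here because every $R^b\pi_!\mathbb{Q}_X$ is a local system on the smooth base $B$.
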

\begin{proof}
See \cite[Proposition 2.4]{logares2013hodge} or \cite[Theorem 6.1]{dimca1997purity}.
\end{proof}

\begin{Remark}\label{rem:fibrations}
    The hypothesis of Theorem \ref{thm:fibrations} are satisfied in many relevant situations where computations of $e$-polynomials arise:
\begin{enumerate}
\item[(a)] If the fibration is locally trivial in the Zariski topology of the base $B$, and $B$ is irreducible (\cite[Remark 2.5]{logares2013hodge}). 
\item[(b)] If $F$ is a complex connected algebraic group and $\pi$ is a principal $F$-bundle.
\item[(c)] In particular, if $F$ is \textbf{special} \cite{grothendieck1958special}, which means that all principal $F$-bundles are locally Zariski trivial.
\item[(d)] If $X=G$ is an algebraic reductive group, its center $F=Z(G)$ is connected and the base is the adjoint group $B=PG=G/Z(G)$ (\cite[Proposition 2.6]{florentino2021serre}).
\end{enumerate}
\end{Remark}

Let us explore these properties to compute $e$-polynomials.

\begin{Example}
\label{ex:sigma_g}
Let $\Sigma_g$ be a genus $g$ compact Riemann surface. In this case, compactly supported cohomology equals usual cohomology and the (pure) Hodge numbers were given in Example \ref{ex:diamond_RS}. Then, the different invariants are
\[\mu(\Sigma_g;t,u,v)=\mu_c(\Sigma_g;t,u,v)=1+gt(u+v)+t^2uv\; ,\]
\[P(\Sigma_g;t)=1+2gt+t^2\; ,\]
\[e(\Sigma_g;u,v)=1-g(u+v)+uv\; ,\]
\[\chi(\Sigma_g)=2-2g\; .\]
Observe that a compact Riemann surface $\Sigma_g$ carry a pure but not balanced Hodge structure, because there are weights $h^{1,1,0}=h^{1,0,1}\neq 0$ which are not of the form $h^{k,p,p}$. As a consequence, its $e$-polynomial cannot be expressed as a one-variable polynomial. 
\end{Example}

\begin{Example}
Using the Hodge numbers described in Example \ref{ex:diamond_PS}, the invariants of the projective space are 
\[\mu(\mathbb{P}^n_{\mathbb{C}};t,u,v)=\mu_c(\mathbb{P}^n_{\mathbb{C}};t,u,v)=1+t^2uv+t^4u^2v^2+\cdots +t^{2n}u^nv^n\; ,\]
\[P(\mathbb{P}^n_{\mathbb{C}};t)=1+t^2+t^4+\cdots +t^{2n}\; ,\]
\[e(\mathbb{P}^n_{\mathbb{C}};u,v)=1+uv+u^2v^2+\cdots + u^nv^n=1+x+x^2+\cdots + x^n\; ,\]
\[\chi(\mathbb{P}^n_{\mathbb{C}})=n+1\; .\]
Observe that the projective space carries a pure and also balanced Hodge structure. 
\end{Example}

\begin{Example}
The locally closed decomposition $\mathbb{C}=\mathbb{C}^{\ast}\sqcup \{pt\}$ yields the equality 
\[e(\mathbb{C};u,v)=e(\mathbb{C}^*;u,v)+e(\{pt\};u,v)=(uv - 1)+1=(x-1)+1=x\; ,\] between the corresponding $e$-polynomials. 
\end{Example}

\begin{Example}
    By the computations in Example \ref{ex:2structures}, the mixed Hodge polynomial of $(\mathbb{C}^{\ast})^2$ is 
    \[\mu((\mathbb{C}^{\ast})^2; t,u,v)=1+2tuv+t^2u^2v^2.\]
    Using Proposition \ref{prop:relation_pols}, we obtain the compactly supported mixed Hodge polynomial
    \[\mu_c((\mathbb{C}^{\ast})^2; t,u,v)=(t^2uv)^2\mu((\mathbb{C}^{\ast})^2; t,u,v)=\]
    \[(t^2uv)^2\left(1+\frac{2}{tuv}+\frac{1}{t^2u^2v^2}\right)=t^2+2t^3uv+t^4u^2v^2.\]
    From this, we can extract the compactly supported mixed Hodge numbers of $(\mathbb{C}^{\ast})^2$, obtaining $h_c^{2,0,0}\left((\mathbb{C}^{\ast})^2\right)=1$, $h_c^{3,1,1}\left((\mathbb{C}^{\ast})^2\right)=2$, $h_c^{4,2,2}\left((\mathbb{C}^{\ast})^2\right)=1$. Now, we can apply the definition (\ref{eq:def_e}) to get the $e$-polynomial:
    \[e((\mathbb{C}^{\ast})^2; u,v)=\mu_c((\mathbb{C}^{\ast})^2;-1,u,v)=1-2uv+u^2v^2.\]
    We observe that $(\mathbb{C}^{\ast})^2$ is a balanced variety, then we can express its $e$-polynomial in one variable:
    \[e((\mathbb{C}^{\ast})^2; x)=1-2x+x^2.\]
    Finally, note that Proposition \ref{prop:multiplicative_pols} holds in this case:
    \[\mu((\mathbb{C}^{\ast})^2; t,u,v)=1+2tuv+t^2u^2v^2=(1+tuv)^2=\mu(\mathbb{C}^{\ast}; t,u,v)\cdot \mu(\mathbb{C}^{\ast}; t,u,v)\; ,\]
    \[\mu_c((\mathbb{C}^{\ast})^2; t,u,v)=t^2+2t^3uv+t^4u^2v^2=(t+t^2uv)^2=\mu_c(\mathbb{C}^{\ast}; t,u,v)\cdot \mu_c(\mathbb{C}^{\ast}; t,u,v)\; ,\]
    \[e((\mathbb{C}^{\ast})^2; t,u,v)=1-2uv+u^2v^2=(-1+uv)^2=e(\mathbb{C}^{\ast}; t,u,v)\cdot e(\mathbb{C}^{\ast}; t,u,v).\]
\end{Example}

Observe that all these varieties $\Sigma_g$, $\mathbb{P}^n_{\mathbb{C}}$, $\mathbb{C}^{\ast}$, $(\mathbb{C}^{\ast})^2$ are irreducible, which can be seen from their $e$-polynomials (see Remark \ref{rem:irreducible}). 

\begin{Exercise}
    Compute the mixed Hodge, compactly supported mixed Hodge, $e$- and Poincaré polynomials of the maximal torus of the complex group $\GL_n$. 
\end{Exercise}

\begin{Example}
Let us compute the $e$-polynomial of $\GL_2:=\GL_2(\mathbb{C})$ by using the property on fibrations. On the one hand, we have the surjection
\[\GL_2\twoheadrightarrow \mathbb{C}^2\backslash \{(0,0)\}, \;\; \left(\begin{array}{cc}
a & b \\ 
c & d
\end{array} \right)\mapsto (a,c)\; ,\] 
where fibers are given by vectors $(b,d)$ linearly independent with $(a,c)$, i.e. fibers are isomorphic to $\mathbb{C}^2\backslash \mathbb{C}$. Then, this is a locally Zariski trivial fibration 
\[\mathbb{C}^2\backslash \mathbb{C}\hookrightarrow\GL_2\twoheadrightarrow \mathbb{C}^2\backslash \{(0,0)\}\; ,\]
where the $e$-polynomials of the base and the fiber can be computed by Propositions \ref{prop:multiplicative_pols} and \ref{prop:e-strat}:
\[e(\mathbb{C}^2\backslash \mathbb{C};u,v)=e(\mathbb{C}^2;u,v)-e(\mathbb{C};u,v)=
e(\mathbb{C};u,v)\cdot e(\mathbb{C};u,v)-e(\mathbb{C};u,v)=\]
\[(uv)\cdot (uv) - uv = u^2v^2-uv\]
\[\text{and}\quad\quad e(\mathbb{C}^2\backslash \{(0,0)\};u,v)=e(\mathbb{C}^2;u,v)-e(\{(0,0)\};u,v)=\]
\[e(\mathbb{C};u,v)\cdot e(\mathbb{C};u,v)-e(\{(0,0)\};u,v)=(uv)\cdot (uv) - 1=u^2v^2-1.\]
Therefore, by Theorem \ref{thm:fibrations} and Remark \ref{rem:fibrations}, 
\[e(\GL_2;u,v)=e(\mathbb{C}^2\backslash \mathbb{C};u,v)\cdot e(\mathbb{C}^2\backslash \{(0,0)\};u,v)=(u^2v^2-uv)(u^2v^2-1)\; ,\]
which can be expressed as the $e$-polynomial in one variable for a balanced variety:
\[e(\GL_2;x)=e(\mathbb{C}^2\backslash \mathbb{C};x)\cdot e(\mathbb{C}^2\backslash \{(0,0)\};x)=
(x^2-x)\cdot (x^2-1)=x^4-x^3-x^2+x.\]

\end{Example}

\begin{Example}
For a fibration to satisfy the multiplicative property on $e$-polynomials, hypotheses in Theorem \ref{thm:fibrations} and Remark \ref{rem:fibrations} are quite restrictive. For example, consider a double cover $X\rightarrow \Sigma_1$ of an elliptic curve (i.e. a Riemann surface of genus $1$) with two branching points, what prevents it from being locally trivial. By the Riemann-Hurwitz formula, the genus of the covering space $g(X)$ is related to the genus of the base $g(\Sigma_1)=1$ by the formula
\[2g(X)-2=n(2g(\Sigma_1)-2)+\sum_{branch.}(e_i-1)\]
where $e_1=e_2=1$ and $n=2$ is the degree of the covering. Hence, 
\[2g(X)-2=2(2\cdot 1 - 1)+(1+1)\Rightarrow g(X)=2\]
therefore $X=\Sigma_2$, a Riemann surface of genus $2$. Using the computations in Example \ref{ex:sigma_g}, the $e$-polynomials of the total space and the base of this covering are, 
\[e(\Sigma_2)=1-2(u+v)+uv\quad \text{and}\quad e(\Sigma_1)=1-(u+v)+uv.\]
Observe that the latter polynomial does not divide the former, hence the $e$-polynomial does not satisfy any sort of multiplicative property.  Although the generic fiber is two points, whose $e$-polynomial is $2$, it is false that $2\cdot e(\Sigma_1)=e(\Sigma_2)$. 
\end{Example}

\begin{Exercise}
    Compute the $e$-polynomial of the complex groups  $\SL_n$ and $\PGL_n$.
\end{Exercise}

\begin{Remark}\label{rem:motives}

Let ${\rm KVar}$ denote the ring generated by isomorphism classes $[X]$ of algebraic varieties modulo cut-and-paste relations $[X]=[Y]+[X-Y]$, for $Y\subseteq X$ a closed subvariety, and where multiplication in the ring is given by $[X\times Z]=[X]\cdot [Z]$. The elements in this \textbf{Grothendieck ring of algebraic varieties} are called \textbf{virtual classes} or \textbf{motives} of the varieties. It can be shown that the $e$ polynomial factors through the Grothendieck ring $e:{\rm KVar}\rightarrow \mathbb{Z}[u,v]$, making the $e$-polynomial a coarser invariant for complex algebraic varieties, than what the motive is. 
\end{Remark}

\section{Techniques to compute \texorpdfstring{$e$}{e}-polynomials of character varieties}

In this section we introduce character varieties, which are affine algebraic varieties playing a prominent role in algebraic geometry, and we present a combination of arithmetic and geometric techniques to compute their character varieties. 

\subsection{Character varieties}

Let us start by defining the notion of representation variety. 

\begin{Definition}\label{def:rep_var}
 Let $G$ be a reductive algebraic group over $\mathbb{C}$. Let $\Gamma = \langle \gamma_1, \ldots, \gamma_s\; : \; r_{t}(\gamma_i)=1\rangle$ be a finitely generated group. 
We define the \textbf{$G$-representation variety} as 
\[\mathcal{R}_{G}(\Gamma):=\Hom (\Gamma, G)=\left\{\rho(\gamma) = \big(\rho(\gamma_{1}),\ldots,\rho(\gamma_{s})\big)\in G^s : r_{t}(\rho(\gamma))=1 \right\},\]
which is an affine algebraic variety.   
\end{Definition}

There is an action of $G$ on $\mathcal{R}_{G}(\Gamma)$ by conjugation. 
For $\rho\in \mathcal{R}_{G}(\Gamma)$, $g\in G$, $\gamma\in \Gamma$, we have 
\[(g\cdot \rho) (\gamma) := g \rho(\gamma) g^{-1}=\big(g\rho(\gamma_{1})g^{-1},\ldots,g\rho(\gamma_{s})g^{-1}\big)\, .\]

The next definition presents character varieties as Geometric Invariant Theory (GIT) quotients. Readers not familiar with GIT can consult the seminal works \cite{mumford1994geometric, newstead2012introduction}. Essentially, affine GIT combines together orbits in the quotient whose closures intersect (hence the double slash), in order to have a Hausdorff quotient and such that the ring of functions of the quotient equals the ring of $G$-invariant functions in the domain: this way invariant theory turns out to have a geometrical meaning. 

\begin{Definition}\label{def:CV}
  We define the \textbf{$G$-character variety} of $\Gamma$ as the affine GIT quotient of the $G$-representation variety by the conjugation action:
\[\mathcal{X}_{G}(\Gamma):= \mathcal{R}_{G}(\Gamma)/\!\!/G =
\Spec \mathbb{C}[\mathcal{R}_{G}(\Gamma)]^{G}.\]  
\end{Definition}

Let us also denote by $\mathcal{R}^{\ast}_{G}(\Gamma)$ the set of \textbf{irreducible representations} and \linebreak $\mathcal{X}_{G}^{\ast}(\Gamma):=\mathcal{R}^{\ast}_{G}(\Gamma)/G$ the corresponding GIT quotient, which is a geometric quotient or an orbit space, where each point corresponds to an orbit. 

The name character varieties comes from the fact that these varieties are generated by characters. 
Given a representation $\rho\in\mathcal{R}_{G}(\Gamma)$, the character of the representation is
\[\chi_{\rho}:\Gamma\longrightarrow \mathbb{C}\; , \quad \gamma\mapsto \tr (\rho(\gamma))\; .\]
Artin \cite{artin1969azumaya} conjectures that the invariants of complex matrices under simultaneous conjugations are polynomials in $\tr(X_{i_1}\cdots X_{i_s})$. This was proved by Procesi in \cite{procesi1976invariant}. Basic examples of this are the isomorphism of algebras
\[\mathbb{C}[\GL_n]^{\GL_n} \simeq \mathbb{C}[c_1, \ldots , c_n^{\pm 1} ]\; ,\]
where $c_0 = 1$, $c_1 = \tr X$, ..., $c_n = \det X$ are the coefficients of the characteristic polynomial of $X$. A more elaborated one says that the invariants of pairs $(A,B)$ of $2\times 2$ matrices under simultaneous conjugation
are generated by the five traces $\tr A$, $\tr A^2$, $\tr B$, $\tr B^2$, $\tr AB$. The classical result of Fricke and Klein \cite{klein1890elliptic} states that the ring of $\SL_2$-invariant pairs of matrices is the ring of polynomials generated by the traces of both matrices and the trace of the product, this is
\[\mathbb{C}[\mathcal{R}_{\SL_2}(F_2)]^{\SL_2}=\mathbb{C}[\tr A, \tr B, \tr AB]\; ,\]
where $F_r$ denotes the free group in $r$ generators. From this, we obtain that the character variety $\mathcal{X}_{\SL_2}(F_2)$ is isomorphic to $\mathbb{C}^3$.

When the finitely generated group $\Gamma$ is $\pi_{1}(\Sigma_{g})$, the fundamental group of a genus $g$ Riemann surface $\Sigma_{g}$, character varieties are related to moduli spaces of Higgs bundles through the \textbf{non-abelian Hodge correspondence} (\cite{hitchin1987, Donaldson87, Corlette88, simpson}) stating that the character variety (sometimes called the \textbf{Betti moduli space}) $\mathcal{X}_G(\Gamma)=\mathcal{R}_G(\Gamma)/\!\!/G $ is diffeomorphic (but not complex algebraic isomorphic) to  the Dolbeault moduli space of $G$-Higgs bundles over $\Sigma_g$.

\subsection{Arithmetic methods to compute \texorpdfstring{$e$}{e}-polynomials}

Here we will recall the ideas behind the use of arithmetics to compute $e$-polynomials as polynomials counting the number of points of a variety over a finite field. This is the strategy used in \cite{hausel2008mixed} to compute the $e$-polynomial of the character varieties $\mathcal{X}_{\GL_n}(\Gamma)$ for $\Gamma=\pi_1(\Sigma_g)$, the fundamental group of a genus $g$ Riemann surface. It is based on a particular property of certain varieties for which there exists a polynomial counting the number of points over almost each finite field, and which coincides with the $e$-polynomial. This method is inspired in the Weil conjectures. 

Let $\mathbb{F}_{q}$ be a 
finite field with $q$ elements and characteristic $p$, so that $q=p^{s}$, $s\in\mathbb{N}$. A scheme $X$, defined over $\mathbb{Z}$,
is called of \textbf{polynomial type} if there exists a polynomial $C_{X}(t)\in\mathbb{Z}[t]$
(called the \textbf{counting polynomial} for $X$) such that the number
of $\mathbb{F}_{q}$-points of $X$ is given by the evaluation of the counting polynomial at $q$: 
\[
|X/\mathbb{F}_{q}|=C_{X}(q)\; ,
\]
for every $s$ and almost every prime $p$ (i.e. for every prime except for a finite number of them). 

\begin{Theorem}\cite[Appendix]{hausel2008mixed}\label{thm:Katz}
Let $X$ be a $\mathbb{Z}$-scheme of polynomial type with counting polynomial $C_{X}$. Then, $X$ is balanced and the $e$-polynomial of the complex variety $X(\mathbb{C}):=X\otimes_{\mathbb{Z}}\mathbb{C}$
coincides with the counting polynomial: 
\[
e(X(\mathbb{C});x)=C_{X}(x)\; .
\]
\end{Theorem}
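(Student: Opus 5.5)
The plan is to follow Katz's argument: compare the Hodge-theoretic $e$-polynomial with point counts through $\ell$-adic cohomology and Frobenius eigenvalues. First I spread out the variety. Since $X$ is of finite type over $\mathbb{Z}$, I choose a finitely generated $\mathbb{Z}$-algebra $R\subset\mathbb{C}$, which I may take to be $\mathbb{Z}[1/N]$ after discarding the finitely many bad primes. Over $\Spec R$, the compactly supported $\ell$-adic cohomology sheaves $R^k\pi_!\mathbb{Q}_\ell$ of $\pi:X_R\to\Spec R$ are lisse after further localization. By generic base change and Artin's comparison theorem, their stalks compute $H^k_c(X(\mathbb{C}),\mathbb{Q}_\ell)$ at the complex point and $H^k_c(X_{\overline{\mathbb{F}}_q},\mathbb{Q}_\ell)$ at closed points. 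On the Hodge side, Deligne's weight filtration on $H^k_c(X(\mathbb{C}))$ has an $\ell$-adic counterpart. The key compatibility, which I take from Deligne's Weil II together with his comparison of weights, is that the dimension of $Gr^W_w H^k_c(X(\mathbb{C}))$ equals the number of Frobenius eigenvalues on $H^k_c(X_{\overline{\mathbb{F}}_q})$ of absolute value $q^{w/2}$, for $q$ in a suitable family.

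Second, I use the Grothendieck–Lefschetz trace formula:
\[|X(\mathbb{F}_{q^s})|=\sum_k(-1)^k\tr\big(F^s\mid H^k_c(X_{\overline{\mathbb{F}}_q},\mathbb{Q}_\ell)\big).\]
Grouping the eigenvalues by weight, this gives $\sum_w c_w(s)$, where $c_w(s)$ is the alternating sum of the $s$-th powers of the weight-$w$ eigenvalues. By hypothesis the left-hand side equals $C_X(q^s)=\sum_j a_jq^{js}$ for all $s$. Two exponential sums in $s$ that agree for all $s$ have the same multisets of bases with signed multiplicities, by linear independence of characters $s\mapsto\alpha^s$. Hence, after cancellations, the virtual weight-$w$ part vanishes for odd $w$ and equals $a_j$ copies of the eigenvalue $q^j$ for $w=2j$. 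Therefore
\[\sum_k(-1)^k\dim Gr^W_{w}H^k_c=\begin{cases}a_j,& w=2j,\\ 0,& w \text{ odd}.\end{cases}\]

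Third, I must pass from weights to Hodge types $(p,q)$. Here counting alone is not enough, since it only sees $\sum_{p+q=w}\chi_c^{p,q}$. Katz's refinement handles this by running the same argument with the eigenvalues themselves, not just their absolute values. The eigenvalues are exactly $q^j$, so they are divisible as algebraic integers by $q^j$. Via Mazur-type (Newton above Hodge) inequalities, or via the étale–crystalline comparison for the spread-out family, this divisibility forces the Hodge filtration on $Gr^W_{2j}$ to be concentrated in type $(j,j)$ in the virtual sense. This yields $\chi_c^{p,q}=0$ for $p\neq q$ and $\chi_c^{j,j}=a_j$, so $e(X(\mathbb{C});u,v)=\sum_j a_j(uv)^j=C_X(uv)$. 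This establishes the equality of polynomials, and "balanced" is meant in the sense of the $e$-polynomial, as used later in the paper.

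The main obstacle is this third step. The weight comparison of the first step is standard Weil II, but recovering the Hodge numbers within a fixed weight from Frobenius data requires a genuine $p$-adic Hodge theory input. I would first try the simplest route: reduce, by stratifying into smooth pieces using Proposition~\ref{prop:e-strat} and the additivity of point counts, to smooth varieties with a good compactification over $R$. There I would apply Katz–Messing's comparison between crystalline Frobenius slopes and Hodge numbers. The point to check is that the virtual cancellation across different $k$ is compatible with this stratification.
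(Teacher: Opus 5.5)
Your first two steps are sound, but they only use the count over a single prime and its extensions. What they deliver is exactly the weight information $\sum_{p'+q'=w}\chi_c^{p',q'}$, i.e.\ $e(X(\mathbb{C});t,t)=C_X(t^2)$.

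\textbf{The gap is the third step.} Frobenius eigenvalues at one prime cannot determine Hodge types, and the inequality you invoke points the wrong way.
\begin{itemize}
\item Mazur's theorem says the Newton polygon lies \emph{on or above} the Hodge polygon, with the same endpoints. If all eigenvalues on the weight-$2j$ part equal $q^j$, the Newton polygon is the straight segment of slope $j$. That is the \emph{highest} polygon allowed (the supersingular extreme, not the ordinary one), so it imposes nothing on the Hodge numbers.
\item Katz--Messing does not help: it only identifies the crystalline and $\ell$-adic characteristic polynomials of Frobenius.
\item Any such inequality concerns actual cohomology groups. You only control alternating sums over $k$, and inequalities do not survive cancellation between different $H^k_c$.
\end{itemize}
A concrete counterexample to this strategy is the Fermat quartic K3 surface $S$. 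For $p\equiv 3\pmod 4$ its reduction is supersingular with Picard number $22$ over $\overline{\mathbb{F}}_p$. So over a suitable $\mathbb{F}_q$ Frobenius acts on $H^2$ as multiplication by $q$, and $|S(\mathbb{F}_{q^s})|=1+22q^s+q^{2s}$ for all $s$. Yet $e(S(\mathbb{C});u,v)=1+u^2+20uv+v^2+u^2v^2$. Stratifying into smooth pieces cannot rescue this, since $S$ is already smooth and projective.

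\textbf{The missing idea is to use the hypothesis at almost all primes.}
\begin{itemize}
\item For almost every $p$, the traces of $\mathrm{Frob}_p$ on the virtual $\mathrm{Gal}(\overline{\mathbb{Q}}/\mathbb{Q})$-representation $\sum_k(-1)^k[H^k_c(X_{\overline{\mathbb{Q}}},\mathbb{Q}_\ell)]$ equal $C_X(p)$. These are the traces on $\sum_j a_j[\mathbb{Q}_\ell(-j)]$. By Chebotarev density and Brauer--Nesbitt, the two virtual representations have the same semisimplification, weight by weight.
\item Now take $\ell=p$ a good prime and restrict to $\mathrm{Gal}(\overline{\mathbb{Q}}_p/\mathbb{Q}_p)$. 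Use Faltings' Hodge--Tate comparison for smooth projective varieties, transported to $\mathrm{Gr}^W_w H^k_c$ through a hyperresolution. With the normalization where $\mathbb{Q}_p(-1)$ has Hodge--Tate weight $1$, the multiplicity of Hodge--Tate weight $p'$ in the virtual weight-$w$ part is $\chi_c^{p',w-p'}$.
\item Hodge--Tate multiplicities are additive on short exact sequences, so they are defined on virtual semisimplified representations. Since $\mathbb{Q}_p(-j)$ has the single weight $j$, you get $\chi_c^{p',q'}=a_j$ if $p'=q'=j$ and $0$ otherwise. Hence $e(X(\mathbb{C});u,v)=C_X(uv)$.
\end{itemize}
This, as far as I recall, is the route of Katz's appendix, which the paper cites without reproducing.

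Your reading of ``balanced'' in the $e$-polynomial sense is the correct one. For example, $(\mathbb{P}^2\setminus E)\sqcup E$, with $E$ a smooth plane cubic, has counting polynomial $q^2+q+1$ but $h_c^{1,1,0}\neq 0$.
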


\begin{Exercise}
    Check that the number of points of $\mathbb{C}$, $\mathbb{C}^{\ast}$ and $\mathbb{P}^n_{\mathbb{C}}$ over finite fields is actually computed by the $e$-polynomial, i.e. the $e$-polynomial is a (the) counting polynomial for these varieties. Indeed, these varieties are of polynomial type. 
\end{Exercise}

In \cite[Theorem 3.5.1]{hausel2008mixed} the authors compute the $e$-polynomial of the character varieties $\mathcal{X}_{\GL_2}(\pi_1(\Sigma_g))$ with this method.
To compute the counting polynomial, it is used the character formula \cite[Proposition 2.3.2]{hausel2008mixed} counting the number of solutions of equations in finite groups. This method has been shown to be successful in \cite{mereb2015SLntwisted} for $\SL_n$ and in \cite{baraglia2017arithmetic} for $\GL_n$, $\SL_n$, $n=2,3$, simplifying the calculations. Computations for a general $\Gamma$ and $\GL_n$, $n\geq 4$ become intractable. 

Let us describe explicitly this method in a particular and simpler example, following \cite{mozgovoy2015arithmetic}, where the counting polynomial of the $\GL_n$-character variety of the free group is obtained via counting irreducible representations. 

Let $\Gamma=F_r$ be the free group in $r$ generators and let $G$ be the complex general linear group $\GL_n(\mathbb{C})$. Let $\mathbb{F}_q$ the finite field of $q$ elements, $q=p^s$, $p$ a prime number. Let 
\[\mathcal{X}_{\GL_n}(F_r)=\GL_n(\mathbb{C})^r/\!\!/ \GL_n(\mathbb{C})\] 
be the $\GL_n$-character variety of the free group in $r$ generators. 

We define the $\mathbb{Z}$-scheme
\[\mathcal{X}_{n,r}=\Spec(\mathbb{Z}[\Hom (F_r, \GL_n(\mathbb{Z}))]^{\GL_n(\mathbb{Z})})=\Spec(\mathbb{Z}[\GL_n(\mathbb{Z}))^r]^{\GL_n(\mathbb{Z})}).\]
For each $q$, denote by 
\[A_{n,r}(q):=\mathcal{X}_{n,r}(\mathbb{F}_q)\; ,\]
\[A^{\ast}_{n,r}(q):=\mathcal{X}^{\ast}_{n,r}(\mathbb{F}_q),\]
respectively, the number of representations and irreducible representations of these character varieties over $\mathbb{F}_q$. By \cite[Appendix]{hausel2008mixed} these $A_{n,r}$ and $A_{n,r}^{\ast}$ are the counting polynomials for the character varieties $\mathcal{X}_{\GL_n}(F_r)$ and $\mathcal{X}^{\ast}_{\GL_n}(F_r)$.

Let us define some operators in the power series ring $\mathbb{Q}[q][[t]]$, whose maximal ideal is $t\mathbb{Q}[q][[t]]$. Define the \textbf{Adams operator} $\Psi$ as 
\begin{equation}\label{eq:Adams}
\Psi:\mathbb{Q}[q][[t]]\rightarrow \mathbb{Q}[q][[t]]\; , \quad \Psi(q^i t^n)=\sum_{m\geq 1} \frac{q^{im} t^{nm}}{m}\; ,
\end{equation}
and extend it by $\mathbb{Q}$-linearity, whose inverse is given by 
\begin{equation}\label{eq:Adams-1}
\Psi^{-1}(q^i t^n)=\sum_{m\geq 1}\frac{\mu(m) q^{im}t^{nm}}{m}\; .
\end{equation}
Here $\mu:\mathbb{N}\rightarrow \{-1,0,1\}$ is the M\"obius function such that $\mu(m)=(-1)^k$ if $m$ is square-free and have $k$ primes in its arithmetic decomposition, and $\mu(m)=0$ otherwise. 

\begin{Exercise}
    Show that the inverse of the Adams operator defined on monomials as $\Psi(q^i t^n)=\sum_{m\geq 1} \frac{q^{im} t^{nm}}{m}$ is the operator defined on monomials as $\Psi^{-1}(q^i t^n)=\sum_{m\geq 1}\frac{\mu(m) q^{im}t^{nm}}{m}$.
\end{Exercise}

Define the \textbf{plethystic exponential} $\PExp$ by 
\begin{equation}\label{eq:PExp}
  \PExp: t\cdot \mathbb{Q}[q][[t]]\rightarrow 1+t\mathbb{Q}[q][[t]]\;, \quad   \text{where }\PExp(q^i t^n) = (1-q^i t^n)^{-1}
\end{equation}
on each monomial and extend it by $\PExp(f+g)=\PExp(f)\PExp(g)$. The inverse of this map is the \textbf{plethystic logarithm} denoted by 
\begin{equation}\label{eq:PLog}
    \PLog: 1+t\mathbb{Q}[q][[t]]\rightarrow \mathbb{Q}[q][[t]]\; .
\end{equation} 
Given an element $f(q,t)=1+\sum_{n\geq 1}f_n(x)t^n\in \mathbb{Q}[q][[t]]$, the plethystic operators relate to the Adams operator by 
\begin{equation}\label{eq:pletyhistic_Adams}
\PExp(f)=e^{\Psi(f)}\quad,\quad \PLog(f)=\Psi^{-1}(\log f)\; .
\end{equation}
Define the shift operator $S$ on $\mathbb{Q}[q][[t]]$ by
\begin{equation}\label{eq:shift}
    S (t^n) = q^{(r-1)\binom{n}{2}}t^n\; ,
\end{equation}
and define the power series 
\begin{equation}\label{eq:F}
F(t)=\sum_{n\geq 1}\big((q-1)(q^{2}-1)\ldots(q^{n}-1)\big)^{r-1}\,t^{n}.
\end{equation}

We want to compute explicitly the number of isomorphism classes of absolutely
irreducible representations of the free group in $r$ generators, in order to obtain a generating series for these quantities. 
Given that representations of the group algebra $k\Gamma=\mathbb{F}_q F_r$ are equivalent to representations of the path algebra of the quiver with one vertex and $r$ loops, we will use the theory of quiver representations in \cite{mozgovoy2009number}. 

Define the \textbf{Hall algebra} of the group algebra $\mathbb{F}_q F_r$ by 
\[H_{q,r}=\prod_{[V]} \mathbb{Q}\cdot V\; ,\]
 where the product is taken over all isomorphism classes of finite-dimensional
representations $V$ of $\mathbb{F}_q F_r$. In this Hall algebra, the grading is given by the dimension and the product is defined by 
\[[V] \cdot [W] =
\sum_{[X]}g_{V,W,X}[X]\; ,\] 
where $g_{V,W,X}$ is the number of subrepresentaions $U\subset X$ such that $U\simeq V$ and $X/U\simeq W$. This makes 
$H_{q,r}$ a $\mathbb{Z}_{\geq 0}$-graded complete
local associative unital $\mathbb{Q}$-algebra, where elements with constant
term $1$ (the class of the zero-dimensional representation) are invertible. 

Define the evaluation map
\begin{equation}
    I: H_{q,r}\rightarrow \mathbb{Q}[[t]]\;, \quad [V]\mapsto \frac{t^{\dim V}}{|\Aut(V)|}
\end{equation}
which, composed with the inverse of the shift operator (\ref{eq:shift}) gives a homomorphism of $\mathbb{Q}$-algebras $S^{-1}\circ I: H_{q,r}\rightarrow \mathbb{Q}[[t]]$ (c.f. \cite[Lemma 3.4]{mozgovoy2009number}).

Recall that $\Hom (F_r, \GL_n(\mathbb{F}_q))=(\GL_n(\mathbb{F}_q))^r$. To compute the number of points of $\GL_n(\mathbb{F}_q)$ we observe that the first row of the matrix can have whichever values except for the zero vector, then there are $q^n-1$ possibilities. For each one, the second row can take whichever value except for a multiple of the first row, then having $q^n-q$ possibilities. Given the first two rows, the third row can take whichever value except for a linear combination of the first two rows (which are $q^2$ linear combinations), then having $q^n-q^2$ possibilities. Finally we get
\begin{equation}\label{eq:GLnFq}
    |\GL_n(\mathbb{F}_q)| = \prod_{i=0}^{n-1} (q^n-q^i) = q^{\binom{n}{2}}\prod_{i=1}^n(q^i-1)\; .
\end{equation}

\begin{Exercise}
    List the elements of $\GL_n(\mathbb{F}_q)$ for lower $n$ and $q$, and check that its cardinality coincides with the value of $e(\GL_n;x)(q)$. 
\end{Exercise}

Denote by $z$ the element $z=\sum_{[V]} [V]\in H_{q,r}$, the sum of all (isomorphism classes of) representations in the Hall algebra, and observe that this element is invertible because it contains the zero-dimensional representation which is the unit in $H_{q,r}$. Then we obtain the expression
\
\begin{equation}\label{eq:Iz}
    I(z)=\sum_{n\geq 0}\frac{|\GL_n(\mathbb{F}_q)|^r}{|\GL_n(\mathbb{F}_q)|}t^n=\sum_{n\geq 0}\big( q^{\binom{n}{2}}\prod_{i=1}^n(q^i-1)\big)^{r-1} t^n=(S\circ F)(t)=S(F(t))\; ,
\end{equation}
where $S$ and $F$ are defined in (\ref{eq:shift}), (\ref{eq:F}). Given that this implies $(S^{-1}\circ I)(z)=F(t)$ and $S^{-1}\circ I$ is a homomorphism we get $(S^{-1}\circ I)(z^{-1})=F(t)^{-1}$ and 
\begin{equation}\label{eq:Iz-1}
     I(z^{-1})=S\left(F(t)^{-1}\right)\; .
\end{equation}

Let us write $z^{-1}=\sum_{[V]}\gamma_V [V]$. Observe that 
\[H_{q,r}\ni 1=z\cdot z^{-1}=\sum_{[V]}[V]\cdot \sum_{[V]}\gamma_V [V]=\sum_{[X]}\big(\sum_{U\subset X}\gamma_{U}\big)[X]\; ,\]
where the inner sum is taken over all possible subrepresentations in a isomorphism class $[X]$. Therefore $\sum_{U\subset X}\gamma_U=0$, unless $[X]$ is the zero representation. Note that if $V$ is not completely reducible then $\gamma_V=0$, otherwise it cannot cancel in the equality $1=z\cdot z^{-1}$. 
Now we apply \cite[Lemma 3.5]{mozgovoy2009number} to compute the coefficients $\gamma_V$. 

\begin{Lemma}\cite[Lemma 3.5]{mozgovoy2009number}
Let $\mathcal{S}$ be the set of isomorphism classes of irreducible representations of $\mathbb{F}_qF_r$. If $V\simeq \oplus_{[S]\in\mathcal{S}}S^{m_S}$ is completely reducible, then 
\[\gamma_{V}=\prod_{[S]\in\mathcal{S}}(-1)^{m_S}|\End(S)|^{\binom{m_S}{2}}\; .\]
\end{Lemma}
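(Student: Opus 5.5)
Since $z\cdot z^{-1}=1$ in $H_{q,r}$, I will show that the proposed element $\sum_V \gamma_V[V]$ (with $\gamma_V$ as in the statement on completely reducible $V$, and $0$ otherwise) is a right inverse of $z$. Uniqueness of inverses in the complete local algebra $H_{q,r}$ then gives the formula. Concretely, I must check that for every nonzero $X$,
\[\sum_{U\subset X}\gamma_U=0,\]
where $U$ runs over subrepresentations of $X$ with $X/U$ arbitrary, which is the coefficient extraction already displayed in the text. Only completely reducible $U$ contribute, and every completely reducible subrepresentation of $X$ lies inside the socle $\mathrm{soc}(X)$, the largest semisimple subrepresentation. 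So the sum equals the same sum computed for $\mathrm{soc}(X)$, which is nonzero whenever $X\neq 0$. It therefore suffices to treat semisimple $X=\bigoplus_S S^{n_S}$ with not all $n_S=0$.

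For semisimple $X$ I use isotypic decomposition. Any subrepresentation $U$ decomposes as $\bigoplus_S U_S$ with $U_S\subset S^{n_S}$, since $\Hom$ between distinct irreducibles vanishes. The proposed $\gamma_U$ is multiplicative over $S$, so the sum factors as a product over the $S$ with $n_S>0$:
\[\sum_{U\subset X}\gamma_U=\prod_S\Big(\sum_{U_S\subset S^{n_S}}(-1)^{m}|\End S|^{\binom{m}{2}}\Big),\]
where $m=m(U_S)$ is the multiplicity. It is enough to show that one factor vanishes.

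Fix $S$ and set $D=\End(S)$. By Schur's lemma $D$ is a division algebra; since it is finite, Wedderburn's theorem makes it a field $\mathbb{F}_{q'}$ with $q'=|\End S|$. Subrepresentations of $S^n$ isomorphic to $S^m$ correspond bijectively to $m$-dimensional $D$-subspaces of $D^n$, via $U\mapsto\Hom(S,U)\subset\Hom(S,S^n)=D^n$. The factor therefore becomes
\[\sum_{m=0}^n(-1)^m q'^{\binom m2}\binom{n}{m}_{q'},\]
which vanishes for $n\ge1$ by the $q$-binomial theorem $\prod_{i=0}^{n-1}(1+xq'^i)=\sum_m q'^{\binom m2}\binom nm_{q'}x^m$ evaluated at $x=-1$.

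I expect the main obstacle to be the bijection between subrepresentations and $D$-subspaces. I will justify it by noting that $\Hom(S,-)$ is an equivalence between semisimple $S$-isotypic modules and $D$-vector spaces. One also needs to be careful that "absolutely irreducible" is not required here, since $D$ may be a proper extension of $\mathbb{F}_q$; this is exactly why the formula uses $|\End S|$ rather than $q$.
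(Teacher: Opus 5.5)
Your proposal is correct and follows the same route as the paper. Both arguments count the semisimple subrepresentations isotypic component by isotypic component via Grassmannians over the field $\End(S)$, factor the sum into a product over $[S]\in\mathcal{S}$, and kill each factor using $\sum_{a=0}^{m}(-1)^a q^{\binom{a}{2}}\genfrac{[}{]}{0pt}{}{m}{a}_q=0$ for $m\geq 1$; you obtain this from the $q$-binomial theorem at $x=-1$, where the paper uses induction.

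Your socle reduction, together with the appeal to uniqueness of inverses, actually supplies the justification the paper only asserts, namely that $\gamma_V=0$ when $V$ is not completely reducible. One small point: with the paper's convention for $g_{V,W,X}$, the identity $\sum_{U\subset X}\gamma_U=0$ is the coefficient of $[X]$ in $z^{-1}\cdot z$, not in $z\cdot z^{-1}$. So it makes your element a left inverse rather than a right inverse. This is harmless, since any one-sided inverse of the invertible element $z$ equals $z^{-1}$.
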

\begin{proof}
  Let $V\simeq \oplus_{[S]\in\mathcal{S}}S^{m_S}$ be a completely reducible representation. A polystable sub-representation $U\subset V$ (otherwise $\gamma_U=0$) is of the form $U\simeq \oplus_{[S]\in \mathcal{S}} S^{a_S}$, where $0\leq a_S\leq m_S$ for each $[S]\in\mathcal{S}$. Fixing the tuple $(a_S)_{[S]\in\mathcal{S}}$, the number of sub-representations $U\subset V$ with that isotypical decomposition is the cardinality of this product of Grassmanninans:
  \begin{equation}\label{eq:lemma}
  \#\big(\prod_{[S]\in\mathcal{S}}\Gr_{a_S}\big( \End(S)^{m_S}\big)\big)=\prod_{[S]\in\mathcal{S}}\genfrac{[}{]}{0pt}{}{m_S}{a_S}_{|\End(S)|}\; ,
  \end{equation}
  where $\genfrac{[}{]}{0pt}{}{m}{n}_q=\frac{[m]_q^!}{[n]_q^! [m-n]_q^!}$, and the quantum factorials are given by $[n]_q^!=\prod_{i=1}^n[i]_q$ with $[i]_q=\frac{q^i-1}{q-1}$. 
  Then, let us compute $\sum_{U\subset V}\gamma_{U}$ which is the sum, over all possible tuples $(a_S)_{[S]\in\mathcal{S}}$, of the number of sub-representations (\ref{eq:lemma}) times the coefficient $\gamma_U$ of the statement of the Lemma: 
  \[\sum_{U\subset V}\gamma_{U}=\sum_{a_S\leq m_S}  \prod_{[S]\in\mathcal{S}}\genfrac{[}{]}{0pt}{}{m_S}{a_S}_{|\End(S)| }(-1)^{a_S}|\End(S)|^{\binom{a_S}{2}}=\prod_{[S]\in\mathcal{S}}\sum_{a=0}^{m_S} \genfrac{[}{]}{0pt}{}{m_S}{a}_{|\End(S)| }(-1)^{a}|\End(S)|^{\binom{a}{2}}\]
Showing (by induction) that 
$\sum_{a=0}^{m} \genfrac{[}{]}{0pt}{}{m}{a}_{q }(-1)^{a}q^{\binom{a}{2}}=0$ if $m\geq 1$, we obtain that $\sum_{U\subset V}\gamma_{U}=0$, completing the proof. 
\end{proof}

Then we can compute $I(z^{-1})$:
\[I(z^{-1})=I\big( \sum_{[V]}\gamma_V [V]\big) = \sum_{[V]}\frac{\gamma_V}{|\Aut V|} t^{\dim V}=\]
\[\sum_{(m_S)_{S\in\mathcal{S}}} \prod_{[S]\in\mathcal{S}} \frac{(-1)^{m_S} |\End S|^{\binom{m_S}{2}}}{|\Aut \bigoplus_{[S]\in \mathcal{S}} S^{m_S}|} t^{\sum_{[S]\in \mathcal{S}}m_S \dim S}=\]
\[\sum_{(m_S)_{S\in\mathcal{S}}} \prod_{[S]\in\mathcal{S}} \left(\frac{(-1)^{m_S} |\End S|^{\binom{m_S}{2}}}{|\GL_{m_S}(\End S)|} t^{\sum_{[S]\in \mathcal{S}}m_S \dim S}\right)=\]
\begin{equation}\label{eq:aux1}
    \prod_{[S]\in\mathcal{S}} \left(\sum_{m\geq 0}\frac{(-1)^{m} |\End S|^{\binom{m}{2}}}{|\GL_{m}(\End S)|} t^{\sum_{[S]\in \mathcal{S}}m \dim S}\right)\; .
\end{equation}
Observe that, for each irreducible representation $S\in\mathcal{S}$, we have $|\End S|=q^b$, where $b=\dim_{\mathbb{F}_q}\End S$. Then, (\ref{eq:aux1}) is equal to 
\begin{equation}\label{eq:aux2}
    \prod_{[S]\in\mathcal{S}} \left(\sum_{m\geq 0}\frac{(-1)^{m} (q^b)^{\binom{m}{2}}}{|\GL_{m}(\mathbb{F}_{q^b})|} t^{\sum_{[S]\in \mathcal{S}}m \dim S}\right)\; .
\end{equation}
By applying (\ref{eq:GLnFq}) to the finite field of $q^b$ elements we obtain that (\ref{eq:aux2}) equals
\begin{equation}\label{eq:aux3}
    \prod_{[S]\in\mathcal{S}} \left(\sum_{m\geq 0}\left(\prod_{i=1}^m (1-(q^b)^i)^{-1}\right) t^{\sum_{[S]\in \mathcal{S}}m \dim S}\right)\; .
\end{equation}
Defining $s_{\alpha, b}(q)$ the number of isomorphism classes of irreducible representations $S$ of $\mathbb{F}_q F_r$ with dimension $\alpha$ and $\dim_{\mathbb{F}_q}\End S =b$, we get that (\ref{eq:aux3}) is equal to 
\begin{equation}\label{eq:aux4}
\prod_{\alpha\in\mathbb{N}^I, b\geq 1}\left( \sum_{m\geq 0} \prod_{i=1}^m (1-q^{bi})^{-1}\right)^{s_{\alpha, b}(q)} t^{m\alpha}=\prod_{\alpha\in\mathbb{N}^I, b\geq 1}\PExp\left(\frac{t^{\alpha}}{1-q^{b}}\right)^{s_{\alpha, b}(q)} \; .
\end{equation}
To finish, we need to make use of certain arithmetic functions (see \cite[Lemma 2.3, Corollary 3.3, Theorem 4.2]{mozgovoy2009number}) to conclude that 
\begin{equation}\label{eq:aux5}
I(z^{-1})=\PExp\left(\frac{1}{1-q}\sum_{n\geq 1} A_{n,r}^{\ast}(q) t^n\right)\; .
\end{equation}
Relating with (\ref{eq:Iz-1}), we finally obtain a relationship for the generating series whose coefficients give the number of irreducible representations of the free group in $r$ generators over $\mathbb{F}_q$.
\begin{Proposition}\cite[Theorem 2.5]{mozgovoy2015arithmetic}\label{prop:generating}
The generating series of irreducible representations of the free group in $r$ generators over $\mathbb{F}_q$ can be computed from the plethystic operators as
    \begin{equation}\label{eq:series_Airr}
   \sum_{n\geq 1} A_{n,r}^{\ast}(q) t^n= (1-q)\PLog\left[S\left(F(t)^{-1}\right)\right] \; .
\end{equation}
\end{Proposition}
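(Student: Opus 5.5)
The plan is to compute the same element $I(z^{-1})$ of $\mathbb{Q}[q][[t]]$ in two independent ways, equate the results, and then invert the plethystic exponential.

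\textbf{Step 1: the algebraic side.} Since $S^{-1}\circ I$ is a homomorphism of $\mathbb{Q}$-algebras (\cite[Lemma 3.4]{mozgovoy2009number}), we have (\ref{eq:Iz}), $I(z)=S(F(t))$. Applying $S^{-1}\circ I$ to $z\cdot z^{-1}=1$ gives (\ref{eq:Iz-1}):
\[I(z^{-1})=S\left(F(t)^{-1}\right).\]
This inversion is legitimate because $F(t)$ has constant term $1$ once we include the $n=0$ term. The summation in (\ref{eq:F}) should be read that way, matching (\ref{eq:Iz}).

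\textbf{Step 2: the combinatorial side.} By the Lemma, $z^{-1}$ is supported on completely reducible representations, with coefficients $\gamma_V=\prod_{[S]}(-1)^{m_S}|\End S|^{\binom{m_S}{2}}$. Next we use $\Aut\left(\bigoplus_{[S]} S^{m_S}\right)\simeq\prod_{[S]}\GL_{m_S}(\End S)$, which holds because each $\End S$ is a finite division algebra, hence a field $\mathbb{F}_{q^b}$. Then (\ref{eq:GLnFq}) factors $I(z^{-1})$ as in (\ref{eq:aux1})--(\ref{eq:aux3}). The $q$-binomial (Euler) identity
\[\sum_{m\ge0}\frac{t^m}{\prod_{i=1}^m(1-Q^i)}=\prod_{j\ge0}(1-Q^jt)^{-1}=\PExp\left(\frac{t}{1-Q}\right)\]
with $Q=q^b$ gives (\ref{eq:aux4}).

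\textbf{Step 3: the main obstacle.} We must pass from the product over pairs $(\alpha,b)$, weighted by the numbers $s_{\alpha,b}(q)$ of irreducible representations with given dimension and endomorphism field, to (\ref{eq:aux5}), which involves only the absolutely irreducible counts $A^\ast_{n,r}(q)$. The approach I would take is Galois descent. An irreducible $S$ with $\End S=\mathbb{F}_{q^b}$ has dimension $\alpha=bn$, and after extending scalars to $\mathbb{F}_{q^b}$ it splits into $b$ Galois-conjugate absolutely irreducible representations of dimension $n$. Conversely, a Frobenius orbit of length exactly $b$ of absolutely irreducible representations over $\mathbb{F}_{q^b}$ descends to such an $S$. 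This gives
\[s_{bn,b}(q)=\frac1b\sum_{d\mid b}\mu(b/d)\,A^\ast_{n,r}(q^d).\]
Taking logarithms, each factor contributes $s_{bn,b}\log\PExp\left(\frac{t^{bn}}{1-q^b}\right)=s_{bn,b}\Psi\left(\frac{t^{bn}}{1-q^b}\right)$. Substituting the Möbius expression and regrouping, the sum collapses to $\Psi\left(\frac{1}{1-q}\sum_n A^\ast_{n,r}(q)t^n\right)$. This rests on the arithmetic identity $\sum_b \frac1b\sum_{d\mid b}\mu(b/d)\,f(q^d)\cdot(\cdots)$, which is the content of \cite[Lemma 2.3, Corollary 3.3]{mozgovoy2009number}. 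I expect this bookkeeping with $\Psi$ to be the delicate part: one must check that the Adams operation acts on $A^\ast_{n,r}(q)$ by $q\mapsto q^m$, which uses that $A^\ast_{n,r}$ is polynomial in $q$. Exponentiating yields (\ref{eq:aux5}).

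\textbf{Step 4: conclusion.} Equating Steps 1 and 3 gives
\[\PExp\left(\frac{1}{1-q}\sum_{n\ge1}A^\ast_{n,r}(q)t^n\right)=S\left(F(t)^{-1}\right).\]
The right-hand side lies in $1+t\mathbb{Q}[q][[t]]$, so we may apply $\PLog$, the inverse of $\PExp$. Multiplying by $(1-q)$ then gives (\ref{eq:series_Airr}).
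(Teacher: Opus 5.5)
Your proposal is correct and follows the same route as the paper: you compute $I(z^{-1})$ once as $S(F(t)^{-1})$ via the homomorphism $S^{-1}\circ I$, and once as the product (\ref{eq:aux1})--(\ref{eq:aux4}) over irreducibles, and your Galois-descent and M\"obius computation of $s_{bn,b}(q)$ spells out the passage to (\ref{eq:aux5}) that the paper delegates to \cite{mozgovoy2009number}. One refinement: you need not assume $A^{\ast}_{n,r}$ is polynomial (this is usually deduced from the very identity being proved), since comparing $t^n$-coefficients of the logarithms by induction on $n$, simultaneously for all prime powers $q$, already justifies letting $\Psi$ act on the counts by $q\mapsto q^{d}$.
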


\begin{Exercise}
    Search for material in the literature to complete the details to get expressions (\ref{eq:aux4}) and (\ref{eq:aux5}), towards the generating series of the number of irreducible representations of the free group into $\GL_n$, for $n\geq 1$. 
\end{Exercise}

\begin{Remark}\label{rem:Katz_over_Airr}
In \cite[Theorem 2.5]{mozgovoy2015arithmetic} it is further shown that 
\begin{equation}\label{eq:generating_series}
\sum_{n\geq 0} A_{n,r}(q) t^n=\PExp\left( \sum_{n\geq 1} A_{n,r}^{\ast}(q) t^n\right)\; , 
\end{equation}
which relates the generating series of irreducible representations to the generating series of all representations. Moreover, the coefficients $A_{n,r}(q)$ and $A_{n,r}^{\ast}(q)$ are in fact polynomials in $q$ with integer coefficients. By Katz's result Theorem \ref{thm:Katz}, these are the $e$-polynomials of the $\GL_n$-character varieties of the free group, which are balanced varieties and their polynomials are $1$-variable polynomials. 

In \cite[Theorem 4.10]{florentino2023generating}, this result is generalized to $\GL_n$-character varieties of an arbitrary finitely generated group $\Gamma$, whose character varieties are not necessarily of balanced type, then their $e$-polynomials are, in principle, $2$-variable polynomials. 
\end{Remark}

\begin{Exercise}
Use \cite[Lemma 5]{mozgovoy2007computational} and  \cite[Theorem 2.5]{mozgovoy2015arithmetic} to prove the relationship in Remark \ref{rem:Katz_over_Airr}, 
\[
\sum_{n\geq 0} A_{n,r}(q) t^n=\PExp\left( \sum_{n\geq 1} A_{n,r}^{\ast}(q) t^n\right)\; ,
\]
relating the generating series of irreducible representations to the generating series of all representations. Then we have the generating series for the coefficients $A_{n,r}(q)$:
\[
   \sum_{n\geq 1} A_{n,r}(q) t^n= \PExp\left[(1-q)\PLog[S(F(t)^{-1})] \right]\; .
\]
\end{Exercise}

\subsection{Stratifications of \texorpdfstring{$\GL_n$}{GLn}-character varieties by partition type}\label{ssec:stratification}

The seminal work \cite{logares2013hodge} computes the mixed Hodge polynomials and $e$-polynomials of $\SL_2$-character varieties of the fundamental group of a Riemann surface of small genus, $g=1,2$. The method used is purely geometrical, opposed to the arithmetic ideas described in the previous subsection, and relies on decomposing the character variety into Jordan types of matrices, given that the conjugacy action defining the character varieties respect this decomposition. For each Jordan type, the $e$-polynomial of that strata is obtained by a careful study of the fibrations appearing there and the monodromy action (c.f. Theorem \ref{thm:fibrations}). The $e$-polynomial of the whole character variety results as the sum of each piece, by Proposition \ref{prop:e-strat}. This strategy has produced many results, for example \cite{martinez2016thesis} for general genus $g$ and \cite{lawton2016SL3} for $\SL_3$. 

In this section we are going to describe a similar but slightly different technique to stratify $\GL_n$-character varieties into Jordan or semi-simplicity types, which we will call partition types. This is described in \cite{florentino2023generating}. 

\begin{Definition}\label{def:partitions}
Denote by $\mathcal{P}_n$ the \textbf{partitions} of the natural number $n$, whose elements are  
\begin{equation}\label{eq:partition}
    [k]:=[1^{k_{1}} 2^{k_{2}} \cdots n^{k_{n}}]\in \mathcal{P}_{n}\; ,
\end{equation}
where $\sum_{j=1}^{n} j\cdot k_{j} = n$. Let $|[k]|=\sum_{j=1}^{n} k_{j}$ be its \textbf{length}, the number of elements counted with its multiplicity. 
\end{Definition}

Recall that $\mathcal{R}_{\GL_n}(\Gamma)$ denotes the set of representations $\rho:\Gamma\rightarrow \GL_n$.

\begin{Definition}\label{def:kpolystable}
Denote by $\mathcal{R}_{\GL_n}^{[k]}(\Gamma)$
the \textbf{$[k]$-polystable representations} which are those representations $\rho$ conjugated to a direct sum $\bigoplus_{j=1}^{n} \rho_{j}$, where each summand $\rho_{j}\in \mathcal{R}_{\GL_j^{\oplus k_{j}}}^{\ast}(\Gamma)$ is an irreducible representation of the corresponding size.
\end{Definition}

Observe that 
\[\mathcal{R}_{\GL_n}(\Gamma)=\bigsqcup_{[k]\in\mathcal{P}_n} \mathcal{R}^{[k]}_{\GL_n}(\Gamma)\; .\] 
Since the stabilizer dimension of a representation is invariant by conjugation, the action of $\GL_n$ on $\mathcal{R}_{\GL_n}(\Gamma)$ respects each $\mathcal{R}_{\GL_n}^{[k]}(\Gamma)$, then we can define 
the \textbf{$[k]$-stratum} as 
\begin{equation}\label{eq:kstratum}
    \mathcal{X}_{\GL_n}^{[k]}(\Gamma) := \mathcal{R}_{\GL_n}^{[k]}(\Gamma)/\!\!/\GL_{n}\; .
\end{equation}
Observe that the \textbf{irreducible} stratum of irreducible representations corresponds to the trivial partition $[n]$, i.e. $\mathcal{X}_{\GL_n}^{[n]}(\Gamma)=\mathcal{X}_{\GL_n}^{\ast}(\Gamma)$. 

\begin{Theorem}\cite[Proposition 4.3]{florentino2023generating}\label{thm:stratGLn}
There exists a locally closed stratification 
\[\mathcal{X}_{\GL_{n}}(\Gamma) = \bigsqcup_{[k] \in \mathcal{P}_{n}} \mathcal{X}_{\GL_n}^{[k]}(\Gamma)\; .\]
\end{Theorem}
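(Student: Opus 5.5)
We will prove the statement by identifying the character variety with the space of isomorphism classes of semisimple representations and reading off the strata from a semicontinuity property. By standard affine GIT for the reductive group $\GL_n$ acting on the affine variety $\mathcal{R}_{\GL_n}(\Gamma)$, every fibre of the quotient map $\pi:\mathcal{R}_{\GL_n}(\Gamma)\to\mathcal{X}_{\GL_n}(\Gamma)$ contains a unique closed orbit. For conjugation on representations, the closed orbits are exactly the orbits of semisimple (polystable) representations: the semisimplification of $\rho$ lies in $\overline{G\cdot\rho}$, obtained by degenerating along a one-parameter subgroup adapted to a Jordan--Hölder filtration, and conversely polystable orbits are closed. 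Hence points of $\mathcal{X}_{\GL_n}(\Gamma)$ correspond bijectively to isomorphism classes of semisimple $\rho\simeq\bigoplus_j\rho_j$. Each such class has a well-defined partition type $[k]\in\mathcal{P}_n$, with $k_j$ the number of irreducible summands of dimension $j$, by Krull--Schmidt/Jordan--Hölder uniqueness. This gives a set-theoretic decomposition $\mathcal{X}_{\GL_n}(\Gamma)=\bigsqcup_{[k]}\mathcal{X}^{[k]}_{\GL_n}(\Gamma)$. We also check that $\mathcal{R}^{[k]}_{\GL_n}(\Gamma)$ is the $G$-saturated set of $[k]$-polystable points, so its image is the corresponding set of closed orbits, and the GIT quotient in \eqref{eq:kstratum} is that image.

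For local closedness we order $\mathcal{P}_n$ by refinement: $[k']\preceq[k]$ if $[k']$ is obtained by splitting parts of $[k]$. The key claim is that for each $[k]$ the union $\overline{\mathcal{X}}^{[k]}:=\bigsqcup_{[k']\preceq[k]}\mathcal{X}^{[k']}_{\GL_n}(\Gamma)$ is Zariski closed. Then $\mathcal{X}^{[k]}_{\GL_n}(\Gamma)=\overline{\mathcal{X}}^{[k]}\setminus\bigcup_{[k']\prec[k]}\overline{\mathcal{X}}^{[k']}$ is locally closed, and the strata give a stratification. We will prove closedness in the representation variety and push it down. Let $Z_{[k]}\subset\mathcal{R}_{\GL_n}(\Gamma)$ be the set of $\rho$ admitting a $\rho$-invariant flag of subspaces whose successive dimensions form a composition of $n$ that rearranges to $[k]$. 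This set is the image of the closed incidence variety $\{(\rho,F)\in\mathcal{R}\times \mathcal{F}l_{d}: F\ \rho\text{-invariant}\}$ under projection. Since the partial flag variety $\mathcal{F}l_d$ is projective, the projection is proper, and taking the finite union over orderings $d$ shows that $Z_{[k]}$ is closed. It is also $G$-stable, and $\rho\in Z_{[k]}$ exactly when the semisimplification of $\rho$ has type refining $[k]$: the graded pieces of the flag may be further reducible, which only refines the type. So $Z_{[k]}=\pi^{-1}(\overline{\mathcal{X}}^{[k]})$. Because a good quotient maps closed $G$-stable sets to closed sets, $\overline{\mathcal{X}}^{[k]}=\pi(Z_{[k]})$ is closed.

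The main obstacle is the identification $Z_{[k]}=\pi^{-1}(\overline{\mathcal{X}}^{[k]})$, together with the bookkeeping over refinements: we must ensure that having an invariant flag with graded dimensions $d$ is equivalent to the semisimple type refining $[k]$. This follows from Jordan--Hölder by refining the flag to a composition series, but it needs care. Finally, to give the strata a scheme structure compatible with \eqref{eq:kstratum}, we take the reduced locally closed subvariety structure and note that the restriction of $\pi$ to the saturated locally closed set $\mathcal{R}^{[k]}_{\GL_n}(\Gamma)$ is still a good quotient. This is what is needed afterwards to apply Proposition \ref{prop:e-strat} stratum by stratum.
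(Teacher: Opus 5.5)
Your strategy works, but the identity $Z_{[k]}=\pi^{-1}(\overline{\mathcal{X}}^{[k]})$, which you single out as the main obstacle, is false. The direction you attribute to Jordan--H\"older is exactly the one that fails. An invariant flag only lets you merge composition factors that are \emph{consecutive} in some composition series, whereas refinement of partitions allows arbitrary groupings of parts.

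For a counterexample, take $\Gamma=F_r$ with $r\geq 2$ and $n=4$. Let $\rho$ be given by block upper triangular matrices with diagonal blocks of sizes $1,2,1$, where the $2\times 2$ blocks form an irreducible representation. Choose the off-diagonal blocks generically, so that both length-two subquotients are non-split extensions; since $F_r$ has no relations, this is a representation. A short case check shows that its only invariant subspaces have dimensions $0,1,3,4$, so $\rho\notin Z_{[2^2]}$. Yet its semisimplification has type $[1^2\,2]\preceq[2^2]$, so $\pi(\rho)\in\overline{\mathcal{X}}^{[2^2]}$.

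Fortunately you never need the identity. The valid direction gives $\pi(Z_{[k]})\subseteq\overline{\mathcal{X}}^{[k]}$. Conversely, every point of $\overline{\mathcal{X}}^{[k]}$ is the image of a polystable $\bigoplus_i S_i$, whose summands can be stacked in any order, in particular grouped according to the refinement; so it lies in $Z_{[k]}$. Hence $\overline{\mathcal{X}}^{[k]}=\pi(Z_{[k]})$, which is closed by property (3) of Definition \ref{def:quotients}, because $Z_{[k]}$ is closed and $G$-stable.

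A related point: with the paper's definition, $\mathcal{R}^{[k]}_{\GL_n}(\Gamma)$ consists of polystable representations only and is in general neither $\pi$-saturated nor locally closed. For your final step, use $\pi^{-1}(\mathcal{X}^{[k]}_{\GL_n}(\Gamma))$ instead.

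With that repair, your argument is a genuinely different and more complete route than the paper's. The paper only sketches the proof: the irreducible locus is the open stable locus on which the quotient is geometric, so the reducible locus is closed, and one ``repeats the argument for partitions of greater length'', identifying strata through stabilizer dimension. You instead order $\mathcal{P}_n$ by refinement and realize each down-set $\overline{\mathcal{X}}^{[k]}$ directly as the image of a closed incidence variety over partial flag varieties, using properness of $\mathcal{F}l_d$ and the closed-image property of good quotients.

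This buys two things. First, you get an explicit closure relation: the closure of a stratum lies in the union of strata of finer type. Second, you cleanly separate incomparable types of the same length, such as $[1\,3]$ and $[2^2]$, which an induction on length does not address. Stabilizer dimension also fails to determine the type when isomorphic summands repeat, as in $S\oplus S$ versus $S\oplus S'$.

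Moreover, your $Z_{[k]}$ is exactly the union of the sets $G\cdot\mathcal{R}_{P_I}(\Gamma)$ over standard parabolics whose Levi is conjugate to $L_{[k]}$. So your argument is essentially the $\GL_n$ case of the parabolic viewpoint of Section 5, with $G/P_I$ in place of $\mathcal{F}l_d$. The paper's sketch is shorter and stays closer to the stable/polystable picture, but it leaves the ordering of the strata implicit.
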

\begin{proof}
  Observe that the stable locus $\mathcal{R}_{\GL_n}^{[n]}(\Gamma)$, i.e. the irreducible representations, is an open subset of $\mathcal{R}_{\GL_n}(\Gamma)$ and  $\mathcal{R}_{\GL_n}(\Gamma)/\GL_n$ is a geometric quotient inside the GIT quotient. Then, its complement, the reducible representations, is a closed subset. Repeating the argument for partitions of greater length we obtain a locally closed stratification where each $[k]$-strata is an irreducible component of the representations sharing a stabilizer of the same dimension.   
\end{proof}

Thanks to Theorem \ref{thm:stratGLn}, the computation of the $e$-polynomial of a $\GL_n$-character variety reduces to the computation for each strata, by Proposition \ref{prop:e-strat}. But this still forces us to understand the monodromy action of GIT quotients by $\GL_n$, which is an infinite group. Let us further reduce this computations to understand the monodromy under finite quotients.

\begin{Definition}\label{def:kLevi}
Let $[k]\in \mathcal{P}_n$ be a partition and let $\mathcal{X}_{\GL_n}^{[k]}(\Gamma)$ be the corresponding stratum of the $\GL_n$-character variety of $\Gamma$.
Define the \textbf{$[k]$-Levi} by 
\begin{equation}\label{eq:kLevi}
    L_{[k]}:=\GL_{1}^{k_{1}}\times \GL_{2}^{k_{2}}\times \cdots \times \GL_{n}^{k_{n}}\subset \GL_{n}\; ,
\end{equation}
the \textbf{$[k]$-symmetric group}
\begin{equation}\label{eq:kS}
    S_{[k]}:=S_{k_{1}}\times S_{k_{2}}\times \cdots \times S_{k_{n}}\subset S_{n}\; ,
\end{equation} and the \textbf{$[k]$-normalizer} 
\begin{equation}\label{eq:kN}
    N_{[k]}=L_{[k]}\rtimes S_{[k]}\; .
\end{equation}  
\end{Definition} 
The conjugation action of $\GL_n$ under the stratum $\mathcal{X}_{\GL_n}^{[k]}(\Gamma)$ reduces to the action of the $[k]$-normalizer (\ref{eq:kN}), then 
notice that
\begin{equation}\label{eq:K-strata}
\mathcal{X}_{\GL_n}^{[k]}(\Gamma)\simeq \mathcal{R}_{\GL_n}^{[k]}(\Gamma)/\!\!/N_{[k]} =\mathcal{R}_{\GL_n}^{[k]}(\Gamma)/\!\!/\big(L_{[k]}\rtimes S_{[k]}\big) \simeq \prod_{j=1}^n\left(\mathcal{X}_{\GL_j}^{\ast}(\Gamma)^{\times k_j}\right)/S_{k_j}\; ,
\end{equation}
where the last isomorphism comes from including the action of each block in the $[k]$-Levi (\ref{eq:kLevi}), into the corresponding irreducible representation to give an irreducible character variety. Therefore,  we are describing each stratum as a product of irreducible character varieties acted by finite symmetric groups. 

\begin{Proposition}\label{prop:decomp_epol}
The $e$-polynomial of the $\GL_n$-character variety of $\Gamma$ is computed additively on the strata as
\[e\left(\mathcal{X}_{\GL_{n}}(\Gamma)\right) = \sum_{[k] \in \mathcal{P}_{n}} e\left(\mathcal{X}_{\GL_n}^{[k]}(\Gamma)\right)=\sum_{[k] \in \mathcal{P}_{n}} e\left(\prod_{j=1}^n\left(\mathcal{X}_{\GL_j}^{\ast}(\Gamma)^{\times k_j}\right)/S_{k_j}\right)\; .\]  
\end{Proposition}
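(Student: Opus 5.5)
The plan is to combine the locally closed stratification of Theorem \ref{thm:stratGLn} with the additivity of $e$-polynomials (Proposition \ref{prop:e-strat}), and then replace each stratum by its description (\ref{eq:K-strata}) as a finite quotient of products of irreducible character varieties.

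\textbf{Step 1: additivity over the strata.} Theorem \ref{thm:stratGLn} gives a decomposition of $\mathcal{X}_{\GL_n}(\Gamma)$ into finitely many locally closed pieces $\mathcal{X}^{[k]}_{\GL_n}(\Gamma)$, indexed by $\mathcal{P}_n$. Order the partitions by length, or equivalently by the dimension of the generic stabilizer. For each $m$, the union of the strata with length $\geq m$ is closed, since the stabilizer dimension is upper semicontinuous. Its complement within the union of strata of length $\geq m-1$ is then open. Applying Proposition \ref{prop:e-strat} inductively to these closed subvarieties peels off one stratum at a time. Each time we subtract a closed union of strata and use $e(X)=e(Z)+e(X\setminus Z)$, which yields the first equality $e(\mathcal{X}_{\GL_n}(\Gamma))=\sum_{[k]}e(\mathcal{X}^{[k]}_{\GL_n}(\Gamma))$.

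\textbf{Step 2: identify each stratum.} This rests on the isomorphism (\ref{eq:K-strata}). A $[k]$-polystable representation is conjugate into the Levi $L_{[k]}$ as a block sum of irreducibles. Two such block sums are $\GL_n$-conjugate if and only if they differ by conjugation inside the blocks and a permutation of isomorphic-size blocks. This is the uniqueness of the isotypic decomposition, a Krull--Schmidt/Jordan--H\"older argument for semisimple modules. Hence the $\GL_n$-orbits in the stratum correspond bijectively to $N_{[k]}=L_{[k]}\rtimes S_{[k]}$-orbits on the block-diagonal locus.

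Quotienting first by the connected part $L_{[k]}=\prod_j \GL_j^{k_j}$ acts factorwise and gives $\prod_j \mathcal{X}^{\ast}_{\GL_j}(\Gamma)^{k_j}$. Here we must restrict to the open locus where the irreducible summands of equal size are pairwise non-isomorphic or, more carefully, allow repetitions: repeated summands are exactly what the symmetric product quotient records. The remaining finite group $S_{[k]}$ then acts by permuting factors, giving $\prod_j \mathcal{X}^\ast_{\GL_j}(\Gamma)^{k_j}/S_{k_j}$. Substituting into Step 1 gives the second equality.

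\textbf{Main obstacle.} The hardest step is to upgrade this bijection on points to an isomorphism of varieties, or at least to an equality of $e$-polynomials. For that, a bijective morphism of complex varieties suffices. Such a morphism induces equal classes after stratifying into pieces where it is an isomorphism: on a normal target a bijective morphism is birational, hence an isomorphism on an open dense set, and one inducts on dimension. So the plan is the following.
\begin{itemize}
\item Construct the natural morphism $\prod_j \mathcal{X}^\ast_{\GL_j}(\Gamma)^{k_j}\to \mathcal{X}^{[k]}_{\GL_n}(\Gamma)$ given by block sum. It is $S_{[k]}$-invariant, so it descends to the finite quotient by the universal property of GIT quotients.
\item Check bijectivity on points using the Step 2 argument.
\item Invoke that $e$ only depends on the class in $\mathrm{KVar}$ modulo such piecewise isomorphisms (Remark \ref{rem:motives}).
\end{itemize}
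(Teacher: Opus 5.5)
Your proposal is correct and follows the same route as the paper's proof. Both combine the locally closed stratification of Theorem \ref{thm:stratGLn}, additivity of $e$-polynomials via Proposition \ref{prop:e-strat}, and the identification (\ref{eq:K-strata}) of each stratum with $\prod_{j}\mathcal{X}^{\ast}_{\GL_j}(\Gamma)^{\times k_j}/S_{k_j}$. The paper simply asserts that identification, while you justify the equality of $e$-polynomials through a bijective block-sum morphism, which in characteristic zero is a piecewise isomorphism; that justification is sound.

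One small correction in Step 1: the union of strata of length $\geq m$ is closed because limits of semisimple representations only refine the partition type. It does not follow from semicontinuity of stabilizer dimension, since at a polystable point $\bigoplus_S S^{m_S}$ the stabilizer has dimension $\sum_S m_S^2$, which depends on multiplicities and not on the length alone.
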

\begin{proof}
Using the stratification in Theorem \ref{thm:stratGLn} and the additivity of the $e$-polynomial under locally closed subsets in Proposition \ref{prop:e-strat}, we can compute the $e$-polynomial of the $\GL_n$-character variety of $\Gamma$ as
\begin{eqnarray*}
e\left(\mathcal{X}_{\GL_{n}}(\Gamma)\right) = \sum_{[k] \in \mathcal{P}_{n}} e\left(\mathcal{X}_{\GL_n}^{[k]}(\Gamma)\right)=\sum_{[k] \in \mathcal{P}_{n}} e\left(\mathcal{R}_{\GL_n}^{[k]}(\Gamma)/\!\!/N_{[k]} \right)=\\
\sum_{[k] \in \mathcal{P}_{n}} e\left(\mathcal{R}_{\GL_n}^{[k]}(\Gamma)/\!\!/\big(L_{[k]}\rtimes S_{[k]}\big)\right)=
\sum_{[k] \in \mathcal{P}_{n}} e\left(\prod_{j=1}^n\left(\mathcal{X}_{\GL_j}^{\ast}(\Gamma)^{\times k_j}\right)/S_{k_j}\right)\; . 
\end{eqnarray*}    
\end{proof}

\begin{Remark}
Note that, in the expression of Proposition \ref{prop:decomp_epol} we cannot compute each $e$-polynomial in the sum by partition types, just by multiplying the $e$-polynomials of each irreducible character variety in the product. This is due to the fact that the action of $S_{[k]}$ is not diagonal in the product and permutes representations in the $k_j$ irreducible $\GL_j$-character varieties of the same $j$. 
\end{Remark}

We conclude the section with an example of a stratification of a character variety. 

\begin{Example}\label{ex:GL10}
Let us describe one of the strata in the stratification of the $\GL_{10}$-character variety of a general group $\Gamma$.

Consider the partition $[k]=[1^3\; 2^2\;3]\in \mathcal{P}_{10}$, where $1\cdot 3 + 2\cdot 2+3=10$. The 
\textbf{$[k]$-polystable representations} in $\mathcal{R}_{\GL_{10}}^{[k]}(\Gamma)$ are those representations $\rho$ such that the image of all generators of $\Gamma$ are simultaneously conjugated to a matrix of this shape:     
\[\left(\begin{array}{cccccccccc|}
        \multicolumn{1}{|c}{\ast} & \ast & \ast & \ast & \ast & \ast & \ast & \ast & \ast & \ast \\\cline{1-1}
         \multicolumn{1}{c|}{} & \ast & \ast & \ast & \ast & \ast & \ast & \ast & \ast & \ast \\\cline{2-2}
          & \multicolumn{1}{c|}{}  & \ast & \ast & \ast & \ast & \ast & \ast & \ast & \ast \\\cline{3-3}
         & & \multicolumn{1}{c|}{} & \ast & \ast & \ast & \ast & \ast & \ast & \ast \\
         & & \multicolumn{1}{c|}{} & \ast & \ast & \ast & \ast & \ast & \ast & \ast \\\cline{4-5}
         & & & & \multicolumn{1}{c|}{}& \ast & \ast & \ast & \ast & \ast \\
         & & & & \multicolumn{1}{c|}{}& \ast & \ast & \ast & \ast & \ast \\\cline{6-7}
         & & & & &  &  \multicolumn{1}{c|}{} & \ast & \ast & \ast \\
         & & & & &  &  \multicolumn{1}{c|}{} & \ast & \ast & \ast \\
         & & & & &  &  \multicolumn{1}{c|}{} & \ast & \ast & \ast \\\cline{8-10}
    \end{array}\right)\]
The associated \textbf{$[k]$-Levi} are matrices of this form:
\[L_{[k]} =  \left\{\left(\begin{array}{cccccccccc}\cline{1-1}
        \multicolumn{1}{|c|}{\ast}  & & & & & & & & & \\\cline{1-2}
         \multicolumn{1}{c|}{} & \multicolumn{1}{c|}{\ast}  &  &  &  &  &  &  &  & \\\cline{2-3}
          & \multicolumn{1}{c|}{}  & \multicolumn{1}{c|}{\ast} &  &  &  &  &  &  &  \\\cline{3-5}
         & & \multicolumn{1}{c|}{} &  \ast & \multicolumn{1}{c|}{\ast}  &  &  &  &  \\
         & & \multicolumn{1}{c|}{} & \ast & \multicolumn{1}{c|}{\ast} &  &  &  &  &  \\\cline{4-7}
         & & & & \multicolumn{1}{c|}{}& \ast & \multicolumn{1}{c|}{\ast} &  &  &  \\
         & & & & \multicolumn{1}{c|}{}& \ast & \multicolumn{1}{c|}{\ast} &  &  &  \\\cline{6-10}
         & & & & &  &  \multicolumn{1}{c|}{} & \ast & \ast & \multicolumn{1}{c|}{\ast} \\
         & & & & &  &  \multicolumn{1}{c|}{} & \ast & \ast & \multicolumn{1}{c|}{\ast} \\
         & & & & &  &  \multicolumn{1}{c|}{} & \ast & \ast & \multicolumn{1}{c|}{\ast} \\\cline{8-10}
    \end{array}\right)\right\}=\GL_1^{\times 3}\times \GL_2^{\times 2}\times \GL_3\; .\]
And the \textbf{$[k]$-symmetric group} is $S_{[k]}=S_3\times S_2$, permuting the three blocks of size one and the two blocks of size two. 

Therefore, the $e$-polynomial of the stratum can be computed as
\[e\left(\mathcal{X}_{\GL_{10}}^{[k]}(\Gamma)\right)=e\left(\mathcal{R}_{\GL_{10}}^{[k]}(\Gamma)/\!\!/\big(L_{[k]}\rtimes S_{[k]}\big)\right)=
 e\left(\mathcal{X}_{\GL_1}^{\ast}(\Gamma)^{\times 3}/S_{3}\times \mathcal{X}_{\GL_2}^{\ast}(\Gamma)^{\times 2}/S_{2}\times \mathcal{X}_{\GL_3}^{\ast}(\Gamma)\right)\; .\]
\end{Example}

\section{An explicit computation: \texorpdfstring{$e$}{e}-polynomial of the \texorpdfstring{$\GL_3$}{GL3}--character variety of the free group}
\label{ssec:explicitAirr}

In this section we use the techniques presented before to actually compute something: the $e$-polynomial of the $\GL_3$-character variety of the free group. We follow the arithmetic methods of \cite{mozgovoy2015arithmetic} combined with the geometric technique of partitions in \cite{florentino2023generating, florentino2021serre}.

\subsection{Step 1: \texorpdfstring{$e$}{e}-polynomials of irreducible character varieties of the free group}

Recall (c.f. \cite[Appendix]{hausel2008mixed}) that the $e$-polynomials of $\mathcal{X}^{\ast}_{\GL_n}(F_r)$, the irreducible $\GL_n$-character varieties over the free group,  equal the polynomials 
$A^{\ast}_{n,r}(q)$, counting the number of points of these varieties over the finite field $\mathbb{F}_q$. Let us use the combinatorial techniques in \cite{mozgovoy2009number, mozgovoy2015arithmetic, florentino2023generating, florentino2021serre} to compute these polynomials.

\begin{Proposition}\cite[Proposition 6.2]{florentino2021serre}
\label{prop:Airrformula} 
The $e$-polynomials of the
irreducible character varieties $A^{\ast}_{n,r}(x)=e(\mathcal{X}^{\ast}_{\GL_n}(F_r))$
are explicitly given by: 
\[
A^{\ast}_{n,r}(x)=(x-1)\sum_{m|n}\frac{\mu(n/m)}{n/m}\,\sum_{[k]\in\mathcal{P}_{m}}\frac{(-1)^{|[k]|}}{|[k]|}\binom{|[k]|}{k_{1},\cdots,k_{m}}\prod_{j=1}^{m}b_{j}(x^{n/m})^{k_{j}}x^{\frac{n(r-1)k_{j}}{m}\binom{j}{2}}\;,
\]
where the polynomials $b_{j}(x)$ are the coefficients of the series $F^{-1}(t)=1+\sum_{n\geq1}b_{n}(x)t^{n}$ and $F(t)$ is given in (\ref{eq:F}). 
\end{Proposition}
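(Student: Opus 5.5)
Start from Proposition \ref{prop:generating}: $\sum_{n\geq 1}A^{\ast}_{n,r}(x)t^n=(1-x)\PLog\left[S\left(F(t)^{-1}\right)\right]$. This identity is an equality of counting polynomials. By Theorem \ref{thm:Katz} and Remark \ref{rem:Katz_over_Airr}, these counting polynomials are the $e$-polynomials of $\mathcal{X}^{\ast}_{\GL_n}(F_r)$. So the whole task is to extract the coefficient of $t^n$ on the right-hand side explicitly. I will do this in three moves: apply the shift, take the ordinary logarithm, and then apply $\Psi^{-1}$, using (\ref{eq:pletyhistic_Adams}), i.e. $\PLog(f)=\Psi^{-1}(\log f)$.

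\textbf{First move (shift).} Write $F(t)^{-1}=1+\sum_{j\geq 1}b_j(x)t^j$. By the definition (\ref{eq:shift}) of $S$, and since $S$ is $\mathbb{Q}[x]$-linear, we get $S(F^{-1})=1+\sum_{j\geq1}b_j(x)x^{(r-1)\binom{j}{2}}t^j$. Call this $1+g(t)$, with $g(t)=\sum_j c_j t^j$ and $c_j=b_j(x)x^{(r-1)\binom{j}{2}}$.

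\textbf{Second move (logarithm).} Expand $\log(1+g)=\sum_{\ell\geq1}\frac{(-1)^{\ell-1}}{\ell}g^\ell$. Collect the coefficient of $t^m$ using the multinomial theorem: a monomial $\prod_j c_j^{k_j}$ with $\sum_j jk_j=m$ corresponds to a partition $[k]\in\mathcal{P}_m$ of length $\ell=|[k]|$, and it appears with multiplicity $\binom{|[k]|}{k_1,\dots,k_m}$. Hence
\[\log S(F^{-1})=\sum_{m\geq1}t^m\sum_{[k]\in\mathcal{P}_m}\frac{(-1)^{|[k]|-1}}{|[k]|}\binom{|[k]|}{k_1,\cdots,k_m}\prod_j b_j(x)^{k_j}x^{(r-1)k_j\binom{j}{2}}.\]

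\textbf{Third move (Adams inverse).} Apply $\Psi^{-1}$ using (\ref{eq:Adams-1}). This operator is a $\mathbb{Q}$-linear map acting on a polynomial coefficient $h(x)t^m$ by $\sum_{d\geq1}\frac{\mu(d)}{d}h(x^d)t^{md}$, because $\Psi^{-1}$ substitutes $x\mapsto x^d$ in every monomial. Take the coefficient of $t^n$ and set $d=n/m$ with $m\mid n$. Each $b_j(x)$ then becomes $b_j(x^{n/m})$, and each factor $x^{(r-1)k_j\binom{j}{2}}$ becomes $x^{\frac{n(r-1)k_j}{m}\binom{j}{2}}$.

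\textbf{Conclusion.} Multiply by $(1-x)=-(x-1)$. This sign absorbs the $(-1)^{|[k]|-1}$ and turns it into $(-1)^{|[k]|}$, which gives exactly the stated formula.

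The main obstacle is not the algebra but justifying these steps rigorously. Two points need care. First, $\Psi^{-1}$ must act on arbitrary polynomial coefficients by the substitution $x\mapsto x^d$; this is the $\lambda$-ring convention behind (\ref{eq:Adams-1}), and it extends by linearity because $\Psi^{-1}$ is defined monomial by monomial. Second, $S(F^{-1})=S(F)^{-1}$ does \emph{not} hold in general, so the shift must be applied to the already inverted series, exactly as Proposition \ref{prop:generating} states; I will not try to commute $S$ with inversion. The remaining bookkeeping, the multinomial count and the reindexing $d=n/m$, is routine.
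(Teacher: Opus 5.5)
Your proposal is correct and follows essentially the same route as the paper's proof. Both arguments apply the shift to the already inverted series, expand $\log$ over partitions via the multinomial theorem, apply $\Psi^{-1}$ termwise (substituting $x\mapsto x^{n/m}$ and reindexing by $m\mid n$), and finally multiply by $(1-x)$. Your explicit sign bookkeeping $(1-x)(-1)^{|[k]|-1}=(x-1)(-1)^{|[k]|}$ is slightly more careful than the paper's closing remark that one simply multiplies by $(x-1)$.
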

\begin{proof}
By (\ref{eq:series_Airr}) and (\ref{eq:pletyhistic_Adams}), we have that generating series of the polynomials $A^{\ast}_{n,r}(x)$ satisfy
\[
\sum_{n\geq1}A^{\ast}_{n,r}(x)\,t^{n}=(1-x)\PLog\left[S \left(F(t)^{-1}\right)\right]=(1-x)\Psi^{-1}\left[\log\left(S (F(t)^{-1})\right)\right]\;,
\]
with $F(t)$ as in \eqref{eq:F}, and $S$ as in (\ref{eq:shift}), then 
\[
S \left(F(t)^{-1}\right)=1+\sum_{n\geq1}b_{n}(x)\,x^{(r-1)\binom{n}{2}}\,t^{n}\;.
\]
Using the Taylor series $\log(1+z)=z-\frac{z^{2}}{2}+\frac{z^{3}}{3}-\cdots\;$,
and the multinomial theorem: 
\[\log\left(1+\sum_{n\geq1}b_{n}(x)\,x^{(r-1)\binom{n}{2}}\,t^{n}\right)=\]
 \[\left(\sum_{n\geq1}b_{n}(x)\,x^{(r-1)\binom{n}{2}}\,t^{n}\right)-\frac{1}{2}\left(\sum_{n\geq1}b_{n}(x)\,x^{(r-1)\binom{n}{2}}\,t^{n}\right)^{2}+\frac{1}{3}\left(\sum_{n\geq1}b_{n}(x)\,x^{(r-1)\binom{n}{2}}\,t^{n}\right)^{3}-\cdots=\]
\begin{equation}\label{eq:log_series}\sum_{m\geq1}\left[\sum_{[k]\in\mathcal{P}_{m}}\frac{(-1)^{|[k]|-1}}{|[k]|}\binom{|[k]|}{k_{1},\cdots,k_{m}}\prod_{j=1}^{m}b_{j}(x)^{k_{j}}\,x^{k_j(r-1)\binom{j}{2}}\right]t^{m}\; ,
\end{equation}
where 
\[\binom{|[k]|}{k_{1},\cdots,k_{m}}=\frac{|[k]|!}{k_{1}!\cdots k_{m}!}\]
are the multinomial coefficients and the inner sum runs over partitions $\mathcal{P}_m$ from Definition \ref{def:partitions}. 

Finally, let us apply the $\mathbb{Q}$-linear operator  $\Psi^{-1}$ in (\ref{eq:Adams-1}) to the expression (\ref{eq:log_series}):
\begin{eqnarray*}
\Psi^{-1}\left(\sum_{m\geq1}\left[\sum_{[k]\in\mathcal{P}_{m}}\frac{(-1)^{|[k]|-1}}{|[k]|}\binom{|[k]|}{k_{1},\cdots,k_{m}}\prod_{j=1}^{m}b_{j}(x)^{k_{j}}\,x^{k_j(r-1)\binom{j}{2}}\right]t^{m}\right)=\\
\sum_{m\geq1}\Psi^{-1}\left(\left[\sum_{[k]\in\mathcal{P}_{m}}\frac{(-1)^{|[k]|-1}}{|[k]|}\binom{|[k]|}{k_{1},\cdots,k_{m}}\prod_{j=1}^{m}b_{j}(x)^{k_{j}}\,x^{k_j(r-1)\binom{j}{2}}\right]t^{m}\right)=\\
\sum_{m\geq 1} \sum_{[k]\in\mathcal{P}_m}\frac{(-1)^{|[k]|-1}}{|[k]|}\binom{|[k]|}{k_{1},\cdots,k_{m}} \prod_{j=1}^m \sum_{l\geq 1}\frac{\mu(l)}{l}b_j(x^l)^{k_j}x^{l k_j (r-1)\binom{j}{2}}t^{lm}=\\
\sum_{n\geq 1} \sum_{m | n} \frac{\mu(n/m)}{n/m} \sum_{[k]\in\mathcal{P}_m}\frac{(-1)^{|[k]|-1}}{|[k]|}\binom{|[k]|}{k_{1},\cdots,k_{m}} \prod_{j=1}^m b_j(x^{n/m})^{k_j}x^{\frac{n k_j (r-1)}{m}\binom{j}{2}}t^{n}
\end{eqnarray*}
The proposition follows from here, multiplying by $(x-1)$. 
\end{proof}

Recall that 
\[F(t)=1+\sum_{n\geq1}a_{n}(x)\,t^{n}\quad\text{and }\quad F^{-1}(t)=1+\sum_{n\geq1}b_{n}(x)t^{n}\; ,
\]
where $a_{n}(x):=\big((x-1)(x^{2}-1)\ldots(x^{n}-1)\big)^{r-1}$, are formal inverses. They are related by $\sum_{k\geq0}a_{k}b_{n-k}=0$, $a_{0}=b_{0}=1$.  From this, we can compute the first three $b_n(x)$ recursively as 
\begin{equation}\label{eq:bn}
b_{1}=-a_{1}\;,\quad b_{2}=a_{1}^{2}-a_{2}\;,\quad b_{3}=-a_{1}^{3}+2a_{1}a_{2}-a_{3}\; ,
\end{equation}
obtaining 
\begin{align*}
b_{1}(x) & =-a_{1}(x)=-(x-1)^{r-1}\;,\\
b_{2}(x) & =a_{1}^{2}(x)-a_{2}(x)=(x-1)^{2r-2}-(x-1)^{r-1}(x^{2}-1)^{r-1}\\
 & =(x-1)^{2r-2}\big(-(x+1)^{r-1}+1\big)\;,\\
b_{3}(x) & =-a_{1}^{3}(x)+2a_{1}(x)\,a_{2}(x)-a_{3}(x)=\\
 & =(x-1)^{3r-3}\big(-(x+1)^{r-1}(x^{2}+x+1)^{r-1}+2(x+1)^{r-1}-1\big)\; .
\end{align*}
Then, by substituting in the formula of Proposition \ref{prop:Airrformula}, we get explicit calculations of the $e$-polynomials of the irreducible character
varieties $A_{n,r}^{\ast}(x)=e(\mathcal{X}_{\GL_n}^{\ast}(F_r))$, for $n=1,2,3$.

\begin{Proposition}\label{prop:calculationAirr}
The $e$-polynomials of the irreducible character
varieties $A_{n,r}^{\ast}(x)=e(\mathcal{X}_{\GL_n}^{\ast}(F_r))$, for $n=1,2,3$, are given by 
  \begin{eqnarray*}\label{eq:Airr}
e(\mathcal{X}_{\GL_1}^{\ast}(F_r))=A_{1,r}^{\ast}(x) & = & (x-1)^{r}\;,\\
e(\mathcal{X}_{\GL_2}^{\ast}(F_r))=A_{2,r}^{\ast}(x)& = & (x-1)\left(\frac{1}{2}b_{1}(x^{2})+\frac{1}{2}b_{1}(x)^{2}-b_{2}(x)x^{r-1}\right)\\
 & = & (x-1)^{r}\Big((x-1)^{r-1}x^{r-1}((x+1)^{r-1}-1)+\frac{1}{2}(x-1)^{r-1}-\frac{1}{2}(x+1)^{r-1}\Big)\; ,\\
e(\mathcal{X}_{\GL_3}^{\ast}(F_r))=A_{3,r}^{\ast}(x)& = & (x-1)\left(\frac{1}{3}b_{1}(x^{3})-\frac{1}{3}b_{1}(x)^{3}+b_{1}(x)b_{2}(x)x^{r-1}-b_{3}(x)x^{3r-3}\right)\\
 & = & (x-1)^{r}\Big(-\frac{1}{3}(x^{2}+x+1)^{r-1}+(x-1)^{2r-2}(\frac{1}{3}-x^{r-1}+x^{r-1}(x+1)^{r-1}\\
 &  & +x^{3r-3}+x^{3r-3}(x+1)^{r-1}(x^{2}+x+1)^{r-1}-2x^{3r-3}(x+1)^{r-1})\Big)\; .
\end{eqnarray*}  
\end{Proposition}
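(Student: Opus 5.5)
The plan is to specialize the formula of Proposition~\ref{prop:Airrformula} to $n=1,2,3$ and then substitute the explicit expressions for $b_1,b_2,b_3$ from (\ref{eq:bn}). For each $n$, the outer sum runs over the divisors $m\mid n$. Each divisor carries the weight $\mu(n/m)/(n/m)$ and the substitution $x\mapsto x^{n/m}$ inside the $b_j$. The inner sum runs over the partitions $[k]\in\mathcal{P}_m$. Each partition contributes the coefficient
\[\frac{(-1)^{|[k]|}}{|[k]|}\binom{|[k]|}{k_1,\dots,k_m}\]
times the product $\prod_j b_j^{k_j}x^{\frac{n(r-1)k_j}{m}\binom{j}{2}}$. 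The sign convention needs care here. The proof of Proposition~\ref{prop:Airrformula} produces $(-1)^{|[k]|-1}$ together with the prefactor $(1-x)$, and these combine into $(x-1)(-1)^{|[k]|}$.

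For $n=1$ only $m=1$, $[k]=[1]$ occurs. This gives $(x-1)\cdot(-1)\cdot b_1(x)=(x-1)(x-1)^{r-1}=(x-1)^r$.

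For $n=2$ there are two divisors.
\begin{itemize}
\item The divisor $m=1$ has weight $\mu(2)/2=-1/2$ and contributes $(x-1)\cdot(-\tfrac12)\cdot(-b_1(x^2))=\tfrac12(x-1)b_1(x^2)$.
\item The divisor $m=2$ has two partitions. The partition $[2]$ has length $1$ and, since $\binom{2}{2}=1$, gives $-b_2(x)x^{r-1}$. The partition $[1^2]$ has length $2$ and gives $+\tfrac12 b_1(x)^2$, with no power of $x$ because $\binom12=0$.
\end{itemize}
This reproduces the first line of the statement.

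For $n=3$ there are again two divisors.
\begin{itemize}
\item The divisor $m=1$ has weight $\mu(3)/3=-1/3$ and gives $\tfrac13 b_1(x^3)$.
\item The divisor $m=3$ has three partitions:
  \begin{itemize}
  \item $[3]$ has length $1$ and gives $-b_3x^{3(r-1)}$, since $\binom32=3$;
  \item $[1\,2]$ has length $2$ and multinomial coefficient $2$, so it gives $\tfrac{2}{2}b_1b_2x^{r-1}$;
  \item $[1^3]$ has length $3$ and gives $-\tfrac13 b_1^3$.
  \end{itemize}
\end{itemize}
This matches the first line for $A^\ast_{3,r}$.

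Next I would substitute $b_1(x)=-(x-1)^{r-1}$, so that $b_1(x^n)=-(x^n-1)^{r-1}$, together with the factored forms of $b_2$ and $b_3$ computed above. For $n=2$, the identity $x^2-1=(x-1)(x+1)$ lets me pull out $(x-1)^{r-1}$ and obtain the bracket. For $n=3$, the identity $x^3-1=(x-1)(x^2+x+1)$ makes every term divisible by $(x-1)^{r-1}$. After factoring out $(x-1)^r$, the four terms are collected as follows:
\begin{itemize}
\item $b_1(x^3)$ gives $-\tfrac13(x^2+x+1)^{r-1}$;
\item $b_1^3$ gives $+\tfrac13(x-1)^{2r-2}$;
\item $b_1b_2x^{r-1}$ gives $-(x-1)^{2r-2}x^{r-1}\bigl(1-(x+1)^{r-1}\bigr)$;
\item $-b_3x^{3r-3}$ gives $(x-1)^{2r-2}x^{3r-3}\bigl((x+1)^{r-1}(x^2+x+1)^{r-1}-2(x+1)^{r-1}+1\bigr)$.
\end{itemize}

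The only real obstacle is bookkeeping of signs, since each term involves the M\"obius value, $(-1)^{|[k]|}$, the minus sign in $b_1$, and the orientation $(1-x)$ versus $(x-1)$. To guard against errors I would check the final expressions with two sanity tests. The first is $r=1$, where $\mathcal{X}^\ast_{\GL_n}(F_1)$ is empty for $n\geq2$; both expressions do vanish, since for $n=2$ the bracket becomes $0+\tfrac12-\tfrac12$, and similarly for $n=3$. The second is $x=1$ behaviour.
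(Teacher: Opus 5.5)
Your proposal is correct and takes the same route as the paper. The paper obtains these formulas exactly by specializing Proposition \ref{prop:Airrformula} to $n=1,2,3$ and substituting the expressions (\ref{eq:bn}) for $b_1,b_2,b_3$. Your term-by-term bookkeeping checks out, including the M\"obius weights, the partition coefficients, the sign combination $(1-x)(-1)^{|[k]|-1}=(x-1)(-1)^{|[k]|}$, and the factorizations through $x^2-1$ and $x^3-1$, as do your $r=1$ sanity checks.
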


\begin{Exercise}
    Compute $b_4(x)$ as in (\ref{eq:bn}) and compute $e(\mathcal{X}_{\GL_4}^{\ast}(F_r))=A_{4,r}^{\ast}(x)$ as in Proposition \ref{prop:calculationAirr}.
\end{Exercise}

\subsection{Step 2: Computation of the abelian strata}

Let us compute separately the $e$-polynomial of the smallest and most singular strata, $\mathcal{X}^{[1^n]}_{\GL_n}(F_r)$, which corresponds to representations of the free group in $r$ generators into $\GL_n$ which are simultaneously reducible to diagonal matrices for each generator. This stratum will be called the \textbf{abelian stratum}, due to the fact that their representations reduce to an abelian subgroup of $\GL_n^r$. The following lemma shows that studying representations in the partition $[1^n]$ is equivalent to studying representations of the abelianization of $F_r$, this is of $\mathbb{Z}^r$, into $\GL_n$.

\begin{Lemma}\cite[Lemma 5.2]{florentino2023generating}
\label{lem:abelian} 
The abelian
stratum is isomorphic to the character variety of the abelianization
of $F_r$: 
\[
\mathcal{X}^{[1^{n}]}_{\GL_n}(F_r)\cong\mathcal{X}_{\GL_n}(\mathbb{Z}^r)\; .
\]
\end{Lemma}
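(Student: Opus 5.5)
We compare both sides through the description of strata by Levi quotients given in (\ref{eq:K-strata}). For the partition $[1^n]$ we have $L_{[1^n]}=(\GL_1)^n=T$, the diagonal torus, and $S_{[1^n]}=S_n$. A $[1^n]$-polystable representation is conjugate to $\rho_1\oplus\cdots\oplus\rho_n$ with each $\rho_j\in\mathcal{R}^{\ast}_{\GL_1}(F_r)=(\mathbb{C}^{\ast})^r$. By (\ref{eq:K-strata}) this gives
\[\mathcal{X}^{[1^n]}_{\GL_n}(F_r)\cong \big((\mathbb{C}^{\ast})^r\big)^n/S_n .\]
Note that every representation into $\GL_1$ is irreducible, so $\mathcal{X}^{\ast}_{\GL_1}(F_r)=(\mathbb{C}^{\ast})^r$. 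The lemma therefore reduces to showing $\mathcal{X}_{\GL_n}(\mathbb{Z}^r)\cong \big((\mathbb{C}^{\ast})^{r}\big)^n/S_n=T^r/W$, where $W=S_n$ is the Weyl group.

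\textbf{Step 1: polystable points of $\mathcal{R}_{\GL_n}(\mathbb{Z}^r)$.} Points of $\mathcal{R}_{\GL_n}(\mathbb{Z}^r)$ are $r$-tuples of pairwise commuting invertible matrices. By the Jordan decomposition, a tuple $(A_1,\dots,A_r)$ with closed orbit is simultaneously semisimple. Indeed, polystable means conjugate to a direct sum of irreducible representations. An irreducible representation of the abelian group $\mathbb{Z}^r$ is one-dimensional by Schur's lemma, since commuting operators have a common eigenvector. Hence every polystable commuting tuple is simultaneously diagonalizable, that is, conjugate into $T^r$. Two diagonal tuples are conjugate exactly when they differ by a permutation of coordinates: the simultaneous eigenline decomposition is unique up to reordering, so the stabilizer of the diagonal structure reduces to $N(T)=T\rtimes S_n$.

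\textbf{Step 2: bijection of points.} Each point of the GIT quotient corresponds to a unique closed orbit. Steps 1 and 1' together identify the points of $\mathcal{X}_{\GL_n}(\mathbb{Z}^r)$ with $T^r/S_n$. On the other side, a $[1^n]$-polystable representation of $F_r$ is by definition conjugate into $T^r$. Such tuples automatically factor through the abelianization $F_r\to\mathbb{Z}^r$, and any representation of $\mathbb{Z}^r$ pulls back to one of $F_r$. So the two sets of polystable representations coincide as subsets of $\GL_n^r$, and the two quotients coincide as sets.

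\textbf{Step 3: algebraic isomorphism, the main obstacle.} We must upgrade the bijection to an isomorphism of varieties. The plan is to use the inclusion $T^r\hookrightarrow\mathcal{R}_{\GL_n}(\mathbb{Z}^r)$. It is $N(T)$-equivariant, so it induces a morphism $T^r/S_n\to\mathcal{R}_{\GL_n}(\mathbb{Z}^r)/\!\!/\GL_n$, which is bijective by Step 2. Both varieties are normal: the source because $T^r/S_n$ is a quotient of a smooth variety by a finite group, the target because it is the image of that normal variety under a bijective morphism. Under normality, a bijective birational morphism is an isomorphism by Zariski's Main Theorem; birationality holds because on the regular-semisimple locus, where the eigenvalue tuples are distinct, the map is étale.

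If normality of the target is doubtful, we would instead argue on coordinate rings, proving that restriction $\mathbb{C}[\mathcal{R}_{\GL_n}(\mathbb{Z}^r)]^{\GL_n}\to\mathbb{C}[T^r]^{S_n}$ is surjective. This is the multi-matrix Chevalley restriction statement. Surjectivity should follow because the multisymmetric generators of $\mathbb{C}[T^r]^{S_n}$ are restrictions of the trace functions $\tr(A_1^{a_1}\cdots A_r^{a_r})$, allowing negative exponents and inverse determinants. Injectivity holds because invariant functions are determined by their values on closed orbits. For the purposes of $e$-polynomials, a bijective morphism that is a homeomorphism would already suffice, so this step is the delicate one but can be weakened if necessary.
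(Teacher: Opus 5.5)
Your Steps 1--2 are the paper's proof. Schur's lemma makes every irreducible representation of $\mathbb{Z}^r$ one-dimensional, so the polystable locus of $\mathcal{R}_{\GL_n}(\mathbb{Z}^r)$ and the $[1^n]$-polystable locus of $\mathcal{R}_{\GL_n}(F_r)$ are the same subset of $\GL_n^r$, and one then passes to quotients. The paper stops there. You correctly see that this only gives a bijection of points and try to upgrade it in Step 3, but your primary argument there fails. The image of a normal variety under a bijective morphism need not be normal: the normalization $t\mapsto (t^2,t^3)$ of the cuspidal cubic is a bijective morphism from $\mathbb{A}^1$ onto a non-normal curve. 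Normality of $\mathcal{R}_{\GL_n}(\mathbb{Z}^r)/\!\!/\GL_n$ is essentially equivalent to the isomorphism $T^r/S_n\cong\mathcal{X}_{\GL_n}(\mathbb{Z}^r)$ you want, so the Zariski Main Theorem route is circular as written.

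Your fallback is the argument that works, and it should be the proof rather than a contingency. Two facts are needed.
\begin{enumerate}
\item In characteristic $0$, the ring of multisymmetric polynomials $\mathbb{C}[x_{ij}]^{S_n}$ ($1\le i\le n$, $1\le j\le r$) is generated by the polarized power sums $\sum_i x_{i1}^{a_1}\cdots x_{ir}^{a_r}$. These are exactly the restrictions to $T^r$ of $\tr(A_1^{a_1}\cdots A_r^{a_r})$.
\item The element $f=\prod_{i,j}x_{ij}$ is $S_n$-invariant, so $\mathbb{C}[T^r]^{S_n}=(\mathbb{C}[x_{ij}]^{S_n})_f$. Moreover $f$ is the restriction of the invariant $\prod_j\det A_j$, whose inverse is regular on $\GL_n^r$.
\end{enumerate}
Together these give surjectivity of $\mathbb{C}[\mathcal{R}_{\GL_n}(\mathbb{Z}^r)]^{\GL_n}\to\mathbb{C}[T^r]^{S_n}$. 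Injectivity, on the reduced commuting variety, follows because an invariant function is constant on each orbit closure. Each orbit closure contains a closed orbit, and by Step 1 that orbit meets $T^r$.

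With this, your proof is complete. It is also more careful than the paper's, which identifies the polystable loci and ``takes quotients'' without explaining why the resulting quotient is the full GIT quotient as a variety. Your route through the explicit model $T^r/S_n$ on both sides, via (\ref{eq:K-strata}) and the invariant theory above, supplies that missing step. Your remark that a bijective morphism already suffices for $e$-polynomials is true in characteristic $0$, where bijective morphisms are piecewise isomorphisms, but it proves less than the stated isomorphism. Finally, ``Steps 1 and $1'$'' refers to a step that does not exist.
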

\begin{proof}
By Schur lemma, if $\phi\in \Aut (\mathbb{C}^n)$ and $\rho:\Gamma\rightarrow \GL_n$ is an irreducible representation such that $\rho(\gamma)\in \GL_n$ commutes with $\phi$ for every $\gamma\in\Gamma$, then $\phi$ must be a scalar multiple of the identity. Then, if $\Gamma=\mathbb{Z}^r$ abelian and $\rho$ is irreducible, $\rho(\gamma)$ is a multiple of the identity for every $\gamma\in\Gamma$, hence 
the irreducible representations of the abelian group $\mathbb{Z}^r$ are necessarily $1$-dimensional.

Therefore, a polystable representation $\rho:\mathbb{Z}^r\rightarrow \GL_n$ splits into a direct sum of irreducible representations and, hence, belongs to the strata $\mathcal{R}^{[1^n]}_{\GL_n}(\mathbb{Z}^r)$. By composing with the quotient $F_r\twoheadrightarrow \mathbb{Z}^r$ we get a representation of $\mathcal{R}^{[1^n]}_{\GL_n}(F^r)$, then $\mathcal{R}^{ps}_{\GL_n}(\mathbb{Z}^r)\subset \mathcal{R}^{[1^n]}_{\GL_n}(F^r)$, this inclusion of the polystable points being a morphism of algebraic varieties.

On the other hand, a representation of $\mathcal{R}^{[1^n]}_{\GL_n}(F^r)$ has its image into an abelian subgroup of $\GL_n^r$, then under the quotient $F_r\twoheadrightarrow \mathbb{Z}^r$ it defines a unique representation of $\mathbb{Z}^r$ given that all commutators are sent to the identity in $\GL_n$. 
Then, we obtain an isomorphism 
\[
\mathcal{R}^{[1^{n}]}_{\GL_n}(F_r)\cong\mathcal{R}^{ps}_{\GL_n}(\mathbb{Z}^r)\; ,
\]
which provides the isomorphism of the statement of the Lemma by taking quotients to the character varieties. 
\end{proof}

Now let us compute the $e$-polynomial of the abelian character variety $\mathcal{X}_{\GL_n}(\mathbb{Z}^r)$. For a detailed study on abelian character varieties, see \cite{florentino2021abelian, li2024SPSO}. 

\begin{Proposition}\cite[Theorem 5.1]{florentino2021abelian}
\label{prop:abelianZn} 
The $e$-polynomial of the $\GL_n$-character variety of the free abelian group in $r$ generators is 
\[
e\left((\mathcal{X}_{\GL_n}(\mathbb{Z}^{r})\right)=\sum_{[k]\in\mathcal{P}_{n}}\prod_{j=1}^{n}\frac{(x^j-1)^{r\,k_j}}{k_j!\, j^{k_j}}\; .
\]
\end{Proposition}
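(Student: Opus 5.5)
The plan is to identify the abelian character variety with a symmetric product and then compute the $e$-polynomial of that symmetric product via an equivariant (Macdonald-type) formula. By the argument in the proof of Lemma \ref{lem:abelian}, every polystable representation $\rho:\mathbb{Z}^r\to\GL_n$ is simultaneously diagonalizable. So $\rho$ is determined up to conjugation by $n$ characters $\mathbb{Z}^r\to\mathbb{C}^*$, i.e.\ $n$ points of $(\mathbb{C}^*)^r$, taken up to the action of the Weyl group $S_n$ (the normalizer of the torus modulo the torus). This gives an isomorphism
\[
\mathcal{X}_{\GL_n}(\mathbb{Z}^r)\cong \big((\mathbb{C}^*)^r\big)^n/S_n=\mathrm{Sym}^n\big((\mathbb{C}^*)^r\big).
\]
To establish it I would exhibit the morphism $(T^r)\to \mathcal{R}_{\GL_n}(\mathbb{Z}^r)/\!\!/\GL_n$, with $T$ the diagonal torus. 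This morphism is $S_n$-invariant, so it factors through $T^r/S_n$, and it is bijective onto the closed orbits. Normality of $T^r/S_n$ then upgrades this to an isomorphism. Only the $e$-polynomial is needed, and a bijective morphism onto the polystable orbits combined with a stratification already suffices for that.

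Next I would compute $e(X^n/S_n)$ for $X=(\mathbb{C}^*)^r$. The $e$-polynomial of a finite quotient is the $e$-polynomial of the $S_n$-invariant part of $H_c^*(X^n)$, and this is compatible with mixed Hodge structures. Applying the averaging projector, one gets
\[
e(X^n/S_n)=\frac{1}{n!}\sum_{\sigma\in S_n}\operatorname{tr}(\sigma \mid H^*_c(X^n)),
\]
where the trace is the alternating sum graded by Hodge type. For $\sigma$ of cycle type $[k]$, the Künneth formula (Proposition \ref{prop:multiplicative_pols}) factors the trace over cycles. For a $j$-cycle acting on $X^j$, the graded trace equals $e(X)$ evaluated at $(u^j,v^j)$, which is the standard Macdonald computation, with sign bookkeeping for odd-degree classes. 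Grouping permutations by cycle type, there are $n!/\prod_j k_j!\,j^{k_j}$ permutations of type $[k]$. This yields
\[
e(X^n/S_n)=\sum_{[k]\in\mathcal{P}_n}\prod_{j}\frac{e(X)(x^j)^{k_j}}{k_j!\,j^{k_j}}.
\]
Since $e((\mathbb{C}^*)^r)=(x-1)^r$, substituting $x^j$ gives $(x^j-1)^r$, and hence the stated formula.

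The main obstacle is the cycle-trace computation: showing that a $j$-cycle acting on $H_c^*(X)^{\otimes j}$ has graded trace $e(X;u^j,v^j)$, including the Koszul signs from odd-degree classes (here $H^1_c$-type classes occur). I would first handle this by noting that $X=(\mathbb{C}^*)^r$ is balanced and of polynomial type. Then by Theorem \ref{thm:Katz} I can instead count $\mathbb{F}_q$-points of $\mathrm{Sym}^n(\mathbb{G}_m^r)$. Such a point is an effective $0$-cycle of degree $n$, so its count is the coefficient of $t^n$ in $\PExp$-type generating functions $\prod_{d}(1-t^d)^{-N_d}$, where $N_d$ is the number of closed points of degree $d$. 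Taking logarithms gives $\exp\big(\sum_{j}|X(\mathbb{F}_{q^j})|t^j/j\big)=\exp\big(\sum_j (q^j-1)^r t^j/j\big)$. Expanding this exponential over partitions reproduces exactly the sum over $[k]\in\mathcal{P}_n$ of $\prod_j (q^j-1)^{rk_j}/(k_j!\,j^{k_j})$. This arithmetic route avoids the sign issue entirely, and I would use it as the actual proof.
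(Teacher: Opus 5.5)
Your argument is correct, but it reaches the formula by a different route from the paper's. The paper never identifies the variety. It notes that irreducible representations of $\mathbb{Z}^r$ are one-dimensional, so $A^{\ast}_{1}=(x-1)^r$ and $A^{\ast}_{n}=0$ for $n\geq 2$. It then feeds this into the identity $\sum_n A_n t^n=\PExp\big(\sum_n A^{\ast}_n t^n\big)$ of \eqref{eq:generating_series}, which is stated for $F_r$ and, for $\Gamma=\mathbb{Z}^r$, rests on the generalization \cite[Theorem 4.10]{florentino2023generating}. You instead identify $\mathcal{X}_{\GL_n}(\mathbb{Z}^r)$, up to a bijective morphism, with $\mathrm{Sym}^n\big((\mathbb{C}^\ast)^r\big)$. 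You then count $\mathbb{F}_q$-points of the symmetric product through the zeta function $\prod_d(1-t^d)^{-N_d}=\exp\big(\sum_j |X(\mathbb{F}_{q^j})|\,t^j/j\big)$. Both routes land on the same series $\exp\big(\sum_j (x^j-1)^r t^j/j\big)=\PExp\big((x-1)^r t\big)$, and the final expansion over partitions is identical. In fact your computation is exactly what the general identity reduces to here: by \eqref{eq:K-strata}, for $\Gamma=\mathbb{Z}^r$ the only nonempty stratum is $[1^n]$, which is $\mathrm{Sym}^n\big(\mathcal{X}^{\ast}_{\GL_1}(\mathbb{Z}^r)\big)$. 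The paper's proof is a one-line application of the general machinery. Yours is self-contained and shows where the plethystic exponential comes from. The cost is that you must justify the symmetric-product identification, and check routinely that $\mathrm{Sym}^n$ of the $\mathbb{Z}$-model of $\mathbb{G}_m^r$ reduces correctly modulo $p$, so that Theorem~\ref{thm:Katz} applies.

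One slip: normality of $T^r/S_n$ does not upgrade the bijective morphism $T^r/S_n\to\mathcal{X}_{\GL_n}(\mathbb{Z}^r)$ to an isomorphism. Zariski's main theorem needs normality of the \emph{target}; the normalization of a cuspidal curve is a bijective morphism from a normal variety that is not an isomorphism. This does not hurt your argument, since over $\mathbb{C}$ a bijective morphism is a piecewise isomorphism and so preserves $e$-polynomials, as you say. Alternatively, commuting matrices are simultaneously triangularizable, so Theorem~\ref{thm:GZ_strat} with $I=\emptyset$ gives $[\mathcal{X}_{\GL_n}(\mathbb{Z}^r)]=[T^r/S_n]$ directly.
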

\begin{proof}
As we said in the proof of Lemma \ref{lem:abelian}, all irreducible representations of the abelian group $\mathbb{Z}^r$ into $\GL_n$ are $1$-dimensional, therefore 
\[e(\mathcal{X}_{\GL_n}^{\ast}(\mathbb{Z}_r))=A_{n,r}^{\ast, \mathbb{Z}_r}(x)=0\; , \quad \text{for }n\geq 2.\]
For $n=1$, the conjugation action in the $\GL_1=\mathbb{C}^{\ast}$-character variety is trivial, then 
\[e(\mathcal{X}_{\GL_1}(\mathbb{Z}^r))=e(\mathcal{R}_{\GL_1}(\mathbb{Z}^r))=e(\Hom(\mathbb{Z}^r, \mathbb{C}^{\ast}))=e((\mathbb{C}^{\ast})^{r})=(x-1)^r\; ,\]
using the computations in (\ref{eq:eC*}) and Proposition \ref{prop:relation_pols}. 

Now, using the relationship (\ref{eq:generating_series}) between the generating series, we obtain:
\[\sum_{n\geq 0} A_{n,r}^{\mathbb{Z}_r}(x) t^n=\PExp\left(A_{1,r}^{\ast, \mathbb{Z}_r}(x) t\right)=\PExp\left((x-1)^r t\right)\; .\]

Let us compute this plethystic exponential using its definition in (\ref{eq:PExp}): 
\[
\PExp((x-1)^r t)=e^{\left(\Psi((x-1)^rt)\right)}=e^{\left(\sum_{j\geq1}\frac{(x^j -1)^r t^j}{j}\right)}=
\prod_{j\geq1}e^{\left(\frac{(x^j -1)^r t^j}{j}\right)}=\]
\[\prod_{j\geq1}\,\sum_{k\geq0}\frac{(x^j-1)^{rk} t^{jk}}{k!\,j^{k}}=
\sum_{n\geq0}t^{n}\left(\sum_{[k]\in\mathcal{P}_{n}}\prod_{j=1}^{n}\frac{(x^j-1))^{r k_{j}}}{k_{j}!\ j^{k_{j}}}\right)\; ,
\]
where note that, in the last equality, we put together all terms contributing to $t^n$, which are precisely all partitions $n=\sum_{j=1}^n j k_j$. The proposition follows from this. 
\end{proof}

Let us finish the section by applying these results. 

\begin{Proposition}\label{prop:calculation1^3}
The $e$-polynomial of the abelian stratum of the $\GL_3$-character variety of the free group in $r$ generators is 
\[e\left(\mathcal{X}^{[1^{3}]}_{\GL_3}(F_r)\right)=(x-1)^r\left(\frac{(x^2+x+1)^r}{3}
+\frac{(x^2-1)^r}{2}+\frac{(x-1)^{2r}}{6}\right)\; .\]
\end{Proposition}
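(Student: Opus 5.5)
Using Lemma \ref{lem:abelian}, I first replace the abelian stratum by the character variety of the abelianization, $\mathcal{X}^{[1^{3}]}_{\GL_3}(F_r)\cong\mathcal{X}_{\GL_3}(\mathbb{Z}^r)$. Its $e$-polynomial is then given directly by Proposition \ref{prop:abelianZn} with $n=3$:
\[
e\left(\mathcal{X}_{\GL_3}(\mathbb{Z}^{r})\right)=\sum_{[k]\in\mathcal{P}_{3}}\prod_{j=1}^{3}\frac{(x^j-1)^{r\,k_j}}{k_j!\, j^{k_j}}\; .
\]
So the whole proof is to list the partitions of $3$ and evaluate this sum.

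There are three partitions of $3$, and each contributes one term:
\begin{itemize}
\item $[1^3]$ ($k_1=3$) contributes $\frac{(x-1)^{3r}}{3!}=\frac{(x-1)^{3r}}{6}$;
\item $[1\,2]$ ($k_1=k_2=1$) contributes $\frac{(x-1)^r(x^2-1)^r}{2}$;
\item $[3]$ ($k_3=1$) contributes $\frac{(x^3-1)^r}{3}$.
\end{itemize}

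It remains to factor out $(x-1)^r$. For the last term I use $x^3-1=(x-1)(x^2+x+1)$, so that
\[
\frac{(x^3-1)^r}{3}=(x-1)^r\,\frac{(x^2+x+1)^r}{3}\; .
\]
The first term gives $(x-1)^r\frac{(x-1)^{2r}}{6}$ and the middle term gives $(x-1)^r\frac{(x^2-1)^r}{2}$. Adding the three contributions yields exactly the expression in Proposition \ref{prop:calculation1^3}.

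There is no real obstacle. The only points needing care are the normalizing factors $k_j!\,j^{k_j}$, which are $6$, $2$ and $3$ for the three partitions, and the factorization $x^3-1=(x-1)(x^2+x+1)$ used to extract the common $(x-1)^r$.
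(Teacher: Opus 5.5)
Your proposal is correct and is essentially the paper's own proof. You reduce to $\mathcal{X}_{\GL_3}(\mathbb{Z}^r)$ via Lemma~\ref{lem:abelian}, specialize Proposition~\ref{prop:abelianZn} to $n=3$, and evaluate the three partition terms with normalizing factors $6$, $2$ and $3$; you then factor out $(x-1)^r$ using $x^3-1=(x-1)(x^2+x+1)$.
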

\begin{proof}
By Lemma \ref{lem:abelian} and Proposition \ref{prop:abelianZn}, we have 
\[e\left(\mathcal{X}^{[1^{3}]}_{\GL_3}(F_r)\right)=e\left((\mathcal{X}_{\GL_3}(\mathbb{Z}^{r})\right)=\sum_{[k]\in\mathcal{P}_{3}}\prod_{j=1}^{3}\frac{(x^{j}-1)^{r\,k_{j}}}{k_{j}!\ j^{k_{j}}}\; .\]
There are three possible partitions in $\mathcal{P}_3$ of the integer $3$:
\begin{eqnarray*}
 &[3]\; , & \text{with } k_1=0, k_2=0, k_3=1\; ,\\
 &[1 2]\; , & \text{with } k_1=1, k_2=1, k_3=0\; ,\\
& [13]\; , & \text{with } k_1=3, k_2=0, k_3=0\; .\
\end{eqnarray*}
Then, by plugging these numbers into the previous expression, we get:
\begin{eqnarray*}
\sum_{[k]\in\mathcal{P}_{3}}\prod_{j=1}^{3}\frac{(x^{j}-1)^{r\,k_{j}}}{k_{j}!\ j^{k_{j}}} &  & =\\
  \frac{(x-1)^{r\cdot 0}}{0!\; 1^0}\cdot\frac{(x^2-1)^{r\cdot 0}}{0!\; 2^0}\cdot \frac{(x^3-1)^{r\cdot 1}}{1!\; 3^1} & + & \\
    \frac{(x-1)^{r\cdot 1}}{1!\; 1^1}\cdot\frac{(x^2-1)^{r\cdot 1}}{1!\; 2^1}\cdot \frac{(x^3-1)^{r\cdot 0}}{0!\; 3^0} & + & \\
      \frac{(x-1)^{r\cdot 3}}{3!\; 1^3}\cdot\frac{(x^2-1)^{r\cdot 0}}{0!\; 2^0}\cdot \frac{(x^3-1)^{r\cdot 0}}{0!\; 3^0} &  & =\\
      \frac{(x^3-1)^r}{3}
+(x-1)^r\frac{(x^2-1)^r}{2}+\frac{(x-1)^{3r}}{6}\; ,
\end{eqnarray*}
from which we get the expression of the statement, factoring by $(x-1)^r$.
\end{proof}

\begin{Exercise}
Compute $e\left(\mathcal{X}^{[1^{4}]}_{\GL_4}(F_r)\right)$ (c.f. Proposition \ref{prop:calculation1^3}).
\end{Exercise}

\subsection{Step 3: \texorpdfstring{$e$}{e}-polynomial of the \texorpdfstring{$\GL_3$}{GL3}--character variety of the free group}

Finally, let us compute explicitly the $e$-polynomial of $\mathcal{X}_{\GL_3}(F_r)$, the $\GL_3$-character variety of the free group in $r$ generators. 

By Theorem \ref{thm:stratGLn}, we will stratify the character variety into three strata, corresponding to the 
three possible partitions of the integer $3$, 
$\mathcal{P}_3=\{[3], [1\; 2], [1^3]\}$: 
\begin{equation}
    \mathcal{X}_{\GL_{3}}(F_r) = \bigsqcup_{[k] \in \mathcal{P}_{3}} \mathcal{X}_{\GL_3}^{[k]}(F_r)=\mathcal{X}^{[3]}_{\GL_{3}}(F_r)\sqcup \mathcal{X}^{[1\; 2]}_{\GL_{3}}(F_r)\sqcup \mathcal{X}^{[1^3]}_{\GL_{3}}(F_r)\; .
\end{equation}
The stratum labeled by the partition $[3]$ corresponds to the irreducible representations and the label $[1^3]$ corresponds to the abelian stratum. 
By (\ref{prop:e-strat}), the $e$-polynomial can be obtained as the sum of the $e$-polynomials of each stratum:
\begin{equation}
e(\mathcal{X}_{\GL_{3}}(F_r)) = e(\mathcal{X}^{[3]}_{\GL_{3}}(F_r))+ e(\mathcal{X}^{[1\; 2]}_{\GL_{3}}(F_r))+e(\mathcal{X}^{[1^3]}_{\GL_{3}}(F_r))\; .
\end{equation}
Using Proposition \ref{prop:decomp_epol} we can further decompose the calculation of the first two strata into those for irreducible character varieties and get
\begin{eqnarray*}
e(\mathcal{X}_{\GL_3}^{\ast}(F_r))+e(\mathcal{X}_{\GL_1}^{\ast}(F_r))\cdot e(\mathcal{X}_{\GL_2}^{\ast}(F_r))+e(\mathcal{X}^{[1^3]}_{\GL_{3}}(F_r))=\\
A_{3,r}^{\ast}(x)+A_{2,r}^{\ast}(x)\cdot A_{1,r}^{\ast}(x)+e(\mathcal{X}^{[1^3]}_{\GL_{3}}(F_r))\; .
\end{eqnarray*}
The $e$-polynomials $A_{j,r}^{\ast}, j=1,2,3$ of the three irreducible character varieties are computed in Proposition \ref{prop:calculationAirr} and the $e$-polynomial of the abelian character variety $\mathcal{X}^{[1^3]}_{\GL_{3}}(F_r)$ is computed in Proposition \ref{prop:calculation1^3}. 

Gathering together all the computations we obtain the following

\begin{Theorem}\cite[Theorem 6.7]{florentino2023generating}\label{thm:computationGL3}
The $e$-polynomial of the $\GL_3$-character variety of the free group in $r$ generators is
\[
 e\left(\mathcal{X}_{\GL_3}(F_r)\right)  =  (x-1)^r\left[
\frac{1}{3}(x^2+x+1)^{r-1}(x+1)x+\frac{1}{2}(x-1)^r(x+1)^{r-1}x\right.\]
\[+\left.(x-1)^{2r-2}\left((x+1)^{r-1}\left(x^{3r-3}(x^2+x+1)^{r-1} + x^r-2x^{3r-3}\right)+x^{3r-3}-x^r+\frac{1}{6}x(x-1)\right)\right] \; .\]
\end{Theorem}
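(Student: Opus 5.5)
The plan is to assemble the three strata contributions already computed and reduce the sum to the stated closed form by polynomial algebra. By Theorem \ref{thm:stratGLn} and Proposition \ref{prop:decomp_epol},
\[
e(\mathcal{X}_{\GL_3}(F_r))=A^{\ast}_{3,r}(x)+A^{\ast}_{1,r}(x)A^{\ast}_{2,r}(x)+e\big(\mathcal{X}^{[1^3]}_{\GL_3}(F_r)\big).
\]
For the middle stratum $[1\,2]$ the symmetric group $S_{[k]}=S_1\times S_1$ is trivial. So (\ref{eq:K-strata}) gives the product $\mathcal{X}^{\ast}_{\GL_1}(F_r)\times\mathcal{X}^{\ast}_{\GL_2}(F_r)$, and Proposition \ref{prop:multiplicative_pols} applies directly; no finite-quotient issue arises here. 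The abelian stratum is Proposition \ref{prop:calculation1^3}, and the irreducible pieces come from Proposition \ref{prop:calculationAirr}.

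Every term carries a factor $(x-1)^r$: $A^{\ast}_{1,r}$ and $A^{\ast}_{3,r}$ do explicitly, $A^{\ast}_{1,r}A^{\ast}_{2,r}$ has $(x-1)^{2r}$, and the abelian stratum has $(x-1)^r$. I would factor out $(x-1)^r$ and sort the contents of the bracket into three kinds of terms.

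\textbf{The $\tfrac13$-terms.} From $A^{\ast}_{3,r}$ we get $-\tfrac13(x^2+x+1)^{r-1}$, and from the abelian stratum $\tfrac13(x^2+x+1)^r$. Their sum is $\tfrac13(x^2+x+1)^{r-1}(x^2+x)$, which is the first term of the statement.

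\textbf{The $\tfrac12$-terms.} These come from $A^{\ast}_{2,r}$ multiplied by $(x-1)^r$, namely $\tfrac12(x-1)^{2r-1}-\tfrac12(x-1)^r(x+1)^{r-1}$, together with $\tfrac12(x^2-1)^r=\tfrac12(x-1)^r(x+1)^r$ from the abelian stratum. Combining the two $(x+1)$-terms gives $\tfrac12(x-1)^r(x+1)^{r-1}x$, the second term of the statement. The leftover $\tfrac12(x-1)^{2r-1}$ must be carried into the $(x-1)^{2r-2}$ bracket.

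\textbf{The $(x-1)^{2r-2}$-terms.} These are the remaining ones:
\begin{itemize}
\item from $A^{\ast}_{3,r}$: $\tfrac13-x^{r-1}+x^{r-1}(x+1)^{r-1}+x^{3r-3}+x^{3r-3}(x+1)^{r-1}(x^2+x+1)^{r-1}-2x^{3r-3}(x+1)^{r-1}$;
\item from $(x-1)^rA^{\ast}_{2,r}$: $(x-1)x^{r-1}\big((x+1)^{r-1}-1\big)$, plus the leftover $\tfrac12(x-1)$;
\item from the abelian stratum: $\tfrac16(x-1)^{2}$.
\end{itemize}

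Collecting these, the $x^{r-1}(x+1)^{r-1}$ contributions combine to $x^{r-1}(x+1)^{r-1}\big(1+(x-1)\big)=x^r(x+1)^{r-1}$. Likewise $-x^{r-1}-(x-1)x^{r-1}=-x^r$. The constants combine as $\tfrac13+\tfrac12(x-1)+\tfrac16(x-1)^2=\tfrac16(x^2+x)$, which should be checked against $\tfrac16x(x-1)$ in the statement. This yields exactly $(x+1)^{r-1}\big(x^{3r-3}(x^2+x+1)^{r-1}+x^r-2x^{3r-3}\big)+x^{3r-3}-x^r+\tfrac16(\cdots)$.

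The main obstacle is this bookkeeping, in particular the constant term. My grouping gives $\tfrac16x(x+1)$, which disagrees with the $\tfrac16x(x-1)$ in the statement. I would first recheck the constant $\tfrac13$ inside $A^{\ast}_{3,r}$ against Proposition \ref{prop:Airrformula} (the sign of the $b_1(x)^3$ term). I would then sanity-check the final formula at $r=1$, where $\mathcal{X}_{\GL_3}(\mathbb{Z})\cong\mathbb{C}^2\times\mathbb{C}^{\ast}$ has $e$-polynomial $x^2(x-1)$, and fix the sign accordingly.
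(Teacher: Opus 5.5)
Your approach is the paper's own: split into the strata $[3]$, $[1\,2]$, $[1^3]$, write the total as $A^{\ast}_{3,r}+A^{\ast}_{1,r}A^{\ast}_{2,r}+e(\mathcal{X}^{[1^3]}_{\GL_3}(F_r))$ (with trivial $S_{[k]}$ on the middle stratum), factor out $(x-1)^r$ and collect terms. The paper only says ``gathering together all the computations''.

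Your bookkeeping is correct, and the discrepancy you found is a misprint in the statement, not an error on your side. The constants do combine to $\tfrac13+\tfrac12(x-1)+\tfrac16(x-1)^2=\tfrac16x(x+1)$. The $\tfrac13$ inside $A^{\ast}_{3,r}$ is right as printed, since $-\tfrac13b_1(x)^3=+\tfrac13(x-1)^{3r-3}$. Your proposed $r=1$ test settles the question:
\begin{itemize}
\item with $\tfrac16x(x+1)$ the formula gives $x^2(x-1)=e(\mathbb{C}^2\times\mathbb{C}^{\ast})$, as it should;
\item the printed $\tfrac16x(x-1)$ gives $x(x-1)(x-\tfrac13)$ instead.
\end{itemize}
In general the printed expression differs from the correct one by $-\tfrac13x(x-1)^{3r-2}$. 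Since the correct one is a sum of $e$-polynomials and so lies in $\mathbb{Z}[x]$, the printed one is not in $\mathbb{Z}[x]$ for any $r$. The last term of the theorem should read $\tfrac16x(x+1)$, and with that correction your argument proves it.
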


We can obtain some direct geometric information from the $e$-polynomial computed in Theorem \ref{thm:computationGL3}. 

\begin{Corollary}\label{cor:euler_irr}
The $\GL_3$-character variety of the free group in $r$ generators is an irreducible affine algebraic variety of complex dimension $9r-8$ and Euler characteristic zero.
\end{Corollary}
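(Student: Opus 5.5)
The plan is to read all three claims off the explicit polynomial in Theorem \ref{thm:computationGL3}, using Remark \ref{rem:irreducible} and the fact that $\chi = e(\,\cdot\,;1)$. Affineness needs no work: $\mathcal{X}_{\GL_3}(F_r)=\GL_3^r/\!\!/\GL_3$ is a GIT quotient of an affine variety by a reductive group, hence affine by Definition \ref{def:CV}. Balancedness also comes for free, since Remark \ref{rem:Katz_over_Airr} shows the variety is of polynomial type and Theorem \ref{thm:Katz} applies.

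The Euler characteristic is the easiest step. Every summand in Theorem \ref{thm:computationGL3} sits inside the global factor $(x-1)^r$ with $r\geq 1$. Evaluating at $x=1$ therefore gives $\chi(\mathcal{X}_{\GL_3}(F_r))=e(\mathcal{X}_{\GL_3}(F_r);1)=0$. A sanity check is that each stratum already vanishes at $x=1$: $A^\ast_{j,r}$ carries the factor $(x-1)^r$ by Proposition \ref{prop:calculationAirr}, and so does the abelian stratum by Proposition \ref{prop:calculation1^3}.

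For the dimension and irreducibility I will find the leading term of $e$. The plan is to compare the degrees of the summands inside the bracket.
\begin{itemize}
\item $(x-1)^{2r-2}(x+1)^{r-1}x^{3r-3}(x^2+x+1)^{r-1}$ has degree $(2r-2)+(r-1)+(3r-3)+(2r-2)=8r-8$ and coefficient $1$.
\item The next candidates, namely $(x-1)^{2r-2}(x+1)^{r-1}\cdot 2x^{3r-3}$, $(x-1)^{2r-2}x^{3r-3}$, $\frac12 (x-1)^r(x+1)^{r-1}x$ and $\frac13(x^2+x+1)^{r-1}(x+1)x$, all have degree at most $6r-6$ or $2r$.
\end{itemize}
For $r\geq 2$ these are strictly smaller than $8r-8$. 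Multiplying by $(x-1)^r$ shows that $e$ is monic of degree $9r-8$ in $x=uv$.

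To turn this into the geometric statement, I will argue as follows. The top weight term $x^{d}$ of $e$ corresponds to $H_c^{2d}$, which has dimension equal to the number of irreducible components of top dimension $d$. Alternatively, one can argue through the stratification, which gives a cleaner justification. The open stratum $\mathcal{X}^{\ast}_{\GL_3}(F_r)$ is smooth of dimension $9r-9+1=9r-8$, because $\GL_3^r$ has dimension $9r$ and $\PGL_3$ acts freely on irreducibles. Its $e$-polynomial $A^\ast_{3,r}$ is monic of degree $9r-8$ by the same degree count, so by Remark \ref{rem:irreducible} it is irreducible. The other strata have smaller degree and hence smaller dimension. The open irreducible stratum is also dense, because irreducibles are open in the irreducible variety $\GL_3^r$ and the quotient map is surjective. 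Hence $\mathcal{X}_{\GL_3}(F_r)$ is its closure, which is irreducible of dimension $9r-8$.

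The main obstacle is the small cases. Remark \ref{rem:irreducible} is stated for smooth varieties, so it cannot be applied to the whole singular variety, and the stratum argument above is the workaround. Separately, for $r=1$ the degree comparison fails: $\mathcal{X}_{\GL_3}(\mathbb{Z})\cong\mathbb{C}^2\times\mathbb{C}^\ast$ has dimension $3\neq 9r-8$. I would therefore state and check the corollary for $r\geq 2$, treating $r=1$ separately or excluding it.
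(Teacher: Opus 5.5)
Your proposal is correct for $r\geq 2$. Your Euler characteristic argument and your leading-term count are the same as the paper's. The paper reads the exponent $9r-8$ and leading coefficient $1$ off Theorem~\ref{thm:computationGL3} and sets $x=1$.

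You diverge on irreducibility and dimension. The paper applies Remark~\ref{rem:irreducible} directly to the whole, singular, character variety. You apply it only to the smooth open stratum $\mathcal{X}^{\ast}_{\GL_3}(F_r)$ and then pass to the closure. That step uses that $\mathcal{R}^{\ast}$ is open and dense in the irreducible $\GL_3^r$ and that the quotient map is surjective.

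Your route is the rigorous one. Besides the smoothness hypothesis, a monic leading term of $e$ only shows there is exactly one component of top dimension. For example, a plane meeting a line in a point has $e=x^2+x-1$. So the paper's inference needs exactly the extra input you supply.

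Once you use the density argument, the $e$-polynomial is no longer needed for irreducibility at all. $\mathcal{X}_{\GL_3}(F_r)$ is the image of the irreducible variety $\GL_3^r$. Its dimension $9r-8$ follows from the free $\PGL_3$-action on $\mathcal{R}^\ast$. The $e$-polynomial is essential only for $\chi=0$.

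Your $r=1$ caveat is correct, and the paper's proof misses it. $\mathcal{X}_{\GL_3}(\mathbb{Z})\cong\mathbb{C}^2\times\mathbb{C}^\ast$ has dimension $3\neq 1$. At $r=1$ the term that produces the exponent $9r-8$ is the constant $1$ and no longer dominates. So the dimension claim needs $r\geq 2$, while irreducibility and $\chi=0$ still hold at $r=1$.

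If you check $r=1$ from the printed formula, note that its last summand $\frac{1}{6}x(x-1)$ should read $\frac{1}{6}x(x+1)$. Summing the three strata gives $(x-1)^{2r-2}\bigl(\frac13+\frac12(x-1)+\frac16(x-1)^2\bigr)=\frac16x(x+1)(x-1)^{2r-2}$. With this correction, $r=1$ yields $x^2(x-1)=e(\mathbb{C}^2\times\mathbb{C}^\ast)$. The printed version instead gives the non-integral $(x-1)(x^2-\frac{x}{3})$.
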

\begin{proof}
    Note that the leading monomial of $ e\left(\mathcal{X}_{\GL_3}(F_r)\right)$ has exponent (see the first terms of the second line) 
    \[r+(2r-2)+(r-1)+(3r-3)+2(r-1)=9r-8\; .\]
    The leading coefficient is equal to one, then the top Hodge number is one, hence the irreducibility (c.f. Remark \ref{rem:irreducible}). By setting $x=1$, the factor $(x-1)^r$ yields the Euler characteristic equal to zero. 
\end{proof}

\begin{Remark}
    To proceed with the calculation of  $e\left(\mathcal{X}_{\GL_4}(F_r)\right)$ in rank $4$ is not automatic from the steps followed in Theorem \ref{thm:computationGL3}. This is due to the presence of partition $[2^2]$ whose polynomial is not the product of the polynomials of two irreducible character varieties, but a quotient of these by $S_2$. See the notion of rectangular partitions in \cite{florentino2023generating, florentino2021serre} for details on achieving the calculation for general $n$. 
\end{Remark}

\section{Stratifications of \texorpdfstring{$G$}{G}--character varieties}

In this section and the following one we sketch the results of \cite{gonzalez2024rootdata} where the authors extend the $\GL_n$-stratification in \cite{florentino2023generating} (see Section \ref{ssec:stratification}) to a general $G$-character variety, where $G$ is a complex reductive algebraic group. We will produce a locally closed stratification of $\mathcal{X}_G(\Gamma)$ into strata indexed by the parabolic subgroups of $G$, and will reduce the computation of each strata to a subvariety of representations (forming a core and a pseudo-quotient), which simplifies considerably the problem. 

\subsection{Pseudo-quotients and cores}

 Here we recall the main ideas of \cite{gonzalez2024pseudo} about the notions of pseudo-quotient and core of an action of a group in a variety. We will observe that pseudo-quotients are the right quotient notion to deal with motives in the Grothendieck ring (c.f. Remark \ref{rem:motives}) and, therefore, with $e$-polynomials. 

In the following, we denote by $(X,G)$ the pair consisting on an algebraic variety $X$ and a complex reductive algebraic group acting $G$ acting on $X$. 

\begin{Definition}\label{def:quotients}
Let $(X,G)$ be a pair as before and let $(Y,\pi)$ be another pair consisting on an algebraic variety $Y$ and a regular morphism $\pi:X\rightarrow Y$ such that
\begin{itemize}
\item[(1)] $\pi$ is surjective.
\item[(2)] $\pi$ is $G$-invariant.
\item[(3)] $W\subseteq X$ closed $G$-invariant, then $\pi(W)\subseteq Y$ closed. 
\item[(4)] If $W_1, W_2\subseteq X$ closed $G$-invariant, $W_1\cap W_2=\emptyset \Leftrightarrow \pi(W_1)\cap \pi(W_2)=\emptyset$. 
\item[(5)] If $V\subseteq Y$ open, $\pi$ induces an isomorphism $\pi^{\ast}:\mathcal{O}_{Y}(V)\cong \mathcal{O}_{X}(\pi^{-1}(V))^{G}\subseteq \mathcal{O}_{X}(\pi^{-1}(V))$.
\item[(6)] It is an orbit space, i.e. $G\cdot x$ is closed in $X$ for every $x\in X$. 
\end{itemize}

If (1), (2), (3) and (4) are satisfied, then $(Y,\pi)$ is a \textbf{pseudo-quotient} of $(X,G)$.

If, moreover, (5) is satisfied, then $(Y,\pi)$ is a \textbf{good quotient} of $(X,G)$.

If, furthermore, (6) is satisfied, then $(Y,\pi)$ is a \textbf{geometric quotient} of $(X,G)$
\end{Definition}

The notion of good quotient is the one used in Geometric Invariant Theory \cite{mumford1994geometric, newstead2012introduction}, and therefore character varieties are good quotients, which are unique and categorical. However, good quotients do not behave well motivically, in the sense that they may not commute with stratifications. Pseudo-quotients, by contrast, are not unique nor categorical quotients. This is a weaker notion capturing topology but not algebra in the sense that if $\pi$ is a pseudo-quotient, for every open subset $V\subseteq Y$, $\pi$ induces the morphism
\[\pi^{\ast}:\mathcal{O}_{Y}(V)\rightarrow \mathcal{O}_{X}(\pi^{-1}(V))^{G}\hookrightarrow \mathcal{O}_{X}(\pi^{-1}(V))\]
which is not necessarily an isomorphism. However, they capture completely the motivic information of the quotient. 

\begin{Proposition}\cite[Proposition 3.7, Corollary 3.8, Corollary 4.3]{gonzalez2024pseudo}
If $(Y, \pi)$, $(Z, \xi)$ are two pseudo-quotients of $(X,G)$, then the motives $[Y]=[Z]$ are equal in ${\rm KVar}$ and, then, the $e$-polynomials $e(Y)=e(Z)$ are equal. 
\end{Proposition}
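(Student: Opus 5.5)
The plan is to compare both pseudo-quotients through a common refinement and show that $Y$ and $Z$ can be cut into locally closed pieces that match up isomorphically; additivity in ${\rm KVar}$ (Remark \ref{rem:motives}) then gives $[Y]=[Z]$, and applying $e:{\rm KVar}\rightarrow\mathbb{Z}[u,v]$ gives $e(Y)=e(Z)$.

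\textbf{Step 1: a bijection between $Y$ and $Z$.} I first show that each fiber of $\pi$ contains exactly one closed $G$-orbit. Take $y\in Y$. The fiber $\pi^{-1}(y)$ is closed and $G$-invariant, and by (1) it is nonempty, so it contains a closed orbit (an orbit of minimal dimension). Suppose it contained two distinct closed orbits $W_1\neq W_2$. These are disjoint closed $G$-invariant sets, yet $\pi(W_1)=\pi(W_2)=\{y\}$, which contradicts (4). Hence closed orbits of $X$ are in bijection with the points of $Y$, and the same holds for $Z$. This yields a set-theoretic bijection $\phi:Y\rightarrow Z$ with $\phi\circ\pi=\xi$.

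\textbf{Step 2: make $\phi$ algebraic piecewise.} Consider the morphism $(\pi,\xi):X\rightarrow Y\times Z$ and let $\Gamma$ be its image. By Step 1, $\Gamma$ is the graph of $\phi$. By Chevalley it is constructible, and I expect property (3) to show it is closed: apply (3) to the closed invariant set $(\pi,\xi)^{-1}(C)$ for closed $C$. The projections $p_Y:\Gamma\rightarrow Y$ and $p_Z:\Gamma\rightarrow Z$ are bijective morphisms. A bijective morphism of complex varieties need not be an isomorphism, since it may fail to be birational onto a normalization. However, in characteristic zero a bijective morphism restricts to an isomorphism over a dense open subset: over $\mathbb{C}$ it is generically of degree one, hence birational, hence an isomorphism on an open subset. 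Noetherian induction then stratifies $Y$ into locally closed pieces $Y_i$ on which $p_Y^{-1}(Y_i)\rightarrow Y_i$ is an isomorphism. Consequently $[\Gamma]=[Y]$ in ${\rm KVar}$, and symmetrically $[\Gamma]=[Z]$.

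\textbf{Main obstacle.} The delicate point is the piecewise-isomorphism claim for bijective morphisms, together with verifying that the reduced structure on $\Gamma$ is an honest variety, closed or at least locally closed. If closedness fails, I would work directly with constructible pieces, since Chevalley's theorem and Noetherian induction suffice for motivic equality. Properties (3) and (4) enter only in Step 1 and in the closedness of $\Gamma$. Property (5) is never needed, which is exactly why pseudo-quotients capture the motive.
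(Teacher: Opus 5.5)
Your argument is correct once you commit to the constructible version of Step 2. It rests on the same two facts as the argument in \cite{gonzalez2024pseudo} that the notes cite. First, every fiber of a pseudo-quotient contains exactly one closed orbit, so two pseudo-quotients are canonically in bijection. Second, a bijective morphism of complex varieties is a piecewise isomorphism (generic degree one in characteristic zero, plus Noetherian induction), hence preserves classes in ${\rm KVar}$. Taking the image of $(\pi,\xi)$ in $Y\times Z$ as the common comparison object also means you never need a categorical quotient.

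\textbf{Closedness of $\Gamma$ fails in general.} Property (3) cannot give it: applied to $(\pi,\xi)^{-1}(C)$, it only shows that $p_Y(\Gamma\cap C)$ is closed in $Y$, i.e.\ that $p_Y|_{\Gamma}$ is a homeomorphism onto $Y$. Here is a counterexample.
Take $G$ trivial and $X=\mathbb{A}^1\setminus\{-1\}$. Let $\nu(t)=(t^2-1,\,t(t^2-1))$ be the normalization of the nodal cubic $\{v^2=u^2(u+1)\}$, which glues $\pm 1$. Set $\pi=\nu|_X$ and $\xi(t)=\nu\bigl((2t-1)/3\bigr)$, which glues $-1$ and $2$.
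Both maps are bijective and send closed sets (finite sets or all of $X$) to closed sets, so both are pseudo-quotients. As $t\to -1$, $(\pi(t),\xi(t))\to(o,o)$, where $o=(0,0)$ is the node. But no point of $X$ maps to $(o,o)$, since $\pi^{-1}(o)=\{1\}$ and $\xi^{-1}(o)=\{2\}$. So $\Gamma=\overline{\Gamma}\setminus\{(o,o)\}$ is not closed.

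Consequently the decomposition $\Gamma=\bigsqcup_j\Gamma_j$ into locally closed pieces is the actual argument, not a fallback. Each $\Gamma_j$ maps injectively under $p_Y$ and $p_Z$ onto constructible sets partitioning $Y$ and $Z$. This gives $[Y]=\sum_j[p_Y(\Gamma_j)]=\sum_j[\Gamma_j]=\sum_j[p_Z(\Gamma_j)]=[Z]$. In this form, property (3) is never used; only (1), (2) and (4) are.

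\textbf{A missing line in Step 1.} The identity $\phi\circ\pi=\xi$ needs a justification. For $x\in X$, the set $\overline{G\cdot x}$ lies in $\pi^{-1}(\pi(x))\cap\xi^{-1}(\xi(x))$ and contains a closed orbit. By uniqueness, that orbit is the closed orbit of both fibers, so $\pi$ and $\xi$ have the same fibers.
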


Using the notion of pseudo-quotient, we will reduce the motivic computation of each stratum to a core, which comes from the following idea. 
Given a pair $(X,G)$, suppose that there exists a subvariety $Y \subseteq X$ such that it meets all closures of orbits, this is, $\overline{G\cdot x}\cap Y\neq \emptyset$ for every $x\in X$. Then, for every point in the GIT quotient $[x]\in X/\!\!/G$, there exists a representative $y\in Y$ with $[x]=[y]$. However, $Y$ is not a slicing: we must quotient by a subgroup $H \subseteq G$ leaving $Y$ invariant. This idea resembles on the notion of polystable points in moduli theory: all $S$-equivalence classes in a moduli space contain a polystable representative. 

\begin{figure}[h!]
    \centering
    \includegraphics[clip, trim=4cm 4cm 1cm 5cm, width=1.1\linewidth]{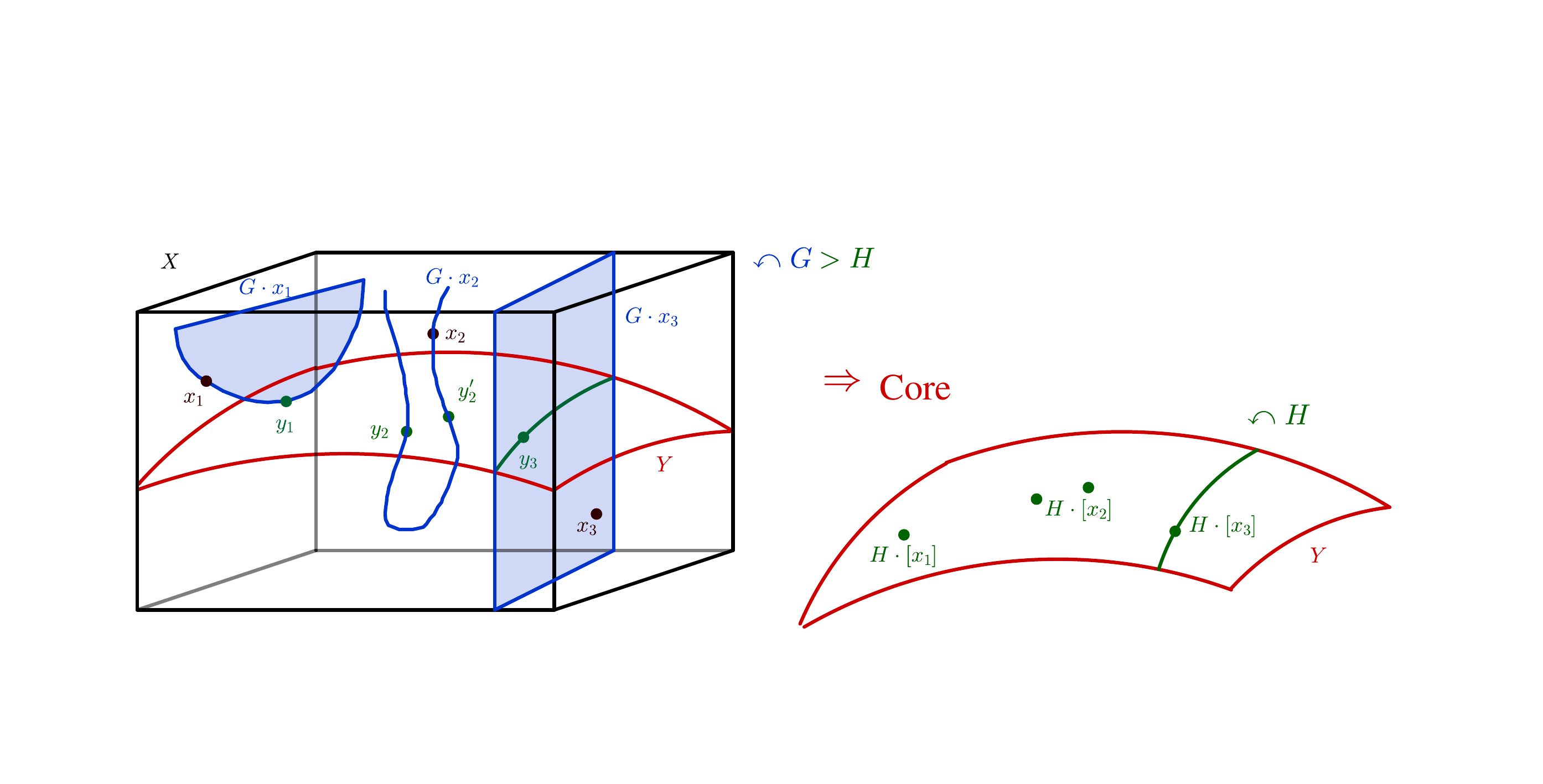}
    \caption{Core $(Y,H)$ of $(X,G)$.}
    \label{fig:core}
\end{figure}

\begin{Definition}\label{def:core}
A \textbf{core} for $(X,G)$ is a pair $(Y, H)$ given by a subvariety $Y\subseteq X$ and a subgroup $H \subseteq G$ such that 
\begin{itemize}
\item[(i)] $Y$ is orbitwise-closed for the $H$-action, i.e. $\overline{H\cdot y}\subseteq Y$, for all $y\in Y$. 
\item[(ii)] For every $x\in X$, we have $\overline{G\cdot x}\cap Y\neq \emptyset$. 
\item[(iii)] For every two $W_1, W_2\subseteq Y$ disjoint closed (in $Y$) $H$-invariant subsets, we have that $\overline{G\cdot W_1}\cap \overline{G\cdot W_2} = \emptyset$. 
\end{itemize}
\end{Definition}

\begin{Proposition}\cite[Proposition 5.8]{gonzalez2024pseudo}
Suppose that $(X,G)$ has a core $(Y, H)$ and $(X,G)$ admits a categorical quotient. Then, for any two pseudo-quotients $X\rightarrow \overline{X}$, $Y \rightarrow \overline{Y}$ of $(X,G)$ and $(Y,H)$ respectively, we have
$[\overline{X}]=[\overline{Y}]$ in ${\rm KVar}$.

In particular, if there exists GIT good quotients $X\rightarrow X/\!\!/ G$ and $Y\rightarrow Y/\!\!/H$, then 
the motives $[X/\!\!/ G]=[Y/\!\!/H]$ are equal in ${\rm KVar}$ and, therefore, the $e$-polynomials $e\left(X/\!\!/ G\right)=e\left(Y/\!\!/H\right)$ are equal. 
\end{Proposition}

\subsection{Root data}

Here we collect some basic knowledge about root data of Lie groups. For an extensive treatment on this, see \cite{Borel, springer1998}. 

Let $G$ be a complex reductive algebraic group. 
Fix a \textbf{Borel} subgroup $B$ and a \textbf{maximal torus} $T$ such that $T\subset B\subset G$. Each conjugacy class of \textbf{parabolics} $\mathcal{P}$ contains a unique standard parabolic $P$ such that $T\subset B\subset P\subset G$.

The \textbf{characters} and \textbf{cocharacters} of $T$ are dual free abelian groups
\[\chi\in X^{\ast}(T)=\{\chi:T\rightarrow \mathbb{C}^{\ast}\}\quad , \quad \lambda \in X_{\ast}(T)=\{\lambda:\mathbb{C}^{\ast}\rightarrow T\}\; ,\]
with pairing $(\chi, \lambda )\in \mathbb{Z}$. 
The \textbf{roots} $\Phi\subseteq X^{\ast}(T)$ are weights for the adjoint action of $T$ on ${\rm Lie}(G)$. Roots are in bijection with the \textbf{coroots} $\Phi^{\vee}\subseteq X_{\ast}(T)$ by $\Phi \ni \alpha\leftrightarrow \alpha^{\vee}\in \Phi^{\vee}$ satisfying the pairing $(\alpha, \alpha^{\vee})=2$. 

The \textbf{Weyl group} of $G$ is defined as $W\simeq N_G(T)/Z_G(T) \subset {\rm Aut}(X^{\ast}(T))$, and it is generated by the \textbf{reflections} 
\[s_{\alpha}:X^{\ast}(T)\rightarrow X^{\ast}(T)\;,\; x\mapsto x-(x, \alpha^{\vee})\alpha\; .\]

We will call \textbf{root datum} of $G$ to the set 
\[R = (X^{\ast}, \Phi, X_{\ast}, \Phi^{\vee})\]
given by characters, roots, cocharacters and coroots. The information in the root datum recovers completely the group $G$. The \textbf{dual root datum} is given by interchanging characters with cocharacters, and roots with coroots:
\[R^\vee = (X_{\ast}, \Phi^{\vee}, X^{\ast}, \Phi)\; .\]
This dual root datum recovers another reductive group which is called the \textbf{Langlands dual} group of $G$ and it is denoted by $^L G$.
For example, the complex groups 
$G=\SL_n:=\SL(n,\mathbb{C})$ and $^L G=\PGL_n:=\PGL(n,\mathbb{C})$ are Langlands dual groups, as well as $G=\Sp_{2n}:=\Sp(2n,\mathbb{C})$ and $^L G=\SO_{2n+1}:=\SO(2n+1,\mathbb{C})$. The groups 
$\GL_n:=\GL(n,\mathbb{C})$ and $\SO_{2n}:=\SO(2n,\mathbb{C})$ are Langlands \textbf{self-dual}.

Let us recall how parabolic subgroups are given by subsets of the Dynkin diagram. 
Pick a set of positive roots $\Phi^{+}$ and a set of \textbf{simple roots} $\Delta\subset \Phi^{+}$ (which are the nodes of Dynkin diagram of the semisimple part of $G$). 
Subsets $I\subseteq \Delta$ are in bijection with conjugacy classes of parabolics $\mathcal{P}_I$ or standard parabolics such that $T\subset B\subset P_I\subset G$. Observe that $P_{\emptyset}=B$ and $P_{\Delta}=G$. There are $2^{|\Delta|}$ (conjugacy classes of) parabolics.

For each $I\subset \Delta$ we define define the following groups: 
\begin{itemize}
\item $\Phi_I\subseteq \Phi$ is the subset of roots generated by $\alpha\in I$, and we define $\Phi^{\vee}_I$ similarly, 
\item \textbf{$I$-torus}, defined as $T_I:=\left(\bigcap_{\alpha\in I}\ker \alpha\right)^{\circ}\subseteq T$, 
\item \textbf{$I$-Levi subgroup}, defined as $L_I:=Z_G(T_I)$, 
\item \textbf{$I$-normalizer}, defined as $N_I:=N_G(T_I)$, 
\item \textbf{$I$-Weyl}, defined as $W_I:=N_G(T_I)/Z_G(T_I)=N_I/L_I$ (not to be confused with the Weyl group of $L_I$).
\end{itemize}
Observe that the root datum of $L_I$ is $(X^{\ast}(T), \Phi_I, X_{\ast}(T), \Phi^{\vee}_I)$.

\subsection{Reducing motivic computations via parabolic stratification and cores}

Let $\mathcal{X}_G(\Gamma)$ be the $G$-character variety of a finitely generated group $\Gamma$. Let us produce a stratification of $\mathcal{X}_G(\Gamma)$ into parabolic types in order to compute their motives and $e$-polynomials  using the idea of pseudo-quotient and core. 

First, we decompose the representation space $\mathcal{R}_G(\Gamma)=\bigcup_{I\subseteq \Delta}\widehat{\mathcal{R}}^{\ast}_{P_I}(\Gamma)$ into \textbf{conjugacy classes of parabolic representations} where we will denote: 
\begin{itemize}
\item $\mathcal{R}_{P_I}(\Gamma):=\{\rho:\Gamma\rightarrow G\;,\; \rho(\Gamma)\subset P_I\}$, the 
\textbf{$P_I$-representations},
\item $\widehat{\mathcal{R}}_{P_I}(\Gamma):=\bigcup_{P\in\mathcal{P}_I}\mathcal{R}_P(\Gamma)=G\cdot \mathcal{R}_{P_I}(\Gamma)$, the \textbf{conjugacy class of $P_I$-representations},
\item $\mathcal{R}_{P_I}^{\ast}(\Gamma)= \mathcal{R}_{P_I}(\Gamma)\backslash \bigcup_{J\subsetneq I}\mathcal{R}_{P_J}(\Gamma)$, the \textbf{$P_I$-irreducible representations},
\item $\widehat{\mathcal{R}}^{\ast}_{P_I}(\Gamma)=G\cdot \mathcal{R}_{P_I}^{\ast}(\Gamma)$, the \textbf{conjugacy class of $P_I$-irreducible representations}.
\end{itemize}
And define similarly the subsets $\mathcal{R}_{L_I}(\Gamma)$, $\widehat{\mathcal{R}}_{L_I}(\Gamma)$, $\mathcal{R}^{\ast}_{L_I}(\Gamma)$, $\widehat{\mathcal{R}}^{\ast}_{L_I}(\Gamma)$ for \textbf{Levi $L_I$-representations}.

The following result produces the desired motivic decomposition via parabolic stratification. 
\begin{Theorem}\cite[Theorem 4.13 and corollary 4.14]{gonzalez2024rootdata}\label{thm:GZ_strat}
For each subset $I\subset \Delta$, the pair $(\mathcal{R}_{L_I}^\star, N_I)$ is a core for $(\widehat{\mathcal{R}}_{P_I}^\star, G)$. Therefore the motives 
\[ [\widehat{\mathcal{R}}_{P_I}^\star(\Gamma)/\!\!/ G] = [\mathcal{R}_{L_I}^\star(\Gamma)/\!\!/ N_I]\] 
are equal in ${\rm KVar}$ and, hence, the $e$-polynomials 
\[ e\left(\widehat{\mathcal{R}}_{P_I}^\star(\Gamma)/\!\!/ G\right) = e\left(\mathcal{R}_{L_I}^\star(\Gamma)/\!\!/ N_I\right)\] 
are also equal. 
\end{Theorem}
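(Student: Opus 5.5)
The plan is to check the three axioms of Definition \ref{def:core} for the pair $(Y,H)=(\mathcal{R}_{L_I}^\star(\Gamma), N_I)$ inside $(X,G)=(\widehat{\mathcal{R}}_{P_I}^\star(\Gamma), G)$. Once these hold, the motivic and $e$-polynomial equalities follow from \cite[Proposition 5.8]{gonzalez2024pseudo}, recalled above, applied with the good quotients $X/\!\!/G$ and $Y/\!\!/N_I$. There is one preliminary step. We must check that both sides make sense as GIT quotients of locally closed pieces, or at least that they admit categorical quotients. The key tool throughout is the Levi projection. For $P_I=L_I\ltimes U_I$, there is a cocharacter $\lambda\in X_\ast(T_I)$ with $\lim_{t\to 0}\lambda(t)p\lambda(t)^{-1}=\pi_I(p)$, where $\pi_I:P_I\to L_I$ is the quotient by the unipotent radical.

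The first axiom to check is (ii), that $Y$ meets every orbit closure. Take $\rho\in\widehat{\mathcal{R}}^\star_{P_I}$ and conjugate it so that $\rho(\Gamma)\subset P_I$, with $\rho$ lying in no smaller standard parabolic. Then $\lambda(t)\rho\lambda(t)^{-1}\to\pi_I\circ\rho$, so $\pi_I\circ\rho\in\overline{G\cdot\rho}$. We must also show that $\pi_I\circ\rho$ is $L_I$-irreducible, meaning it lies in no proper parabolic of $L_I$. If it did, the preimage under $\pi_I$ of that parabolic would be a parabolic of $G$ of strictly smaller type (conjugate to some $P_J$ with $J\subsetneq I$) containing $\rho(\Gamma)$. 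That contradicts $P_I$-irreducibility. This shows $\pi_I\circ\rho\in\mathcal{R}^\star_{L_I}$.

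Next comes axiom (i). Any $L_I$-irreducible representation has closed $L_I$-orbit, by the Richardson-type criterion that irreducible equals stable for the Levi modulo its center. The finite group $W_I=N_I/L_I$ permutes such orbits, and $N_I$-orbits are finite unions of $L_I$-orbits. So $N_I$-orbits in $Y$ are closed, and $Y$ is $N_I$-invariant because $N_I$ normalizes $L_I$. I would also need to check that a $G$-conjugate of an $L_I$-irreducible representation lying in $L_I$ differs from it by an element of $N_I$. The argument: such a representation has centralizer whose identity component has central torus conjugate to $T_I$, and conjugating the tori back gives an element of $N_G(T_I)$.

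The main obstacle is axiom (iii), separation. I would reduce it to the claim that the $G$-orbits of points of $Y$ are closed in $X$, together with the statement that $G\cdot y\cap Y=N_I\cdot y$ from the previous step. Closedness of $G\cdot y$ for $L_I$-irreducible $y$ I would obtain from Richardson's theorem: $y$ is completely reducible because its image lies in a Levi and is not in any proper parabolic of that Levi, hence its $G$-orbit in $G^s$ is closed. With this in hand, take disjoint closed $N_I$-invariant subsets $W_1,W_2$ of $Y$. A point in $\overline{G\cdot W_1}\cap\overline{G\cdot W_2}$ has, by (ii), a limit point in $Y$. Tracking closures via the Levi projection, which is continuous and maps $\overline{G W_i}\cap\mathcal{R}_{P_I}$ into $N_I\cdot W_i=W_i$, would force $W_1\cap W_2\neq\emptyset$. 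The delicate point is the continuity and well-definedness of this projection across different conjugates of $P_I$. I would handle it by working with the closed subvariety $\mathcal{R}_{P_I}$ and the proper map $G\times^{P_I}\mathcal{R}_{P_I}\to\widehat{\mathcal{R}}_{P_I}$.
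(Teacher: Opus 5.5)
Your plan has the same structure as the paper's sketch. You verify the three core axioms using three ingredients: polystability of $L_I$-irreducible representations for (i), the cocharacter limit $\lambda(t)\rho\lambda(t)^{-1}\to\pi_I\circ\rho$ for (ii), and $N_I$-conjugacy of $G$-conjugate points of $\mathcal{R}^\star_{L_I}$ for (iii). You also correctly notice that (iii) concerns closed $N_I$-invariant sets, not only pairs of points, which the paper's sketch glosses over.

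\textbf{The gap is in your justification of the key conjugacy step.} The identity component of $Z_G(y)$ does not in general have central torus conjugate to $T_I$. Take $G=\GL_2$ and $I=\emptyset$, so $L_I=T_I=T$, and let $y$ be the trivial (or any scalar) representation, which is $L_I$-irreducible. Then $Z_G(y)=\GL_2$, whose central torus is the one-dimensional group of scalars. Your argument would also give $g\in N_I$ whenever $g\cdot y\in\mathcal{R}^\star_{L_I}$. That fails here, since every $g\in G$ fixes $y$.

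What is true is that $T_I$ is a \emph{maximal} torus of $Z_G(y)^\circ$:
\begin{itemize}
\item A torus $S\supseteq T_I$ centralizing $y(\Gamma)$ lies in $Z_G(T_I)=L_I$, and $y(\Gamma)\subseteq Z_{L_I}(S)$, which is a Levi subgroup of a parabolic of $L_I$.
\item $L_I$-irreducibility forces $Z_{L_I}(S)=L_I$, hence $S\subseteq Z(L_I)^\circ=T_I$.
\item Now let $y_2=g y_1 g^{-1}$ with $y_1,y_2\in\mathcal{R}^\star_{L_I}$. Then $T_I$ and $g^{-1}T_Ig$ are both maximal tori of $Z_G(y_1)^\circ$.
\item Hence $g^{-1}T_Ig=zT_Iz^{-1}$ for some $z\in Z_G(y_1)^\circ$, and $n=gz\in N_I$ satisfies $n\cdot y_1=y_2$.
\end{itemize}
So the conclusion holds, but only in the form $g\in N_I\,Z_G(y_1)$.

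\textbf{Smaller point in (iii).} Continuity of the Levi projection does not control $\overline{G\cdot W_i}\cap\mathcal{R}_{P_I}$, because that set is generally larger than the closure of $G\cdot W_i\cap\mathcal{R}_{P_I}$. The properness you mention is what closes the argument:
\begin{itemize}
\item Set $\widetilde W_i=\{\rho\in\mathcal{R}_{P_I}:\pi_I\circ\rho\in\overline{W_i}\}$, with $\overline{W_i}$ the closure in $\mathcal{R}_{L_I}$. It is closed and $P_I$-stable, since $\pi_I(P_I)=L_I\subseteq N_I$.
\item By properness of $G\times^{P_I}\mathcal{R}_{P_I}\to\mathcal{R}_G$, the set $G\cdot\widetilde W_i$ is closed, so it contains $\overline{G\cdot W_i}$.
\item For $y\in Y\cap\overline{G\cdot W_i}$, write $y=g\rho g^{-1}$ with $\rho\in\widetilde W_i$. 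Then $\pi_I\circ\rho\in\overline{G\cdot y}=G\cdot y$, since $G\cdot y$ is closed.
\item Since $\rho$ is conjugate to $y$, it is $P_I$-irreducible, so your (ii) argument gives $\pi_I\circ\rho\in Y\cap\overline{W_i}=W_i$.
\item The corrected conjugacy lemma then gives $y\in N_I\cdot W_i=W_i$.
\end{itemize}
Thus $Y\cap\overline{G\cdot W_i}=W_i$, and combined with (ii) this yields (iii).

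\textbf{Smaller point in (ii).} The preimage $\pi_I^{-1}(Q)$ of a proper parabolic $Q\subset L_I$ is only $L_I$-conjugate to a standard $P_J$ with $J\subsetneq I$. So you must take $P_I$-irreducibility to exclude every parabolic of type $J\subsetneq I$, not just the standard ones. That is the reading under which the statement holds.
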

\begin{proof}
To prove this result we check the conditions in Definition \ref{def:core} of core. First we show that $\mathcal{R}_{L_I}^\star$ is polystable and $N_I$-invariant, therefore it is orbitwise-closed. 
Then, if a representation $\rho\in \widehat{\mathcal{R}}^{\ast}_{P_I}(\Gamma)$, then $\overline{G\cdot \rho}\cap \mathcal{R}^{\ast}_{L_I}(\Gamma)\neq \emptyset$. Finally, if two representations $\rho_1, \rho_2 \in \mathcal{R}^{\ast}_{L_I}(\Gamma)$ satisfy $\overline{G\cdot \rho_1}\cap \overline{G\cdot \rho_2}\neq\emptyset$, then $\exists g_0\in N_I$ such that $g_0\cdot \rho_1=\rho_2$.
\end{proof}

\begin{Theorem}\cite[Theorem 4.15 and Corollary 4.17]{gonzalez2024rootdata}\label{thm:moticivGZ}
For every reductive group $G$ and every finitely generated group $\Gamma$, we have that 
\[\left[\mathcal{X}_G(\Gamma)\right] = 
 \left[\mathcal{R}_G(\Gamma)/\!\!/ G \right]= \left[\bigcup_{I \subseteq \Delta} \widehat{\mathcal{R}}_{P_I}^\star(\Gamma)/\!\!/ G\right]=
\sum_{I \in 2^{\Delta}/\sim_W} \left[\mathcal{R}_{L_I}^\star(\Gamma)/\!\!/ N_I\right]\; ,\] 
and also 
\[e\left(\mathcal{X}_G(\Gamma)\right) = 
 e\left(\mathcal{R}_G(\Gamma)/\!\!/ G \right)= e\left(\bigcup_{I \subseteq \Delta} \widehat{\mathcal{R}}_{P_I}^\star(\Gamma)/\!\!/ G\right)=
\sum_{I \in 2^{\Delta}/\sim_W} e\left(\mathcal{R}_{L_I}^\star(\Gamma)/\!\!/ N_I\right)\; .\] 

If, moreover, the sequence
\[    1 \longrightarrow L_I = Z_G(T_I) \longrightarrow N_I = N_G(T_I) \longrightarrow W_I = N_I / L_I \longrightarrow 1\]
splits, hence, $N_I = L_I \rtimes W_I$, then the decomposition simplifies to
\[\left[\mathcal{X}_G(\Gamma)\right] = \sum_{I \in 2^{\Delta}/\sim_W} \left[\mathcal{X}_{L_I}^\star(\Gamma)/\!\!/ W_I\right]\quad \text{and} \quad e\left(\mathcal{X}_G(\Gamma)\right) = \sum_{I \in 2^{\Delta}/\sim_W} e\left(\mathcal{X}_{L_I}^\star(\Gamma)/\!\!/ W_I\right)\; .\]
\end{Theorem}
\begin{proof}
We provide a sketch of the proof. We have that $\mathcal{R}_G(\Gamma)=\bigcup_{I\subseteq \Delta}\widehat{\mathcal{R}}^{\ast}_{P_I}(\Gamma)$, then  show that $\widehat{\mathcal{R}}^{\ast}_{P_I}(\Gamma)$ are $G$-invariant, orbitwise-closed and locally-closed.
By Theorem \ref{thm:GZ_strat}, we have $[\widehat{\mathcal{R}}_{P_I}^\star(\Gamma)/\!\!/ G] = [\mathcal{R}_{L_I}^\star(\Gamma)/\!\!/ N_I]$. 
Now, note that $\widehat{\mathcal{R}}^{\ast}_{P_I}(\Gamma)$ are not disjoint. However, if $\rho \in \mathcal{R}_{L_I}^\ast(\Gamma)$ is conjugated to $\rho' \in \mathcal{R}_{L_I'}^\star(\Gamma)$ for $I \neq I'$, then $L_I$ is conjugated to $L_{I'}$ by $w\in W$, and therefore $I \sim_W I'$.

For the second statement, notice that if the sequence splits, then
\[
\mathcal{R}_{L_I}^\star(\Gamma) \sslash N_I = \mathcal{R}_{L_I}^\star(\Gamma) \sslash (L_I \rtimes W_I) =  \left(\mathcal{R}_{L_I}^\star(\Gamma) \sslash L_I\right) \sslash W_I = \mathcal{X}_{L_I}^\star(\Gamma) \sslash W_I\; .
\]
\end{proof}

\section{Motivic computations for ABCD Lie groups}

In this final section we describe the parabolic stratification of the $G$-character variety, for certain groups $G$ of Dynkin type A, B, C and D. The reader con consult the details in \cite[Section 5]{gonzalez2024rootdata}. 

\subsection{Stratification for \texorpdfstring{$\GL_n$}{GLn}-, \texorpdfstring{$\PGL_n$}{PGLn}- and \texorpdfstring{$\SL_n$}{SLn}-, type A}

This case recovers the results in \cite{florentino2023generating}, by re-interpreting them in terms of root data in Lie theory. 

Let us consider first the reductive group $\GL_n$ whose Dynkin diagram  is $\dynkin[labels={1,2,n-2,n-1},label directions={,,,,,},scale=2]A{}$ of type $A_{n-1}$. 
In this case, simple roots are labeled as $\Delta = \{1, \ldots, n-1\}$. 

We fix a basis $(e_1, e_2,\ldots, e_n)$ of $\mathbb{C}^n$ and choose $B$ upper triangular invertible matrices.
Then, the subsets $I \subseteq \Delta\;,\;\Delta\setminus I=\{i_1,i_2,\ldots,i_s\}$ are in bijection with standard parabolic subgroups $P_I$ of the form 
\[    \left(\begin{array}{ccccc|}
        \multicolumn{1}{|c}{\ast} & \ast & \ast & \ast & \ast \\\cline{1-1}
         & \multicolumn{1}{|c}{\ast} & \ast & \ast & \ast \\
         & \multicolumn{1}{|c}{\ast} & \ast & \ast & \ast \\\cline{2-3}
         &  &  & \multicolumn{1}{|c}{\ast} & \ast \\
         &  &  & \multicolumn{1}{|c}{\ast} & \ast \\\cline{4-5}
    \end{array}\right)\]
Parabolics are, in turn, stabilizers of the flag
\[ 0 \subsetneq V_{1} \subsetneq V_{2} \subsetneq \cdots \subsetneq V_{s} \subsetneq \mathbb{C}^n\; ,\] 
where $V_j=\langle e_1,\ldots, e_{i_j} \rangle$. In particular: 
\begin{itemize}
\item $B$ corresponds to $I=\emptyset\;,\;\Delta\setminus I=\Delta=\{1,2,\ldots,n-1\}$ and to the full flag 
\[0\subsetneq V_1\subsetneq V_2\subsetneq \cdots \subsetneq V_{n-1}\subsetneq \mathbb{C}^n\; .\]
\item $\GL_n$ corresponds to $I=\Delta\;,\;\Delta\setminus I=\emptyset$ and to the trivial flag $0\subsetneq \mathbb{C}^n$. 
\item Maximal parabolics $P_I$ correspond to complements of single nodes $I=\Delta\setminus\{i_1\}\;,\;\Delta\setminus I=\{i_i\}$  and to $1$-step flags $0\subsetneq V_{1}\subsetneq \mathbb{C}^n$. 
\end{itemize}
Levi subgroups are $L_I \cong \prod_{j = 1}^{s+1} \GL_{\lambda_j}$, corresponding to matrices of the form 
\[    \left(\begin{array}{ccccc}\cline{1-1}
        \multicolumn{1}{|c|}{\ast} & \multicolumn{4}{r}{} \\\cline{1-3}
         & \multicolumn{1}{|c}{\ast} & \multicolumn{1}{c|}{\ast} & \multicolumn{2}{r}{} \\
         & \multicolumn{1}{|c}{\ast} & \multicolumn{1}{c|}{\ast} & \multicolumn{2}{r}{} \\\cline{2-5}
        \multicolumn{1}{r}{}  &  &  & \multicolumn{1}{|c}{\ast} & \multicolumn{1}{c|}{\ast}\\
        \multicolumn{1}{r}{}  &  &  & \multicolumn{1}{|c}{\ast} & \multicolumn{1}{c|}{\ast}\\\cline{4-5}
    \end{array}\right)\]
which are stabilizers of the graded object of the filtration given by $P_I$, this is $\mathbb{C}^n = \bigoplus_{j = 1}^{s+1} V_j/V_{j-1}$, with $\lambda_j=\dim V_j/V_{j-1}$. In particular:
\begin{itemize}
\item $L_{\emptyset}=T_{\GL_n}$ is the maximal torus.
\item $L_{\Delta}=\GL_n$ is the whole group.
\end{itemize}

Recall (Definition \ref{def:partitions}) the notion of partitions $[k]=[1^{k_{1}}\cdots j^{k_{j}}\cdots n^{k_{n}}]\in \mathcal{P}_{n}$ and notice that these are in bijection with subsets $I / \sim_W \; \in 2^\Delta / \sim_W$, under the action of the Weyl group (permuting blocks of the same size). Observe that the $I$-normalizers are 
\[N_I=N_{\GL_n}(T_I) = N_{[k]}=L_{[k]} \rtimes S_{[k]}\; ,\] therefore, this completely recovers Theorem \ref{thm:stratGLn}, isomorphisms (\ref{eq:K-strata}) and Proposition \ref{prop:e-strat} from \cite{florentino2023generating}.

Let us describe the stratifications for the Langlands dual groups $G=\SL_n$ and $^L G=\PGL_n$ of type $A_{n-1}$, whose Dynkin diagram is also $\dynkin[labels={1,2,n-2,n-1},label directions={,,,,,},scale=2]A{}$. 

The stratification for $\SL_n$ is 
\[\mathcal{X}_{\SL_n}(\Gamma)=\bigsqcup_{[k]\in\mathcal{P}_{n}}\mathcal{X}^{[k]}_{\SL_n}(\Gamma)\; ,\]
where a representation $\rho$ belongs to the $[k]$-stratum if 
\[\rho = \bigoplus_{j=1}^{n}\bigoplus_{\ell = 1}^{k_j}\rho_{j,\ell} \in \mathcal{R}_{L_{[k]}^{\SL_n}}^{\ast}(\Gamma)\; .\] 
Here each $\rho_{j,\ell} \in \mathcal{R}^{\ast}_{\GL_{j}}(\Gamma)$ is irreducible with $\prod_{j,\ell}\det(\rho_{j, \ell}) = 1$.
Associated Levi subgroups are 
\[L_{[k]}^{\SL_n}  = \left\{(A_{j,\ell}) \in \prod_{j=1}^n \GL_{j}^{k_j} \,\left|\, \prod_{j,\ell} \det(A_{j,\ell}) = 1\right.\right\} \subseteq \prod_{j=1}^n \GL_{j}^{k_j}\; ,\]
and each stratum is described as
\[\mathcal{X}^{[k]}_{\SL_n}(\Gamma):=\mathcal{X}_{L_{[k]}^{\SL_n}}^{\ast}(\Gamma)  /\!\!/ S_{[k]} = \left\{(\rho_{j,\ell}) \in \prod_{j=1}^n \mathcal{X}^{\ast}_{\GL_j}(\Gamma)^{k_j} \,\left|\, \prod_{j,\ell} \det(\rho_{j,\ell}) = 1\right.\right\}/\!\!/ S_{[k]}.\]

The stratification for $\PGL_n$ is 
\[\mathcal{X}_{\PGL_n}(\Gamma)=\bigsqcup_{[k]\in\mathcal{P}_{n}}\mathcal{X}^{[k]}_{\PGL_n}(\Gamma)\; ,\]
where a representation $\rho$ belongs to the $[k]$-stratum if 
\[\rho= \left(\bigoplus_{j=1}^{n}\bigoplus_{\ell = 1}^{k_j}\rho_{j,\ell}\right)/\mathbb{C}^{\ast}\in \mathcal{R}_{L_{[k]}^{\PGL_n}}^{\ast}(\Gamma)\; .\] 
Now, each $\rho_{j,\ell} \in \mathcal{R}^{\ast}_{\GL_{j}}(\Gamma)$ is irreducible.
Associated Levi subgroups are 
\[L_{[k]}^{\PGL_n} = \left\{(A_{j,\ell}) \in \prod_{j=1}^n \GL_{j}^{k_j}\right\}/\mathbb{C}^{\ast}\; ,\]
and each stratum is described as
\[\mathcal{X}^{[k]}_{\PGL_n}(\Gamma):=\mathcal{X}_{L_{[k]}^{\PGL_n}}^{\ast}(\Gamma) /\!\!/ S_{[k]} = \left(\left(\prod_{j=1}^n \mathcal{X}^{\ast}_{\GL_j}(\Gamma)^{k_j}\right) / \mathbb{C}^{\ast} \right) /\!\!/ S_{[k]}\; .\]
Topological mirror symmetry conjectures that certain topological invariants should be equal, or mirror in some sense, for geometrical objects constructed out of Langlands dual groups $G$ and $^L G$. In particular, it is conjectured that the motives and $e$-polynomials of the character varieties $\mathcal{X}_G(\Gamma)$ and $\mathcal{X}_{^L G}(\Gamma)$ are equal. This has been proven to be true in some cases, \cite{Hausel_Thaddeus_2003} for $\SL_2, \PGL_2$ and the fundamental group of a Riemann surface, \cite{groechenig2020} for $\SL_n, \PGL_n$ and the fundamental group of a Riemann surface, and \cite{florentino2021serre} for $\SL_n, \PGL_n$ and the free group. Other cases remain as an open problem to the best of our knowledge. 

One of the applications of the stratification of the $G$-character variety into parabolic types is that it is motivic, then checking mirror symmetry boils down to check it strata by strata. 

\begin{Theorem}\cite{florentino2021serre, gonzalez2024rootdata}\label{thm:TMS_SL_PGL}
Topological mirror symmetry holds $[\mathcal{X}_{\SL_n}(\Gamma)] = [\mathcal{X}_{\PGL_n}(\Gamma)]$ (resp.  $e(\mathcal{X}_{\SL_n}(\Gamma)) = e(\mathcal{X}_{\PGL_n}(\Gamma))$) if and only if $[\mathcal{X}^{[k]}_{\SL_n}(\Gamma)] = [\mathcal{X}^{[k]}_{\PGL_n}(\Gamma)]$ (resp. $e(\mathcal{X}^{[k]}_{\SL_n}(\Gamma)) = e(\mathcal{X}^{[k]}_{\PGL_n}(\Gamma))$) holds strata by strata.
\end{Theorem}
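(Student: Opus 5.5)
The plan is to derive both directions from the parabolic stratification of Theorem \ref{thm:moticivGZ}, applied to $G=\SL_n$ and ${}^LG=\PGL_n$. The key observation is that $\SL_n$ and $\PGL_n$ have the same Dynkin diagram and the same Weyl group $S_n$. So $2^\Delta/\sim_W$ is the same set $\mathcal{P}_n$ for both, and the strata $\mathcal{X}^{[k]}_{\SL_n}(\Gamma)$ and $\mathcal{X}^{[k]}_{\PGL_n}(\Gamma)$ come paired by the partition $[k]$. In both cases the sequence $1\to L_{[k]}\to N_{[k]}\to S_{[k]}\to 1$ splits, since permutation-type matrices, corrected by a sign for $\SL_n$, give a section. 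Therefore Theorem \ref{thm:moticivGZ} yields
\[
[\mathcal{X}_{\SL_n}(\Gamma)]=\sum_{[k]\in\mathcal{P}_n}[\mathcal{X}^{[k]}_{\SL_n}(\Gamma)],\qquad [\mathcal{X}_{\PGL_n}(\Gamma)]=\sum_{[k]\in\mathcal{P}_n}[\mathcal{X}^{[k]}_{\PGL_n}(\Gamma)],
\]
with $\mathcal{X}^{[k]}=\mathcal{X}^\ast_{L_{[k]}}(\Gamma)/\!\!/S_{[k]}$ in each case. Applying $e:{\rm KVar}\to\mathbb{Z}[u,v]$ (Remark \ref{rem:motives}) gives the same identities for $e$-polynomials.

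The ``if'' direction follows at once: if the strata agree for every $[k]$, summing over $\mathcal{P}_n$ gives $[\mathcal{X}_{\SL_n}(\Gamma)]=[\mathcal{X}_{\PGL_n}(\Gamma)]$, and likewise for $e$.

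For the ``only if'' direction, knowing only that the two sums agree is not enough; I would show that each reducible stratum matches separately, so that the leftover irreducible strata $[k]=[n]$ must also match. To compare a single stratum, set $Y_{[k]}=\prod_j \mathcal{X}^\ast_{\GL_j}(\Gamma)^{k_j}$. The torus $\Hom(\Gamma,\mathbb{C}^\ast)$ acts on $Y_{[k]}$ by twisting every block by the same character, and there is a determinant map $\det:Y_{[k]}\to \Hom(\Gamma,\mathbb{C}^\ast)$. The $\SL_n$-stratum is the fibre $\det^{-1}(1)/\!\!/S_{[k]}$, and the $\PGL_n$-stratum is the quotient $(Y_{[k]}/\Hom(\Gamma,\mathbb{C}^\ast))/\!\!/S_{[k]}$. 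Twisting by a character $\chi$ multiplies the determinant by $\chi^n$. Hence the fibre over $1$ carries an action of the $n$-torsion group $\Hom(\Gamma,\mu_n)$, and the quotient of that fibre by this finite group is the $\PGL_n$-stratum. Using Remark \ref{rem:fibrations}(d) to handle the central torus, I would argue that equality of the two stratum motives is equivalent to $\Hom(\Gamma,\mu_n)$ acting trivially on the motive (or on the cohomology) of the $\SL_n$-stratum. I would then prove this triviality by induction on $n$: a stratum with $|[k]|\ge 2$ is built from irreducible $\GL_j$-pieces with $j<n$, which are controlled by the inductive hypothesis.

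The main obstacle is this finite-group step. The $\mu_n$-twists have isolated fixed loci on strata made of repeated blocks, for example $[d^{n/d}]$, and there $S_{[k]}$ and the twisting group interact; this is exactly the rectangular-partition subtlety already noted for $\GL_4$. I would first try to isolate these fixed loci using the $S_{[k]}$-equivariant motivic computations of \cite{florentino2021serre}, comparing $\det^{-1}(1)$ with its twisted fibres over the other $n$-torsion points. If that fails for general $\Gamma$, I would settle for the weaker but still useful form in which the ``only if'' direction is read as: the equality is always proven through the stratum-by-stratum comparison. In that reading the substantive content is the ``if'' direction together with the reduction of each stratum to the fibre-versus-quotient comparison above.
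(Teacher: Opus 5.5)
Your ``if'' direction is correct and is exactly the paper's argument. The paper's only justification for this theorem is the remark that the parabolic stratification of Theorem \ref{thm:moticivGZ} is motivic. With the common index set $\mathcal{P}_n$, both classes are then sums of stratum classes, so stratum-wise equality sums to global equality.

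One correction there. The sequence $1\to L_{[k]}\to N_{[k]}\to S_{[k]}\to 1$ does \emph{not} split for $\SL_n$ in general. For instance, in $\SL_2$ every element of $N(T)\setminus T$ squares to $-I$, and $\pm P_{(12)}$ has determinant $-1$. You do not need the splitting, though: $L_I$ is normal in $N_I$, so taking the quotient in stages gives $\mathcal{R}^\ast_{L_I}(\Gamma)/\!\!/N_I=\mathcal{X}^\ast_{L_I}(\Gamma)/\!\!/W_I$ anyway.

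The ``only if'' direction is a genuine gap. Equal sums do not force equal summands, and nothing in the paper supplies the converse. Your plan is logically sound: prove that every reducible stratum matches unconditionally, then cancel in ${\rm KVar}$. But it is not carried out, and its key steps fail as stated.

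\textbf{The fibre-versus-quotient identification.} Identifying the $\PGL_n$-stratum with $\det^{-1}(1)/\Hom(\Gamma,\mu_n)$ (modulo $S_{[k]}$) needs two things. Every character of $\Gamma$ must be an $n$-th power, and every $L^{\PGL_n}_{[k]}$-representation must lift. Both fail for general $\Gamma$. As a result, your claimed equivalence between ``strata agree'' and ``the twisting action is trivial'' is false. Take $\Gamma=\mathbb{Z}/2$, $n=2$, $[k]=[1^2]$:
\begin{itemize}
\item the $\SL_2$-stratum is $\{\pm I\}$, and the nontrivial twist swaps these two points;
\item $\det^{-1}(1)/\mu_2$ is a single point;
\item the $\PGL_2$-stratum $\{[I],[\mathrm{diag}(1,-1)]\}$ has two points.
\end{itemize}
So the strata agree while the action on $H^0$ is nontrivial, and $\det^{-1}(1)/\mu_2$ is not the $\PGL_2$-stratum.

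\textbf{Motives.} ``Acting trivially on the motive'' has no meaning in ${\rm KVar}$. This route could at best give the $e$-polynomial version, through equivariant $e$-polynomials, not the motivic one.

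\textbf{The induction.} Even for $\Gamma=F_r$, where your picture is valid, the induction has no mechanism. The diagonal $\mu_n$-twist on $\{\prod_{j,\ell}\det\rho_{j,\ell}=1\}$ lies in the identity component of the torus $\{(c_{j,\ell}) : \prod c_{j,\ell}^{\,j}=1\}$ of independent block twists only when $\gcd\{j:k_j>0\}=1$. For strata such as $[2^2]$ the residual twist survives and interacts with $S_{[k]}$. An inductive hypothesis of the form $e(\mathcal{X}_{\SL_j}(\Gamma))=e(\mathcal{X}_{\PGL_j}(\Gamma))$ for $j<n$ is too weak to control this. You would need an equivariant statement about the twisting action on $H^\ast_c(\mathcal{X}^\ast_{\GL_j}(\Gamma))$, compatible with the symmetric group, and you neither formulate nor prove one.

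\textbf{The fallback.} Your fallback ``reading'' changes the statement rather than proving it. As written, you have proved only the ``if'' direction. The converse needs an unconditional comparison of each reducible stratum, i.e.\ mirror symmetry for the dual Levis $L^{\SL_n}_{[k]}$ and $L^{\PGL_n}_{[k]}$ with their $S_{[k]}$-actions. That is immediate only in small cases where the two Levis are isomorphic, such as $[1^2]$ for $n=2$ or $[1\,2]$ for $n=3$.
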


\subsection{\texorpdfstring{$\Sp_{2n}$}{Sp(2n)}, type C and \texorpdfstring{$\SO_{2n+1}$}{SO(2n+1)}, type B}

Let us begin by considering $G=\Sp_{2n}$ whose Dynkin diagram is $\dynkin[labels={1,2,n-2,n-1,n},label directions={,,,,,},scale=2]C{}$ of type $C_{n}$. Here, simple roots are $\Delta=\{1,\ldots,n\}$ with $n$ the unique long root. 

We fix a basis $(e_1,\ldots,e_n,f_1,\ldots,f_n)$ of $\mathbb{C}^{2n}$ such that $\omega=\sum_{i=1}^n e_i^*\wedge f_i^*$ is the standard symplectic form on $\mathbb{C}^{2n}$. Given vectors $u,v\in\mathbb{C}^{2n}$, with coordinates $(x^u_1, \ldots, x^u_n, y^u_1, \ldots, y^u_n)$ and $(x^v_1, \ldots, x^v_n, y^v_1, \ldots, y^v_n)$ respect to the basis, we have $\omega(u, v)=\sum_{i=1}^n x_i^u y_i^v - y_i^u x_i^v$. 

Subsets $I \subseteq \Delta$, $\Delta \setminus I = \{i_1, \ldots, i_s\}$ are in bijection with standard parabolic subgroups $P_I$ which are stabilizers of the flag 
\[0 \subsetneq V_{1} \subsetneq \cdots \subsetneq V_{j} \subsetneq \cdots \subsetneq V_{s} \subsetneq V_{s}^\perp \subsetneq V_{{s-1}}^\perp \subsetneq \cdots \subsetneq V_{1}^\perp \subsetneq \mathbb{C}^{2n}\; ,\]  
where $V_{j} = \langle x_{1}, \ldots, x_{i_j}\rangle$ for $1 \leq j \leq s$ are \textbf{isotropic subspaces} (under the symplectic form) and $V_j^\perp = \langle x_1, \ldots, x_n, y_{i_j+1}, \ldots, y_n\rangle$.
There is a special maximal parabolic for $I=\Delta\setminus\{n\}$ yielding a \textbf{lagrangian} (maximal isotropic) flag 
\[0\subsetneq V_1=\langle x_1,\ldots, x_n\rangle \subsetneq \mathbb{C}^{2n}.\]

The action of the \textbf{Weyl group} $W = \mathbb{Z}_2^n \rtimes S_n$ on $2^\Delta$ decomposes it into two classes, 
\[2^\Delta = \Omega_{n} \sqcup \overline{\Omega}_n\]
each defined by whether its elements contain the long root $n$ or not. Each class has these features:
\begin{itemize}
\item $I\in \overline{\Omega}_{n}=\{I:n\notin I\}$, $\Delta \setminus I = \{i_1, \ldots, i_s = n\}$
\begin{itemize}
    \item Equivalence classes in $\overline{\Omega}_n / \sim_W$ correspond to partitions $[k] = [1^{k_{1}}\cdots j^{k_{j}}\cdots n^{k_{n}}] \in \mathcal{P}_n$.
  \item The flag contains the lagrangian subspace $V_{i_s} = \langle x_1, \ldots, x_n\rangle=V_{i_s}^{\perp}$. 
    \item Levi subgroup $L_{[k]} = L_{I_{[k]}} = \prod_{j=1}^n \GL_{j}^{k_j}$ is  stabilizer of the splitting of the flag $V_{\bullet}\subset \mathbb{C}^{2n}$.
    \item Weyl group is $W_{[k]} = \mathbb{C}_2^{|[k]|} \rtimes S_{[k]}$ and normalizer is $N_{[k]} = L_{[k]} \rtimes W_{[k]}$, then strata are 
    \[\mathcal{X}_{\GL_{[k]}}^{\ast} /\!\!/ (\mathbb{Z}_2^{|[k]|} \rtimes S_{[k]})\; .\]
 \end{itemize}
\item $I\in \Omega_{n}=\{I:n\in I\}$ $\Delta \setminus I = \{i_1, \ldots, i_s<n\}$
\begin{itemize}
\item Equivalence classes in $\Omega_n / \sim_W$ correspond to partitions $I_{n, [k]}:=[k] \cup \{n\}$, $[k]= [1^{k_{1}}\cdots j^{k_{j}}\cdots m^{k_{m}}] \in \mathcal{P}_m$, $m < n$.
\item The flag does not contain the lagrangian subspace $\langle x_1, \ldots, x_n\rangle$.
\item  Levi subgroup $L_{n,[k]} = L_{I_{[k]}} = \prod_{j=1}^m \GL_{j}^{k_j} \times \Sp_{2(n-m)}$ is stabilizer of the splitting of the flag $V_{\bullet}\subset \mathbb{C}^{2n}$.
\item Weyl group is $W_{n,[k]} = \mathbb{Z}_2^{|[k]|} \rtimes S_{[k]}$ and normalizer is $N_{n,[k]} = L_{n,[k]} \rtimes W_{n,[k]}$, then strata are \[\mathcal{X}_{\GL_{[k]}\times \Sp_{2(n-m)}}^{\ast} /\!\!/ (\mathbb{Z}_2^{|[k]|} \rtimes S_{[k]})\; .\]
\end{itemize}
\end{itemize}

From this, we can decompose motivically the $\Sp_{2n}$-character variety. 
\begin{Theorem}\cite{gonzalez2024rootdata}\label{thm:stratSp}
The parabolic stratification of the $\Sp_{2n}$-character variety of a group $\Gamma$ yields the following decomposition of the $e$-polynomial:
\begin{align*} e\left( \mathcal{X}_{\Sp_{2n}}(\Gamma) \right) & = \sum_{[k]\in\mathcal{P}_{n}} e\left(\mathcal{X}_{\GL_{[k]}}^{\ast}(\Gamma) /\!\!/(\mathbb{Z}_2^{|[k]|} \rtimes S_{[k]}) \right) \\
& + \sum_{m=1}^{n-1} \sum_{[k] \in \mathcal{P}_m} e\left(\mathcal{X}_{\GL_{[k]}\times \Sp_{2(n-m)}}^{\ast}(\Gamma) /\!\!/ (\mathbb{Z}_2^{|[k]|} \rtimes S_{[k]}) \right)\; .\end{align*}
\end{Theorem}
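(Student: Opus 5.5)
The plan is to specialize Theorem \ref{thm:moticivGZ} to $G=\Sp_{2n}$. That theorem already gives
\[e\left(\mathcal{X}_{\Sp_{2n}}(\Gamma)\right)=\sum_{I\in 2^{\Delta}/\sim_W} e\left(\mathcal{R}^{\ast}_{L_I}(\Gamma)/\!\!/N_I\right),\]
so three things remain. First, enumerate the $W$-orbits on $2^\Delta$. Second, identify $L_I$ and $W_I$ for a representative of each orbit. Third, check that $1\to L_I\to N_I\to W_I\to 1$ splits, so that the refined form $e(\mathcal{X}^{\ast}_{L_I}(\Gamma)/\!\!/W_I)$ applies.

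\textbf{Orbits.} Write $\Delta\setminus I=\{i_1<\dots<i_s\}$. The flag stabilized by $P_I$ splits $\mathbb{C}^{2n}$ into three kinds of pieces: the pairs $V_j/V_{j-1}\oplus (V_{j-1}^\perp/V_j^\perp)$ of dual isotropic blocks of sizes $\lambda_j=i_j-i_{j-1}$, and a middle symplectic block of dimension $2(n-i_s)$.
\begin{itemize}
\item If $n\notin I$, then $i_s=n$ and there is no middle block. The unordered multiset of the $\lambda_j$ is a partition $[k]\in\mathcal{P}_n$.
\item If $n\in I$, then $m:=i_s<n$ and the middle block is $\Sp_{2(n-m)}$. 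The block sizes give a partition $[k]\in\mathcal{P}_m$, with $0\le m\le n-1$.
\end{itemize}
The Weyl group $W=\mathbb{Z}_2^n\rtimes S_n$ acts on these by permuting the $\GL$-blocks, which permits reordering within the partition, and the sign changes do not alter the block sizes. Two subsets are therefore $W$-conjugate exactly when they have the same $m$ and the same partition. This is the dichotomy $2^\Delta=\Omega_n\sqcup\overline{\Omega}_n$ described above. To justify it, I would use the standard criterion that $L_I$ and $L_{I'}$ are conjugate iff $I\sim_W I'$, applied to the explicit Levi subgroups.

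\textbf{Levis and Weyl groups.} The Levi is $L_I=\prod_j \GL_j^{k_j}\times \Sp_{2(n-m)}$, where the $\Sp$ factor is absent when $m=n$. Here $\GL_\lambda$ acts on $U\oplus U^\ast$ by $A\mapsto (A,A^{-T})$. The torus $T_I$ is the product of the centers of the $\GL$ factors. Its normalizer acts by permuting blocks of equal size, giving $S_{[k]}$, and by swapping $U\leftrightarrow U^\ast$ in each block, which inverts that block's central $\mathbb{C}^\ast$ and gives a $\mathbb{Z}_2$ per block. Hence $W_I=\mathbb{Z}_2^{|[k]|}\rtimes S_{[k]}$.

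\textbf{Splitting.} I would lift $W_I$ explicitly inside $\Sp_{2n}$. For $S_{[k]}$, use permutation matrices acting simultaneously on the $e$- and $f$-blocks. For each $\mathbb{Z}_2$, use the block matrix $\begin{pmatrix}0&\mathrm{Id}\\-\mathrm{Id}&0\end{pmatrix}$ on $U\oplus U^\ast$.

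I expect this splitting to be the main obstacle. Each such block lift squares to $-\mathrm{Id}$ on its block rather than to $\mathrm{Id}$, so it does not give an honest section. To fix this, I would look for a better lift, for example one that also composes with a suitable element of $\GL_\lambda$, but this may fail. If it does, I would fall back on the unsplit form $e(\mathcal{R}^{\ast}_{L_I}/\!\!/N_I)$. I would then argue that $-\mathrm{Id}$ lies in $L_I$ and acts trivially by conjugation on $\mathcal{R}_{L_I}$. Consequently $N_I$ acts on $\mathcal{X}^{\ast}_{L_I}=\mathcal{R}^{\ast}_{L_I}/\!\!/L_I$ through $W_I$, and $\mathcal{R}^{\ast}_{L_I}/\!\!/N_I=\mathcal{X}^{\ast}_{L_I}/\!\!/W_I$ holds by taking the quotient in stages, which is all the formula requires.

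\textbf{Assembling.} Summing over the orbits gives the statement. The case $n\notin I$ produces the first sum over $[k]\in\mathcal{P}_n$. The case $n\in I$ produces the second sum over $m\le n-1$ and $[k]\in\mathcal{P}_m$. For $m=0$ we have $I=\Delta$, whose term is the irreducible stratum $\mathcal{X}^\ast_{\Sp_{2n}}$; I would absorb it into the second sum's convention.
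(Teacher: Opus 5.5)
Your argument follows the paper's route. You specialize Theorem~\ref{thm:moticivGZ} to $\Sp_{2n}$ using the orbit decomposition $2^\Delta=\Omega_n\sqcup\overline{\Omega}_n$, the Levis $\prod_j\GL_j^{k_j}$ (times $\Sp_{2(n-m)}$), and $W_I=\mathbb{Z}_2^{|[k]|}\rtimes S_{[k]}$.

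You are right to distrust the splitting, and your fallback is sounder than the text. The paper simply asserts $N_{[k]}=L_{[k]}\rtimes W_{[k]}$. However, any lift of the swap on a block of size $\lambda$ has the form $\begin{pmatrix}0&A^{-T}\\-A&0\end{pmatrix}$. Its square is the identity only if $A$ is invertible and skew-symmetric, which is impossible for $\lambda$ odd; already $N_{\SL_2}(T)\to\mathbb{Z}_2$ does not split. Splitting is not needed. Since $L_I\trianglelefteq N_I$, good quotients can be taken in stages:
$\mathcal{R}^{\ast}_{L_I}(\Gamma)/\!\!/N_I\cong\big(\mathcal{R}^{\ast}_{L_I}(\Gamma)/\!\!/L_I\big)/\!\!/(N_I/L_I)=\mathcal{X}^{\ast}_{L_I}(\Gamma)/\!\!/W_I$.
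Your remark about $-\mathrm{Id}$ is beside the point; normality of $L_I$ in $N_I$ is all that is used.

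The step that does not go through is the last one. The orbit $I=\Delta$ (that is, $m=0$) contributes $e(\mathcal{X}^{\ast}_{\Sp_{2n}}(\Gamma))$. No convention lets this term sit in a sum that explicitly starts at $m=1$, and the term is not negligible. Take $n=1$ and $\Gamma=F_2$: the printed formula reads $e(\mathcal{X}_{\SL_2}(F_2))=e\big((\mathbb{C}^{\ast})^2/\!\!/\mathbb{Z}_2\big)$, with $\mathbb{Z}_2$ acting by inversion. The left side is $e(\mathbb{C}^3)=x^3$, while the right side has degree $2$. So the identity as printed is missing the open stratum.

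What your argument (like the paper's) actually proves is the formula with the second sum running over $0\le m\le n-1$ and $\mathcal{P}_0=\{\emptyset\}$. Equivalently, there is an extra summand $e(\mathcal{X}^{\ast}_{\Sp_{2n}}(\Gamma))$. This corrected form is the one used in Corollary~\ref{cor:TMSSpSO}, and it matches Theorem~\ref{thm:stratSOeven}, where $m$ starts at $0$. State the correction explicitly instead of appealing to a convention.
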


Now we reproduce the analogue for the group $^L G=\SO_{2n+1}$ whose Dynkin diagram  $\dynkin[labels={1,2,n-2,n-1,n},label directions={,,,,,},scale=2]B{}$ of type $B_{n}$. Here, 
simple roots are $\Delta=\{1,\ldots,n\}$, which $n$ the unique short root. 

Fix a basis of $\mathbb{C}^{2n+1}$ with respect to which the quadratic form $Q:\mathbb{C}^{2n+1}\rightarrow \mathbb{C}$ is 
$Q(v)=x_1y_1+\cdots+x_ny_n+z^2$, where $(x_1,\ldots, x_n, y_1\ldots, y_n,z)$ are the coordinates of $v\in\mathbb{C}^{2n+1}$ in the chosen basis. Again, subsets $I \subseteq \Delta$, $\Delta \setminus I = \{i_1, \ldots, i_s\}$ are in bijection with standard parabolic subgroups $P_I$ which are stabilizers of the flag 
\[0 \subsetneq V_{1} \subsetneq \cdots \subsetneq V_{j} \subsetneq \cdots \subsetneq V_{s} \subsetneq V_{s}^\perp \subsetneq V_{{s-1}}^\perp \subsetneq \cdots \subsetneq V_{1}^\perp \subsetneq \mathbb{C}^{2n+1}\; ,\]  
where $V_{j} = \langle x_{1}, \ldots, x_{i_j}\rangle$ for $1 \leq j \leq s$ are \textbf{isotropic subspaces} and $V_j^\perp = \langle x_1, \ldots, x_n, y_{i_j+1}, \ldots, y_n, z\rangle$.

The action of the Weyl group $W = \mathbb{Z}_2^n \rtimes S_n$ on $2^\Delta$ decomposes it again into two classes,
\[2^\Delta =\Omega_{n} \sqcup \overline{\Omega}_n\]
each defined by whether its elements contain short root $n$ or not. Each class has these properties:
\begin{itemize}
    \item $I\in \overline{\Omega}_{n}=\{I:n\notin I\}$, $\Delta \setminus I = \{i_1, \ldots, i_s = n\}$
\begin{itemize}
    \item Equivalence classes in $\overline{\Omega}_n / \sim_W$ correspond to partitions $ [k] = [1^{k_{1}}\cdots j^{k_{j}}\cdots n^{k_{n}}] \in \mathcal{P}_n$.
  \item The flag contains the maximal isotropic subspace  $V_{i_s} = \langle x_1, \ldots, x_n\rangle$, with \linebreak $V_{i_s}^{\perp}=\langle x_1, \ldots, x_n, z\rangle$.
    \item Levi subgroup $L_{[k]} = L_{I_{[k]}} = \prod_{j=1}^n \GL_{j}^{k_j}$ is the stabilizer of the splitting (acting trivially on $\langle z\rangle = V_s^{\perp}/V_s$).
    \item Weyl group is $W_{[k]} = \mathbb{Z}_2^{|[k]|} \rtimes S_{[k]}$ and normalizer is $N_{[k]} = L_{[k]} \rtimes W_{[k]}$, then strata are \[\mathcal{X}_{\GL_{[k]}}^{\ast} /\!\!/ (\mathbb{Z}_2^{|[k]|} \rtimes S_{[k]})\; .\]
 \end{itemize} 
\item $I\in \Omega_{n}=\{I:n\in I\}$ $\Delta \setminus I = \{i_1, \ldots, i_s<n\}$
\begin{itemize}
\item Equivalence classes in $\Omega_n / \sim_W$ correspond to partitions $I_{n, [k]}:=[k] \cup \{n\}$, \linebreak $[k]= [1^{k_{1}}\cdots j^{k_{j}}\cdots m^{k_{m}}] \in \mathcal{P}_m$, $m < n$.
\item The flag does not contain the maximal isotropic subspace $\langle x_1, \ldots, x_n\rangle$.
\item  Levi subgroup $L_{n,[k]} = L_{I_{[k]}} = \prod_{j=1}^m \GL_{j}^{k_j} \times \SO_{2(n-m)+1}$ is the stabilizer of the splitting. 
\item Weyl group is $W_{n,[k]} = \mathbb{Z}_2^{|[k]|} \rtimes S_{[k]}$ and normalizer is $N_{n,[k]} = L_{n,[k]} \rtimes W_{n,[k]}$, then strata are \[\mathcal{X}_{\GL_{[k]}\times \SO_{2(n-m)+1}}^{\ast} /\!\!/ (\mathbb{Z}_2^{|[k]|} \rtimes S_{[k]})\; .\]
\end{itemize}
\end{itemize}

Again, this allows to decompose motivically the $\SO_{2n+1}$-character variety. 

\begin{Theorem}\cite{gonzalez2024rootdata}\label{thm:stratSOodd}
The parabolic stratification of the $\SO_{2n+1}$-character variety of a group $\Gamma$
yields the following decomposition of the $e$-polynomial:
\begin{align*} e\left( \mathcal{X}_{\SO_{2n+1}}(\Gamma) \right) & = \sum_{[k]\in\mathcal{P}_{n}} e\left(\mathcal{X}_{\GL_{[k]}}^{\ast}(\Gamma) /\!\!/(\mathbb{Z}_2^{|[k]|} \rtimes S_{[k]}) \right)\\
& + \sum_{m=1}^{n-1} \sum_{[k] \in \mathcal{P}_m} e\left(\mathcal{X}_{\GL_{[k]}\times \SO_{2(n-m)+1}}^{\ast}(\Gamma) /\!\!/ (\mathbb{Z}_2^{|[k]|} \rtimes S_{[k]}) \right).
\end{align*}
\end{Theorem}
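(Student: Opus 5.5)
The plan is to apply Theorem \ref{thm:moticivGZ} to $G=\SO_{2n+1}$. It gives
\[e\left(\mathcal{X}_{\SO_{2n+1}}(\Gamma)\right)=\sum_{I\in 2^{\Delta}/\sim_W} e\left(\mathcal{R}^{\ast}_{L_I}(\Gamma)/\!\!/N_I\right).\]
Two things then remain: to identify the $W$-orbits on $2^\Delta$ using the $B_n$ description above, and to check that each sequence $1\to L_I\to N_I\to W_I\to 1$ splits. Once both are done, the simplified form $e(\mathcal{X}^{\ast}_{L_I}(\Gamma)/\!\!/W_I)$ in the second part of Theorem \ref{thm:moticivGZ} applies.

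First I will classify the orbits. Take $T$ to be the diagonal torus acting with weights $(t_1,\dots,t_n,t_1^{-1},\dots,t_n^{-1},1)$ on the basis $x_i,y_i,z$. Then $W=\mathbb{Z}_2^n\rtimes S_n$ acts by signed permutations of the $t_i$. For $\Delta\setminus I=\{i_1<\dots<i_s\}$, the subtorus $T_I$ is determined by the block sizes $\lambda_j=i_j-i_{j-1}$.

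If $n\notin I$, then $i_s=n$ and the blocks exhaust $\{1,\dots,n\}$. Hence $L_I=Z_G(T_I)=\prod_j \GL_{\lambda_j}$, acting on $V_j/V_{j-1}$ and dually on the $y$-blocks, and trivially on $\langle z\rangle$.

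If $n\in I$, the last block $\{i_s+1,\dots,n\}$ together with $z$ spans a nondegenerate subspace of dimension $2(n-m)+1$, where $m=i_s$. In that case $L_I=\prod_j\GL_{\lambda_j}\times\SO_{2(n-m)+1}$.

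Since $S_n\subset W$ permutes blocks of equal size, two such $I$ are $W$-equivalent exactly when they have the same multiset of $\GL$-block sizes and lie in the same class $\Omega_n$ or $\overline{\Omega}_n$. This gives partitions $[k]\in\mathcal{P}_n$ for $\overline{\Omega}_n$, and partitions $[k]\in\mathcal{P}_m$ with $0\le m<n$ for $\Omega_n$.

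The case $m=0$ is $I=\Delta$, the irreducible stratum $\mathcal{X}^{\ast}_{\SO_{2n+1}}(\Gamma)$. I will have to reconcile it with the displayed sum, which starts at $m=1$. Either it is meant to be included with $[k]$ the empty partition, or I will add it explicitly. I must also rule out any extra identification between the two classes: the rank of the $\SO$ factor is a conjugation invariant of $L_I$, so no $W$-element can identify them.

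Next I will compute $W_I=N_G(T_I)/L_I$. The normalizer of $T_I$ can permute the $\GL_j$-blocks of equal size, giving $S_{[k]}$. It can also apply, blockwise, the involution swapping $x$- and $y$-coordinates; this inverts the corresponding factor of $T_I$, giving $\mathbb{Z}_2^{|[k]|}$. So $W_I=\mathbb{Z}_2^{|[k]|}\rtimes S_{[k]}$.

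\textbf{The main obstacle is the splitting.} For $\SO_{2n+1}$, the naive swap $x_i\leftrightarrow y_i$ on a block of size $j$ has determinant $(-1)^j$, so it need not lie in $\SO$. I will correct it by multiplying by $-1$ on the $z$-line, or on the complementary $\SO$ factor's odd part when $n\notin I$, where $z$ is fixed. The determinant is then $1$, the map still preserves $Q$, and it commutes with the other blocks. I will then check that these lifts, together with block permutation matrices (which have determinant $1$ once $x$- and $y$-blocks are permuted simultaneously), generate a subgroup of $N_I$ isomorphic to $\mathbb{Z}_2^{|[k]|}\rtimes S_{[k]}$. For this I need the lifts to square to the identity and the sign corrections on $z$ to commute. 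If the $z$-sign causes a non-commuting issue, I will instead use the map $x_i\mapsto y_i$, $y_i\mapsto -x_i$ composed with an appropriate scalar. The odd dimension $2n+1$ is exactly what leaves room for such a correction.

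With the splitting established, Theorem \ref{thm:moticivGZ} gives $N_I=L_I\rtimes W_I$ and hence the strata $\mathcal{X}^{\ast}_{L_I}(\Gamma)/\!\!/(\mathbb{Z}_2^{|[k]|}\rtimes S_{[k]})$. Summing over the two families of orbit representatives yields the stated formula.
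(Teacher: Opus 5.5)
Your proposal is correct and is essentially the paper's argument: Theorem~\ref{thm:moticivGZ}, together with the $W$-orbit decomposition $2^\Delta=\Omega_n\sqcup\overline{\Omega}_n$, the Levis $\GL_{[k]}$ and $\GL_{[k]}\times\SO_{2(n-m)+1}$, $W_I=\mathbb{Z}_2^{|[k]|}\rtimes S_{[k]}$ and $N_I=L_I\rtimes W_I$. Your primary lift (a block swap composed with $z\mapsto(-1)^{j}z$) already works, so drop the fallback $x_i\mapsto y_i$, $y_i\mapsto -x_i$, which does not even preserve $Q$; note also that, since $L_I$ is normal in $N_I$ with finite quotient, quotienting in stages gives $\mathcal{R}^{\ast}_{L_I}(\Gamma)/\!\!/N_I\cong\mathcal{X}^{\ast}_{L_I}(\Gamma)/\!\!/W_I$ even without a splitting. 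You are right that the displayed formula omits the $I=\Delta$ ($m=0$) term $e(\mathcal{X}^{\ast}_{\SO_{2n+1}}(\Gamma))$: for $n=1$ it would reduce $e(\mathcal{X}_{\SO_3}(\Gamma))$ to $e(\mathcal{X}^{\ast}_{\GL_1}(\Gamma)/\!\!/\mathbb{Z}_2)$, and since the sum starts at $m=1$ this term cannot be absorbed as an empty partition, so add it explicitly, as in Theorem~\ref{thm:stratSOeven} (where $m$ starts at $0$) and Corollary~\ref{cor:TMSSpSO}.
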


By comparing the expressions in Theorems \ref{thm:stratSp} and \ref{thm:stratSOodd}, we observe that topological mirror symmetry holds for the Langlands dual groups $G=\Sp_{2n}$ and $^L G= \SO_{2n+1}$ if and only if it holds strata by strata. Notice that first summand is equal in both formulae, then equality holds if it does for the second summand. 

\begin{Corollary}\cite{gonzalez2024rootdata}\label{cor:TMSSpSO}
For the character varieties of the Langlands dual groups $\Sp_{2n}$ and $\SO_{2n+1}$, topological mirror symmetry (i.e. equality of motives or $e$-polynomials) reduces to show it for the irreducible character varieties $\mathcal{X}^{\ast}_{\Sp_{2n}}(\Gamma)$ and $\mathcal{X}^{\ast}_{\SO_{2n+1}}(\Gamma)$ and for each
\[\mathcal{X}_{\GL_{[k]} \times \Sp_{2(n-m)}}^{\ast}(\Gamma)/\!\!/ (\mathbb{Z}_2^{|[k]|} \rtimes S_{[k]})\quad \textrm{and}\quad \mathcal{X}_{\GL_{[k]}\times \SO_{2(n-m)+1}}^{\ast}(\Gamma)/\!\!/(\mathbb{Z}_2^{|[k]|} \rtimes S_{[k]})\; ,\]
where $[k]\in\mathcal{P}_m$ and $m=1,\ldots, n-1$. 
\end{Corollary}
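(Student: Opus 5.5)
The plan is to compare the two decompositions of Theorems \ref{thm:stratSp} and \ref{thm:stratSOodd} term by term, and then use additivity of motives and $e$-polynomials (Proposition \ref{prop:e-strat} and Remark \ref{rem:motives}).

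First, I would index both sums by the same combinatorial data. For $\Sp_{2n}$ and $\SO_{2n+1}$ the Weyl group is $\mathbb{Z}_2^n\rtimes S_n$ in both cases, and the classes $2^\Delta/\sim_W$ split as $\overline{\Omega}_n\sqcup\Omega_n$. The $\overline{\Omega}_n$-classes are parametrized by $[k]\in\mathcal{P}_n$, with the identical Levi $\prod_j\GL_j^{k_j}$ and the identical finite group $\mathbb{Z}_2^{|[k]|}\rtimes S_{[k]}$. I would check that the $\mathbb{Z}_2$ factors act in the same way on $\mathcal{X}^{\ast}_{\GL_{[k]}}(\Gamma)$ on both sides, namely by $\rho_{j,\ell}\mapsto(\rho_{j,\ell}^{t})^{-1}$, i.e. the dual representation. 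In $\Sp_{2n}$ this comes from the element swapping $x_i\leftrightarrow y_i$ up to sign, and in $\SO_{2n+1}$ from the swap combined with $z\mapsto -z$. Both realize the outer automorphism of the block $\GL_j$ given by inverse transpose, up to inner automorphisms of $L_{[k]}$. Hence these strata are literally isomorphic varieties, and their motives and $e$-polynomials coincide. This is the step I expect to need the most care: one must verify that the chosen lifts of $W_{[k]}$ to $N_{[k]}$ induce the same action on the Levi character variety in both groups, modulo $L_{[k]}$-conjugation.

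Second, in the $\Omega_n$-part, I would split off the term $m=0$, which is the class $I=\Delta$. Its Levi is the whole group and its Weyl group $W_I$ is trivial, so its contribution is the irreducible character variety $\mathcal{X}^{\ast}_{\Sp_{2n}}(\Gamma)$, respectively $\mathcal{X}^{\ast}_{\SO_{2n+1}}(\Gamma)$. The remaining terms, for $1\le m\le n-1$ and $[k]\in\mathcal{P}_m$, are exactly the quotients listed in the statement.

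Third, I would subtract. Since the $\overline{\Omega}_n$ sums agree, the identity $[\mathcal{X}_{\Sp_{2n}}(\Gamma)]=[\mathcal{X}_{\SO_{2n+1}}(\Gamma)]$ is equivalent to the equality of the sums of the remaining terms. Therefore it suffices to prove equality for the irreducible pieces and for each pair of mixed strata $\mathcal{X}^{\ast}_{\GL_{[k]}\times\Sp_{2(n-m)}}/\!\!/(\mathbb{Z}_2^{|[k]|}\rtimes S_{[k]})$ versus $\mathcal{X}^{\ast}_{\GL_{[k]}\times\SO_{2(n-m)+1}}/\!\!/(\mathbb{Z}_2^{|[k]|}\rtimes S_{[k]})$, which is the claimed reduction.

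Finally, I would remark that the mixed strata themselves involve the smaller Langlands dual pairs $\Sp_{2(n-m)}$ and $\SO_{2(n-m)+1}$. This suggests an inductive reformulation, but it is not needed for the reduction as stated.
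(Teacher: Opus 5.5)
Your proposal is correct and is essentially the paper's argument: compare the decompositions of Theorems \ref{thm:stratSp} and \ref{thm:stratSOodd}, note that the $\overline{\Omega}_n$-summands coincide, and cancel them, leaving the irreducible pieces (the class $I=\Delta$, $m=0$) and the mixed strata with $1\le m\le n-1$. Your extra check that $\mathbb{Z}_2^{|[k]|}\rtimes S_{[k]}$ acts on $\mathcal{X}^{\ast}_{\GL_{[k]}}(\Gamma)$ in the same way for both groups (inverse transpose on the blocks, up to inner automorphisms of the Levi) is a detail the paper takes for granted when it asserts that the first summands agree.
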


Note that the decomposition in the stratification for $G=\Sp_{2n}$- and $^L G=\SO_{2n+1}$-character varieties involve strata for $\GL_r$ and groups of the same Dynkin type as $G$ and $^L G$ of lower or equal dimension. This reduces the computation in a recursive way to lower rank invariants. Nevertheless, observe that we still need to prove $e\left( \mathcal{X}^{\ast}_{\Sp_{2n}}(\Gamma) \right) =e\left( \mathcal{X}^{\ast}_{\SO_{2n+1}}(\Gamma) \right)$ for irreducible representations. 

To summarize the results in \cite{gonzalez2024rootdata} we can say that to compute the $e$-polynomial $e\left( \mathcal{X}^{\ast}_{G}(\Gamma) \right)$ we need to compute the $e$-polynomial of the irreducible locus $e\left( \mathcal{X}^{\ast}_{G}(\Gamma) \right)$ and, then, use the polystable stratification to reduce the computation of the lower strata to cores, yielding the $e$-polynomials $e\left( \mathcal{X}^{\ast}_{L_{I}}(\Gamma) /\!\!/W_I\right)$. There are two advantages in doing this: we gain recursion by leaving the computation in terms of previously known invariants, plus we substitute more  complicated infinite quotients in cohomology by finite ones. 

We end this subsection with an example particularizing the situation for the Langlands dual groups $\Sp_6$ and $\SO_7$.

\begin{Example}\label{ex:Sp6SO7}
The Dynkin diagram of $G=\Sp_{6}$ is $\dynkin[labels={1,2,3},scale=2]C3$ of type $C_{3}$ with simple roots labeled as $\Delta=\{1,2,3\}$, $n$ being the unique long root. The basis of $\mathbb{C}^6$ that we fix is $( x_1, x_2, x_3, y_1, y_2, y_3)$, with $\omega$ is the standard symplectic form 
$\left(\begin{array}{c|c}
    0 & I_3 \\\hline
    -I_3 & 0
\end{array}\right)$. The full flag in this case is 
\begin{eqnarray*}
    0 \subsetneq V_{1}=\langle x_1\rangle  \subsetneq V_{2}=\langle x_1, x_2\rangle\subsetneq V_{3}=\langle x_1, x_2, x_3\rangle= V_3^{\perp} \subsetneq \\
    V_{2}^\perp=\langle x_1, x_2, x_3, y_3\rangle \subsetneq V_{{1}}^\perp=\langle x_1, x_2, x_3, y_2, y_3\rangle \subsetneq \mathbb{C}^{6}\; . 
\end{eqnarray*}

On the other hand, the Dynkin diagram of $G=\SO_{7}$ is $\dynkin[labels={1,2,3},scale=2]B3$ of type $B_{3}$ with simple roots labeled as $\Delta=\{1,2,3\}$, $n$ being the unique short root. The basis of $\mathbb{C}^7$ that now we fix is $( x_1, x_2, x_3, z, y_1, y_2, y_3)$, with $\omega$ being the quadratic form of expression $x_1y_1+x_2y_2+x_3y_3+z^2$, in coordinates of the basis.  
The full flag in this case is 
\begin{eqnarray*}
    0 \subsetneq V_{1}=\langle x_1\rangle  \subsetneq V_{2}=\langle x_1, x_2\rangle\subsetneq V_{3}=\langle x_1, x_2, x_3\rangle\subsetneq V_3^{\perp}=\langle x_1, x_2, x_3, z\rangle\subsetneq \\
    V_{2}^\perp=\langle x_1, x_2, x_3, z, y_3\rangle \subsetneq V_{{1}}^\perp=\langle x_1, x_2, x_3, z, y_2, y_3\rangle \subsetneq \mathbb{C}^{7}\; .
\end{eqnarray*}

For both cases, given a subset $I \subseteq \Delta$, we remove the terms indexed by elements of $I$ in each full flag and obtain that $P_I$ is conjugated to the stabilizer of that reduced flag, and also that $L_I$ is conjugated to the stabilizer of the graded object of the flag. Let us see this for two particular subsets $I$.

First, suppose $I=\{2\}$. For $G=\Sp_6$ we obtain: 
\[P_I\;\text{stabilizes}\; \quad 0\subsetneq V_1\subsetneq V_3=V_3^{\perp} \subsetneq V_1^{\perp}\subsetneq \mathbb{C}^6\; ,\]
\[L_I\;\text{stabilizes}\;\quad V_1\oplus V_3/V_1\oplus V_1^{\perp}/V_3^{\perp}\oplus \mathbb{C}^6/V_1^{\perp}\; ,\]
where note that $L_I$ contains the action of $\GL_1$ on the first factor and $\GL_2$ on the second factor, and the action on the other two factors is the induced one because of the symplectic form, then $L_I\simeq \GL_1\times \GL_2$. 

Now for $^L G=\SO_7$ we obtain:
\[P_I\;\text{stabilizes}\; \quad 0\subsetneq V_1\subsetneq V_3\subsetneq V_3^{\perp} \subsetneq V_1^{\perp}\subsetneq \mathbb{C}^7\; ,\]
\[L_I\;\text{stabilizes}\;\quad V_1\oplus V_3/V_1\oplus  V_3^{\perp}/V_3\oplus V_1^{\perp}/V_3^{\perp}\oplus \mathbb{C}^7/V_1^{\perp}\; .\]
Now, note that $L_I$ contains again the action of $\GL_1$ on the first factor and $\GL_2$ on the second factor. On the third factor $V_3^{\perp}/V_3$ the action is trivial, and on the remaining two factors the action is the induced one because of the orthogonal form, then $L_I\simeq \GL_1\times \GL_2$. 

Then, using Theorems \ref{thm:moticivGZ}, \ref{thm:stratSp} and \ref{thm:stratSOodd}, we can compute the $e$-polynomial of the strata as 
\[e\left(\mathcal{X}^{I=\{2\}}_{\Sp_6}(\Gamma)\right)=e\left(\mathcal{X}_{\GL_1}^{\ast}\times\mathcal{X}_{\GL_2}^{\ast}/\mathbb{Z}_2^2\right)=e\left(\mathcal{X}^{I=\{2\}}_{\SO_7}(\Gamma)\right)\]
noticing that they are automatically equal because both the Levi $L_I$ and the finite group in the quotient are isomorphic. 

Now suppose $I=\{2,3\}$. For $G=\Sp_6$ we obtain: 
\[P_I\;\text{stabilizes}\; \quad 0\subsetneq V_1\subsetneq V_1^{\perp}\subsetneq \mathbb{C}^6\; ,\]
\[L_I\;\text{stabilizes}\;\quad V_1\oplus V_1^{\perp}/V_q^{\perp}\oplus \mathbb{C}^6/V_1^{\perp}\; .\]
Here, $L_I$ corresponds to the action of $\GL_1$ on the first factor (inducing the same on the third factor) and $\Sp_4$ acts on the second factor, then $L_I\simeq \GL_1\times \Sp_4$. 

For $^L G=\SO_7$ we obtain:
\[P_I\;\text{stabilizes}\; \quad 0\subsetneq V_1\subsetneq V_1^{\perp}\subsetneq\mathbb{C}^7\; ,\]
\[L_I\;\text{stabilizes}\;\quad V_1\oplus  V_1^{\perp}/V_1\oplus \mathbb{C}^7/V_1^{\perp}\; .\]
Now $L_I$ contains the action of $\GL_1$ on the first factor (inducing the same on the third factor) and $\SO_5$ acts on the second factor, then $L_I\simeq \GL_1\times \SO_5$. 

Once more, using Theorems \ref{thm:moticivGZ}, \ref{thm:stratSp} and \ref{thm:stratSOodd}, we can compute the $e$-polynomial of the strata as 
\begin{equation}\label{eq:TMSSpSO}
e\left(\mathcal{X}^{I=\{2,3\}}_{\Sp_6}(\Gamma)\right)=e\left(\mathcal{X}_{\GL_1}^{\ast}\times\mathcal{X}_{\Sp_4}^{\ast}/\mathbb{Z}_2\right)\overset{?}{=}e\left(\mathcal{X}_{\GL_1}^{\ast}\times\mathcal{X}_{\SO_5}^{\ast}/\mathbb{Z}_2\right)=e\left(\mathcal{X}^{I=\{2,3\}}_{\SO_7}(\Gamma)\right).
\end{equation}
Now, however, we cannot assure that these polynomials are equal because the Levi subgroups $\GL_1\times \Sp_4$ and $\GL_1\times \SO_5$ are not isomorphic: $\Sp_4(\mathbb{C})\simeq \Spin_5(\mathbb{C})$ is the double cover of $\SO_5(\mathbb{C})$. 

In the spirit of Corollary \ref{cor:TMSSpSO} it can be checked that showing the topological mirror symmetry conjecture in the form of the equality of $e$-polynomials $e\left(\mathcal{X}_{\Sp_6}(\Gamma)\right)=e\left(\mathcal{X}_{\SO_7}(\Gamma)\right)$ reduces to show the equality (\ref{eq:TMSSpSO}) for the strata $I=\{2,3\}$, the equality for the strata $I=\{1,3\}$ and the equality $e\left(\mathcal{X}^{\ast}_{\Sp_6}(\Gamma)\right)=e\left(\mathcal{X}^{\ast}
_{\SO_7}(\Gamma)\right)$ for the irreducible representations. The only thing to prove for strata $\{1,3\}$ is the equality for character varieties of $\Sp_2\simeq \SL_2$ and $\SO_3\simeq \PGL_2$, which is already proven for $\Gamma$ a surface group \cite{hausel2008mixed} and a free group \cite{florentino2021serre}, the general $\Gamma$ case being open. 
    
\end{Example}

\subsection{Stratification for \texorpdfstring{$\SO_{2n}$}{SO(2n)}, type D}

We conclude by exploring the stratification for $\SO_{2n}$ of Dynkin diagram  
\[\dynkin[labels={1,2,n-3,n-2,n-1,n},label directions={,,,right,,},scale=2]D{}\] and type $D_{n}$. Simple roots are $\Delta=\{1,\ldots,n\}$, with $n-1,n$ being the end roots of the fork. This group is Langlands self-dual, therefore the discussion about topological mirror symmetry does not apply in this case. 

As in the odd orthogonal case, we fix a basis of $\mathbb{C}^{2n}$ such that the quadratic form $Q:\mathbb{C}^{2n}\rightarrow \mathbb{C}$ is 
$Q(v)=x_1y_1+\cdots+x_ny_n$, $(x_1,\ldots, x_n, y_1\ldots, y_n)$ being the coordinates of $v\in\mathbb{C}^{2n}$ in the basis. In this case there is no bijection between parabolic subgroups of $\SO_{2n}$ and isotropic flags, then we need to analyze it more carefully. 

The action of the Weyl group $W =  H_n\rtimes S_n$ on $2^\Delta$, where 
\[H_n=\ker(\epsilon_1,\ldots,\epsilon_n\in\mathbb{Z}_2^n\mapsto\epsilon_1\cdot \cdots \cdot \epsilon_n\in\mathbb{Z}_2)\; ,\]
decomposes 
\[2^\Delta = \Omega^1_n \sqcup \Omega^2_n\sqcup \Omega^3_n\]
into three classes, depending on containing the end roots of the fork or not. Let us discuss the properties for each class: 
\begin{itemize}
\item $I\in \Omega^1_{n}=\{I: \{n-1, n\} \subseteq I\}$, $\Delta \setminus I = \{i_1, \ldots, i_s \le n-2\}$
\begin{itemize}
\item Equivalence classes in $\Omega^1_n / \sim_W$ correspond to partitions $I_{1, [k]}:=[k] \cup \linebreak \{n-d+1, \ldots, n\}$, $[k]= [1^{k_{1}}\cdots j^{k_{j}}\cdots m^{k_{m}}] \in \mathcal{P}_m$, $0 \le m \le n-2$ (where $d = n-m \ge 2$).
\item $P_I$ is the stabilizer of the flag 
\[0 \subsetneq V_{1} \subsetneq \cdots \subsetneq V_{j} \subsetneq \cdots \subsetneq V_{s-1} \subsetneq V_{s} \subsetneq V_{s}^\perp \subsetneq V_{{s-1}}^\perp \subsetneq \cdots \subsetneq V_{1}^\perp \subsetneq \mathbb{C}^{2n}\; ,\]
where $V_j = \langle x_1, \ldots, x_{i_j}\rangle$ and  $V_j^\perp = \langle x_1, \ldots, x_n, y_{i_j+1}, \ldots, y_n\rangle$. 
    \item Levi subgroup is $
    L_{1,[k]} = \prod_{j=1}^m \GL_{j}^{k_j} \times \SO_{2(n-m)}$ and strata are 
    \[\mathcal{X}_{\GL_{[k]} \times \SO_{2(n-m)}}^{\ast} /\!\!/ (\mathbb{Z}_2^{|[k]|} \rtimes S_{[k]})\; .\]
    When $m=0$ ($I=\Delta$), this recovers the open stratum $\mathcal{X}_{\SO_{2n}}^*$.
 \end{itemize}
\item $I\in \Omega^2_{n}=\{I:n\not\in I, n-1\in I\}$, $\Delta \setminus I = \{i_1, \ldots, i_{s-1}<n-1, i_s = n\}$
\begin{itemize}
\item Equivalence classes in $\Omega^2_n / \sim_W$ correspond to partitions $I_{2, [k]}:=[k] \cup \{n-1\}$, $[k]= [1^{k_{1}}\cdots j^{k_{j}}\cdots n^{k_{n}}] \in \mathcal{P}_n$.
\item $P_I$ is the stabilizer of the flag 
\[0 \subsetneq V_{1} \subsetneq \cdots \subsetneq V_{j} \subsetneq \cdots \subsetneq V_{s-1} \subsetneq V_{s} =V_{s}^\perp \subsetneq V_{{s-1}}^\perp \subsetneq \cdots \subsetneq V_{1}^\perp \subsetneq \mathbb{C}^{2n}\; ,\]
where $V_j = \langle x_1, \ldots, x_{i_j}\rangle$ and $V_s = \langle x_1, \ldots, x_n\rangle$.
    \item Levi subgroup is $
    L_{2,[k]} = \prod_{j=1}^n \GL_{j}^{k_j}$ and strata are 
   \[\mathcal{X}_{\GL_{[k]}}^{\ast} /\!\!/ (H_{[k]}\rtimes S_{[k]})\; .\]
 \end{itemize}
\item $I\in \Omega^3_{n}=\{I:n-1\not\in I, n\in I \text{ or } \{n-1,n\}\cap I = \emptyset\}$, $\Delta \setminus I$ omitting at least $n-1$.
\begin{itemize}
\item Equivalence classes in $\Omega^3_n / \sim_W$ correspond to partitions $I_{3, [k]}:=[k] \cup \{n\}$ (or $[k]$), $[k] \in \mathcal{P}_n$.
\item $P_I$ is the stabilizer of the flag 
\[0 \subsetneq V_{1} \subsetneq \cdots \subsetneq V_{j} \subsetneq \cdots \subsetneq V_{s-1} \subsetneq V_{s}' =V_{s}^{'\perp} \subsetneq V_{{s-1}}^\perp \subsetneq \cdots \subsetneq V_{1}^\perp \subsetneq \mathbb{C}^{2n}\; ,\]
where $V_s' = \langle x_1, \ldots, x_{n-1}, y_n\rangle$.
    \item Levi subgroup is $
    L_{3,[k]} = \prod_{j=1}^n \GL_{j}^{k_j}$ and strata are 
   \[\mathcal{X}_{\GL_{[k]}}^{\ast} /\!\!/ (H_{[k]}\rtimes S_{[k]})\; .\]
 \end{itemize}
\end{itemize}

Whether $I_{2,[k]}$ and $I_{3,[k]}$ belong to the same $W$-orbit depends on the parity of $[k]$: if $[k] \in \mathcal{P}_n^{\text{even}}$ (all parts even), swapping $n-1$ and $n$ requires an outer transformation in $\mathrm{O}_{2n} \setminus \SO_{2n}$, giving two distinct orbits under $W$; if $[k] \in \mathcal{P}_n^{\text{odd}}$ (some part odd), the two classes merge under $W$. Note that if $n$ is odd, $\mathcal{P}_n^{\text{even}} = \emptyset$.

As before, we can decompose motivically the $\SO_{2n}$-character variety.

\begin{Theorem}\cite{gonzalez2024rootdata}\label{thm:stratSOeven}
The parabolic stratification of the $\SO_{2n}$-character variety of a group $\Gamma$
yields the following decomposition of the $e$-polynomial:
\[e\left( \mathcal{X}_{\SO_{2n}}(\Gamma) \right) = \sum_{m = 0}^{n-2}\sum_{[k]\in\mathcal{P}_{m}} e\left(\mathcal{X}_{\GL_{[k]}\times \SO_{2(n-m)}}^{\ast}(\Gamma)/\!\!/ (\mathbb{Z}_2^{|[k]|} \rtimes S_{[k]}) \right) + \]
\[\sum_{[k]\in\mathcal{P}_{n}^{\textup{odd}}} e\left(\mathcal{X}_{\GL_{[k]}}^{\ast}(\Gamma) /\!\!/ (H_{[k]} \rtimes S_{[k]}) \right)
+ 2 \sum_{[k] \in \mathcal{P}_n^{\textup{even}}} e\left(\mathcal{X}_{\GL_{[k]}}^{\ast}(\Gamma) /\!\!/ (H_{[k]} \rtimes S_{[k]}) \right).\]
\end{Theorem}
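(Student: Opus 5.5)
The plan is to specialize Theorem \ref{thm:moticivGZ} to $G=\SO_{2n}$: since $e(\mathcal{X}_{\SO_{2n}}(\Gamma))=\sum_{I\in 2^\Delta/\sim_W} e(\mathcal{R}^{\ast}_{L_I}(\Gamma)/\!\!/N_I)$, the proof reduces to three tasks. First, enumerate the $W$-orbits on $2^\Delta$. Second, identify $L_I$ and $W_I=N_I/L_I$ for a representative of each orbit. Third, check that $1\to L_I\to N_I\to W_I\to 1$ splits, so each summand becomes $e(\mathcal{X}^{\ast}_{L_I}(\Gamma)/\!\!/W_I)$.

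For the orbits, I would use the decomposition $2^\Delta=\Omega^1_n\sqcup\Omega^2_n\sqcup\Omega^3_n$ described above. A subset $I$ determines the Levi $L_I$, and its $W$-class is determined by the $W$-conjugacy class of $T_I$, that is, of the decomposition of the standard representation $\mathbb{C}^{2n}$. In $\Omega^1_n$ the Levi is $\prod_j\GL_j^{k_j}\times\SO_{2(n-m)}$ with $[k]\in\mathcal{P}_m$ and $n-m\geq 2$. The block sizes $[k]$ are a complete $W$-invariant, because $H_n\rtimes S_n$ can permute equal blocks and flip pairs of signs. The $\SO_{2d}$ factor with $d\geq2$ absorbs any leftover sign, so the full $\mathbb{Z}_2^{|[k]|}$ is realized inside $N_I$.

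For $\Omega^2_n\cup\Omega^3_n$ the Levi is $\prod_j\GL_j^{k_j}$ with $[k]\in\mathcal{P}_n$, and the two Lagrangians $\langle x_1,\dots,x_n\rangle$ and $\langle x_1,\dots,x_{n-1},y_n\rangle$ lie in the two distinct $\SO_{2n}$-families. Changing family amounts to inverting the blocks, an element $(\epsilon_i)\in\mathbb{Z}_2^{|[k]|}$ that acts on $\mathbb{Z}_2^n$ with sign $\prod_i\epsilon_i^{\lambda_i}$, where $\lambda_i$ is the size of block $i$. If some part is odd, this can be done inside $H_n$, so $I_{2,[k]}\sim_W I_{3,[k]}$. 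If all parts are even, it cannot, which gives two orbits. This accounts for the factor $2$ on $\mathcal{P}_n^{\mathrm{even}}$.

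Next I would compute $W_I$ in each case. It is generated by the permutations of equal blocks, $S_{[k]}$, together with the block inversions $A\mapsto (A^{t})^{-1}$ that lie in $\SO_{2n}$. In $\Omega^1_n$ all inversions are allowed, compensated by a reflection inside the $\SO_{2d}$ factor, giving $\mathbb{Z}_2^{|[k]|}\rtimes S_{[k]}$. In $\Omega^{2,3}_n$ only the inversions with $\prod_i\epsilon_i^{\lambda_i}=1$ are allowed, which defines $H_{[k]}$, giving $H_{[k]}\rtimes S_{[k]}$. For the splitting, I would write explicit lifts: permutation matrices of blocks, and the matrices that swap $x$- and $y$-coordinates within a block, corrected by a fixed reflection in the $\SO_{2d}$ factor when needed. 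I would check that these lifts generate a finite subgroup mapping isomorphically onto $W_I$.

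The main obstacle is this splitting, together with the parity bookkeeping in $\Omega^{2,3}_n$. A single coordinate swap $x_i\leftrightarrow y_i$ has determinant $-1$, so the lifts must be chosen carefully: I would pair the swaps, or compose them with $-1$ on a coordinate pair, and then verify that the relations of $H_{[k]}\rtimes S_{[k]}$ still hold. Once the splitting is in place, summing the orbit contributions gives exactly the stated formula, with the $m=0$ term recovering the irreducible stratum.
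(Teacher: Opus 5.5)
Your proposal is correct and follows essentially the paper's route: specialize Theorem~\ref{thm:moticivGZ} to $\SO_{2n}$ using the orbit decomposition $2^\Delta=\Omega^1_n\sqcup\Omega^2_n\sqcup\Omega^3_n$, the Levis $L_I$ and groups $W_I$, and the parity argument on the two families of Lagrangians that produces the factor $2$ on $\mathcal{P}_n^{\mathrm{even}}$, while making explicit what the paper leaves implicit (notably $H_{[k]}=\{\epsilon:\prod_i\epsilon_i^{\lambda_i}=1\}$, which is all of $\mathbb{Z}_2^{|[k]|}$ when every part is even). One small slip: composing a swap $x_i\leftrightarrow y_i$ with $-1$ on the pair $\langle x_i,y_i\rangle$ does not change its determinant, so only your other corrections work (products of block swaps whose sizes sum to an even number, or a fixed reflection of the $2(n-m)$-dimensional summand), and the splitting is not really the obstacle you fear, since $\mathcal{R}^{\ast}_{L_I}(\Gamma)/\!\!/N_I\cong\mathcal{X}^{\ast}_{L_I}(\Gamma)/\!\!/(N_I/L_I)$ holds whenever $L_I$ is normal in $N_I$, whether or not the sequence splits.
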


%\bibliographystyle{alpha} %
%\bibliography{referencesAZ}
\printbibliography
\end{document}